\numberwithin{equation}{section}
\theoremstyle{plain}
\newtheorem{theorem}{Theorem}[section]
\newtheorem{lemma}[theorem]{Lemma}
\newtheorem{proposition}[theorem]{Proposition}
\theoremstyle{definition}
\newtheorem{example}[theorem]{Example}
\newtheorem{assumption}[theorem]{Assumption}
\newtheorem{remark}[theorem]{Remark}
\numberwithin{equation}{section}
\numberwithin{figure}{section}
\DeclareMathOperator{\dist}{dist}
\newcommand{\R}{\mathbb{R}}
\newcommand{\N}{\mathbb{N}}
\newcommand{\txtr}[1]{{#1}} 
\newcommand{\partition}{\mathcal P}
\newcommand{\partitionsetk}{\mathfrak P_k}
\newcommand{\partitionset}[1]{\mathfrak P_{#1}}
\newcommand{\optenergy}[2]{\mathcal L_{#1,#2}}
\newcommand{\energy}[2]{\Lambda_{#1,#2}}
\newcommand{\energyrel}[2]{\widetilde\Lambda_{#1,#2}}
\newcommand{\optenergyrel}[2]{\widetilde{\mathcal L}_{#1,#2}}
\newcommand{\threshold}[2]{\mathcal{T}_{#1,#2}}
\newcommand{\thresholdrel}[2]{\widetilde{\mathcal{T}}_{#1,#2}}
\newcommand{\infess}{\Sigma}
\newcommand{\infspec}{\lambda}
\title{Spectral minimal partitions of unbounded domains}
\author{Matthias Hofmann}
\author{James B.\ Kennedy}
\author{Hugo Tavares}
\address{Matthias Hofmann\\Fakult\"at f\"ur Mathematik und Informatik, FernUniversit\"at in Hagen, 58084 Hagen, Germany}
\email{matthias.hofmann@fernuni-hagen.de}
\address{James B. Kennedy\\ Departamento de Matem\'atica, Universidade de Aveiro, 3810-193 Aveiro, Portugal}
\email{jbkennedy@ua.pt}
\address{Hugo Tavares\\CAMGSD -- Centro de An\'alise Matem\'atica, Geometria e Sistemas Din\^amicos, Departamento de Matem\'atica, Instituto Superior T\'ecnico, Av.\ Rovisco Pais, 1049-001 Lisboa, Portugal} 
\email{hugo.n.tavares@tecnico.ulisboa.pt}
\subjclass[2020]{35J10 (primary); 35B65, 35J20, 49Q10, 81Q10 (secondary).}
\keywords{Spectral minimal partition, Schr\"odinger operator, essential spectrum, unbounded domains}
\thanks{The authors would like to thank the referee for his/her useful comments on a previous version of this paper. This work was supported by the Funda\c{c}\~ao para a Ci\^encia e a Tecnologia (FCT), Portugal, within the scope of the projects SpectralOPs - Spectral Optimal Partitions: geometric and numerical analysis, reference \href{https://doi.org/10.54499/2023.13921.PEX                                            }{2023.13921.PEX} (all authors), project SHADE - Sharp inequalities and critical problems in harmonic analysis, dispersive and elliptic equations, reference \href{https://doi.org/10.54499/2023.17881.ICDT}{2023.17881.ICDT}  (H.T.), and via the research centers GFM - projects \href{https://doi.org/10.54499/UID/00208/2025}{UID/00208/2025} and \href{https://doi.org/10.54499/UID/PRR/00208/2025}{UID/PRR/00208/2025} (J.B.K.) and CIDMA - projects \href{https://doi.org/10.54499/UID/04106/2025}{UID/04106/2025} and \href{https://doi.org/10.54499/UID/PRR/04106/2025}{UID/PRR/04106/2025} (J.B.K.) and CAMGSD - project \href{https://doi.org/10.54499/UID/04459/2025}{UID/04459/2025} - IST-ID (H.T.) under the FCT Multi-Annual Financing Program for R{\&}D Units. 
}
\begin{document}

\begin{abstract}
We study the problem of constructing $k$-spectral minimal partitions of domains in $d$ dimensions, where the energy functional to be minimized is a $p$-norm ($1 \le p \le \infty$) of the infimum of the spectrum of a suitable Schr\"odinger operator $-\Delta +V$, with Dirichlet conditions on the boundary of the partition elements (cells).  The main novelty of this paper is that the domains may be unbounded, including of infinite volume.  

First, we prove a sharp upper bound for the infimal energy among all $k$-partitions by a threshold value which involves the infimum $\Sigma$ of the essential spectrum of the Schr\"odinger operator on the whole domain as well as the infimal energy among all $k-1$-partitions. Strictly below such threshold, we develop a concentration-compactness-type argument showing optimal partitions exist, and each cell admits ground states (i.e., the infimum of the spectrum on each cell is a simple isolated eigenvalue).

Second, for $p<\infty$, when the energy and the threshold level coincide, we show there may or may not be minimizing partitions. Moreover, even when these exist, they may not have ground states.

Third, for $p=\infty$, minimal partitions always exist, even at the threshold level, but these may or may not admit ground states. Moreover, below the threshold, we can always construct a minimizer, which is an equipartition. At the threshold value we show that spectral minimal partitions may not need to be equipartitions.

We give a variety of examples of both domains and potentials to illustrate the new phenomena that occur in this setting.
\end{abstract}

\maketitle

\section{Introduction}
\label{sec:intro}

By now, there is a well-developed theory of so-called \emph{spectral minimal partitions} (SMPs) of \emph{bounded} Euclidean domains. Prototypically, given a bounded domain $\Omega \subset \R^d$, $d \geq 2$, and a \emph{$k$-partition} of $\Omega$, that is, a collection $\partition = (\omega_1, \ldots, \omega_k)$ of $k$ open and pairwise disjoint subsets $\omega_1,\ldots,\omega_k$ of $\Omega$, on each $\omega_i$ one considers the first eigenvalue $\lambda_1 (\omega_i)$ of the Dirichlet Laplacian, and constructs an \emph{energy functional} of the partition as some $p$-norm ($1 \le p \le \infty$) of the mentioned eigenvalues, namely
\begin{equation}\label{eq:partition_bounded}
    \partition\mapsto \left(\sum_{i=1}^{k} \lambda_1 (\omega_i)^p\right)^{1/p} \quad (1\leq p<\infty),\qquad \partition\mapsto  \max_{i=1,\ldots,k} \{\lambda_1 (\omega_i)\} \quad (p=\infty).
\end{equation}
SMPs are then any partitions minimizing these kinds of energy functional.

Since, very roughly, $\lambda_1 (\omega_i)$ is smaller for larger, ``rounder'' $\omega_i$ (see \cite{BrascoDePhilippisVelichov_FKQ}), such an SMP represents an analytically optimal partition of $\Omega$ into ``large, round'' pieces, or cells, in some sense. However, while a discrete analogue of this principle involving graph Laplacians is one method used to partition discrete graphs into ``clusters'' (\cite{OstWhiOud14}), the main reasons for the interest in such SMPs over the last couple of decades is arguably quite different: first and foremost, due to their connections to the Dirichlet Laplacian on the whole domain, in particular as regards the \emph{nodal partitions}, that is, partitions of $\Omega$ into the nodal domains of its Laplacian eigenfunctions. See \cite{BNHe17} for a detailed discussion, and check the references therein. However, they are also of interest for their links to harmonic maps with values in singular spaces \cite{CaffarelliLin,CaffarelliLinStrat,OgnibeneVelichkov2, TavaresTerracini}; moreover, these or related optimal partition problems are also connected with systems of elliptic equations, in particular as they correspond to the limits of singularly perturbed elliptic systems of competing species \cite{CLLL,ContiTerraciniVerziniNehari,CoTeVe05,DancerWangZhang,DancerWangZhangAddendum,NTTV1,RamosTavaresTerracini,STTZsurvey,SoaveZilio,SoaveZilio2,TavaresTerracini2}. These SMPs of bounded Euclidean domains have been completely characterized, both in terms of existence and in terms of regularity of the optimizing partitions. While an existence proof in the framework of quasi-open sets follows from the general theory in \cite{BucurButtazzoHenrot}, the existence of open partitions and optimal regularity of associated eigenfunctions was proved in \cite{CoTeVe05,HHT09}. Combining \cite{CaffarelliLin2,CoTeVe05,HHT09,RamosTavaresTerracini,TavaresTerracini}, one deduces the regularity of the inner free boundary $\Omega \cap \left(\cup_{i}\partial \omega_i\right)$ of any given optimal partition $(\omega_1,\ldots, \omega_k) $: up to a negligible singular set, it is a collection of $C^{1,\alpha}$ hypersurfaces. For the special case $p=1$, a more detailed description of the singular set \cite{Alper20,OgnibeneVelichkov2} or results on the interaction between  the inner free boundary and  $\partial \Omega \cap \left(\cup_{i}\partial \omega_i\right)$ (see \cite{OgnibeneVelichkov1})  are available.

\smallbreak

Here, our goal is to analyze, for the first time, such problems in a different context: \emph{unbounded} Euclidean domains, possibly of infinite volume, where in \eqref{eq:partition_bounded} we replace the first eigenvalue with the infimum of the spectrum of general classes of Schr\"odinger operators $-\Delta + V$ with Dirichlet boundary conditions.

Before going into details, we remark that the unbounded setting presents some fundamental differences, as the spectrum of the operators on the domain $\Omega$ can have a far more complicated nature, and so a number of new phenomena emerge; in particular, there might not even be a first Dirichlet eigenvalue. The actual existence of optimizers is not always true; even if optimizers exist, the cells of the optimal partitions may or may not actually have eigenvalues (ground states). We will present a threshold inequality for the energies related to the bottom of the essential spectrum of the operator on $\Omega$ (these quantities will be defined precisely below). Whenever that inequality is strict, optimal partitions exist, and the infimum of the spectrum of each cell is a simple isolated eigenvalue. As we will discuss below, this finds a parallel with the study of elliptic problems with critical nonlinearities.

In the process, we will introduce a natural weak formulation of the problem that involves functions rather than sets. However, unlike in the bounded domain case where the strong (i.e. \eqref{eq:partition_bounded}, with sets) and the weak formulations are equivalent, in our more general setting this is only necessarily true below the threshold level. Another remarkable difference in our setting occurs in the case $p=\infty$: it is no longer true in general that any optimal partition is an equipartition. 

We will illustrate and complement these general theorems with a number of examples in the last section. 

\smallbreak

Before continuing, we mention three papers which deal with shape optimization problems in unbounded domains, although of a different nature: the paper \cite{Buttazzo}, which deals with an optimization problem with an integral cost depending a potential $V$ varying in a certain class, with the goal of optimizing in $V$; papers \cite{Pacella2,Pacella_cylinder},which deal with 1-phase shape optimization problems related to overdetermined problems.

\subsection{Assumptions and notation}

In order to go into detail, we need to introduce some notation. As mentioned above, instead of considering just the Laplacian, we will assume there is a fixed underlying potential $V$ defined on $\Omega$ which will enter into the partition problem. From the theoretical point of view this makes no difference, but in practice allows for richer behavior and more examples. We will thus take the following assumption throughout the paper.

\begin{assumption}
\label{ass:basic}
Given $d\ge 2$, the set $\Omega \subset \R^d$ is open, and $V: \Omega \to \R$ is a fixed, measurable function such that $V\in L^\infty_{\text{loc}}(\Omega)$ and $V\ge 0$ a.e.\ in $\Omega$.
\end{assumption}
See Section~\ref{sec:omegavee} for a discussion of this assumption and, in particular, how positivity of $V$ can be weakened.

The Schr\"odinger operator $-\Delta + V$ (with Dirichlet boundary conditions) will thus take the place of the Dirichlet Laplacian. Now, since $\Omega$ might be unbounded, it can have subsets whose spectrum may no longer be discrete, but rather there may be essential spectrum (again, see below); in particular, in place of the first eigenvalue in \eqref{eq:partition_bounded}, we now consider the infimum of the spectrum of the Schr\"odinger operator on an arbitrary open set $\omega \subset \Omega$, which is characterized by
\begin{equation}
\label{eq:inf-spec}
    \infspec(\omega) := \inf_{u\in H^1_{0,V}(\omega)\setminus\{0\}} \frac{\int_\omega |\nabla u|^2 + V(x)|u|^2\, \mathrm dx}{\int_\omega |u|^2\, \mathrm dx},
\end{equation}
on the Sobolev space
\[
    H^1_{0,V}(\omega)=\left\{u\in H^1_0(\omega):\ \int_\omega V(x) |u|^2\, dx<\infty\right\}.
\]
Using a cutoff-function argument, the fact that $V\in L^\infty_{\text{loc}}$, and the definition of $H^1_0(\omega)$, it is not hard to check that $H^1_{0,V}(\omega)$ is the closure of $C^\infty_c(\Omega)$ for the norm $u\mapsto \|u\|_{H^1(\omega)}+\|\sqrt{V}u\|_{L^2(\omega)}$. Obviously $\infspec(\omega)$ depends on both the domain $\omega$ and the potential $V$; however, for simplicity of notation, and since we always consider the potential to be fixed, we will always write $\infspec (\omega)$ in place of, say, $\infspec (\omega, V|_\omega)$.

For any $k\in \N$ and $p\in [1,\infty]$, and a partition $\partition = (\omega_1, \ldots, \omega_k)$ into $k$ open, pairwise disjoint sets, or \emph{cells}, $\omega_i \subset \Omega$, we can then naturally define a corresponding energy functional $\energy{k}{p}$ as 
\begin{equation}
\label{eq:partition-functional}
    \energy{k}{p} (\partition) = \begin{cases}
        \left ( \sum_{i=1}^k \infspec(\omega_i)^p \right )^{1/p}, &\quad 1\le p < \infty, \\
    \max_{i=1, \ldots, k} \{ \infspec(\omega_i) \}, &\quad p =\infty.
    \end{cases}
\end{equation}
The problem of interest is thus the associated partition problem
\begin{equation}
\label{eq:partitionproblem}
\optenergy{k}{p}(\Omega) = \inf_{\partition= (\omega_1, \ldots, \omega_k)\in \partitionsetk} \energy{k}{p}(\partition),
\end{equation}
where the infimum is sought among all such admissible $k$-partitions of $\Omega$, the set of which we will denote by
\[
\partitionsetk=\{(\omega_1,\ldots, \omega_k):\ \omega_i \subset \Omega \text{ open, nonempty for all $i$},\  \omega_i\cap \omega_j=\emptyset \text{ for all $i\neq j$}\}.
\]
Observe that we do not impose connectedness of the partition elements. In the bounded case, connectedness is also not usually assumed, but the minimizers turn out to be connected anyway. For unbounded $\Omega$ it is possible to find minimizing partitions with disconnected cells;  in fact, unlike in the bounded case, it is also possible that minimizing partitions do not exhaust the whole domain. Examples of both phenomena can be found in Section \ref{sec:examples} below.

As mentioned above, and as in the case of bounded domains (see for instance \cite{CaffarelliLin2,CoTeVe05,RamosTavaresTerracini}), in order to study this partition problem we will study a corresponding \emph{relaxed problem}, where we work with $k$-tuples of functions defined on $\Omega$, rather than subsets of $\Omega$. In place of the infimum of the spectrum we introduce the Rayleigh quotient associated with the problem \eqref{eq:inf-spec}, that is, 
\begin{equation}
\label{eq:rayleigh-quotient}
\mathcal{R}_V(u):=\frac{\int_\Omega (|\nabla u|^2 + V(x)|u|^2)}{\int_\Omega |u|^2} \quad \text{ for } u\in H^1_{0,V}(\Omega)\setminus \{0\},
\end{equation}
so that, in particular, the infimum of the spectrum of $-\Delta+V$ on $\omega$ is given by
\[
\infspec (\omega) = \inf_{u \in H^1_{0,V}(\omega)\setminus \{0\}} \mathcal{R}_V(u).
\]
In this context, we will also need the associated quadratic form,
\begin{equation}
\label{eq:form}
a_V(u):=\int_\Omega (|\nabla u|^2+ V(x)|u|^2), \qquad u\in H^1_{0,V}(\Omega),
\end{equation}
where we may assume without loss of generality that all functions are real valued, so that $\mathcal{R}_V(u)=a_V(u)/\|u\|^2_2$ for any $u \not\equiv 0$.

With that background, we may introduce the relaxed version of the functional $\energy{k}{p}$, $k \geq 1$, $p \in [1,\infty]$, as
\begin{equation}
\label{eq:relaxed-functional}
\energyrel{k}{p}(u_1,\ldots, u_k)= \begin{cases}
        \left ( \sum_{i=1}^k \mathcal{R}_V(u_i)^p \right )^{1/p}, &\quad 1\le p < \infty, \\
    \max_{i=1, \ldots, k} \{ \mathcal{R}_V(u_i) \}, &\quad p =\infty, 
    \end{cases}
\end{equation}
and the relaxed problem corresponding to \eqref{eq:partitionproblem} then becomes
\begin{equation}
\label{eq:relaxed-partition-problem}
\optenergyrel{k}{p}(\Omega) = \inf_{\substack{u_1, \ldots, u_k\in H_{0,V}^1(\Omega)\setminus \{0\}\\ u_i \cdot u_j =0 \text{ a.e.},\ i \neq j}} \energyrel{k}{p}(u_1, \ldots, u_k).
\end{equation}

It is clear from the definitions and the assumption $V \ge 0$ that
\begin{equation}
\label{eq:inequality_relaxed}
0 \le \optenergyrel{k}{p}(\Omega)\le \optenergy{k}{p}(\Omega) < \infty.
\end{equation}
for all $k\in \N$ and $ p\in [1,\infty]$. It will follow from our main results that there is in fact always equality between the two infima. However, as we will see, $\optenergy{k}{p}(\Omega)$ may admit a minimizing partition when $\optenergyrel{k}{p}(\Omega)$ does not admit a minimizing $k$-tuple of functions.

Also relevant for our discussion of spectral minimal partitions will be the infimum of the essential spectrum of the corresponding Schr\"odinger operator on $\Omega$, which can be characterized via Persson's theorem (see \cite{HKS23,Pe60} or \cite[Theorem~2.6]{Si85}). Namely, the infimum of the essential spectrum of $-\Delta+V$ on $\omega\subseteq \Omega$ (or, for short, the infimum of the essential spectrum of $\omega$) is given by
\begin{align}
\label{eq:persson}
\Sigma(\omega)&=\sup_{\mathcal{K}\Subset \overline \omega}\lambda(\omega\setminus \mathcal{K})=\sup_{\mathcal{K}\Subset \overline \omega} 
\mathop{  \inf_{u\in H^1_{0,V}(\omega)\setminus\{0\}}}_{(\text{supp } u)\cap \mathcal{K}=\emptyset} \frac{\int_\omega (|\nabla u|^2 + V(x)|u|^2)}{\int_\omega |u|^2}
\end{align}
(see also \eqref{eq:from-persson-with-love} in Lemma~\ref{lem:domain-monotonicity}). We recommend \cite{HiSi96} as a general reference to the spectral theory of Schr\"odinger operators.

\begin{remark}
\label{rem:infess}
Note that any part of the spectrum below $\infess$ consists of isolated eigenvalues for which all the usual variational principles hold. What $\infess$ represents is the sharp energy level for the Rayleigh quotient below which \txtr{nonvanishing holds}: if for a sequence of $L^2$-normalized functions $u_n \in H^1_{0,V} (\Omega)$ one has $\sup_n \mathcal{R}_V(u_n) < \infess (\Omega)$, then $u_n\not\rightharpoonup 0$ in $H^1_{0,V}(\Omega)$ \txtr{(this is classical and will be shown in Step 4 of the proof of Theorem~\ref{thm:existence})}. Indeed, for this reason, this kind of quantity for more general functionals frequently appears in the calculus of variations literature; the link between the spectral-theoretic viewpoint and the calculus of variations viewpoint was explored in \cite{Ho19}. We point out that, when $\Omega$ is bounded or $V(x)\to \infty$ as $|x|\to \infty$, then $\Sigma (\Omega)=\infty$, see Remark \ref{rem:assumptions_V} for more details.
\end{remark}

\subsection{Statement of main results}

Our first result is as follows.

\begin{theorem}[Energy threshold]\label{thm:firstmain}
For any $k\in \N$ and any $p\in [1,\infty)$, we have
\begin{equation}
\label{eq:optimal-inequality-p}
\optenergy{k}{p}(\Omega) \le \min_{\ell=0,\ldots, k-1} \left(\optenergy{\ell}{p}(\Omega)^p + (k-\ell) \infess(\Omega)^p\right)^{1/p}=\left(\optenergy{k-1}{p}(\Omega)^p +  \infess(\Omega)^p \right)^{1/p}\le k^{1/p}\infess (\Omega)
\end{equation}
(where, by convention, for $k=0$ we set $\optenergy{0}{p}(\Omega) := 0$). For $p=\infty$, there exists a (not necessarily connected) $k$-partition $\partition$ such that
\begin{equation}
\label{eq:optimal-inequality-infty}
\optenergy{k}{\infty} (\Omega) \le \energy{k}{\infty}(\partition) \leq \infess(\Omega).
\end{equation}
\end{theorem}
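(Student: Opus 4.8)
The plan is to deduce both assertions from a single ``building block'' extracted from Persson's characterisation \eqref{eq:persson}. If $\infess(\Omega)=\infty$ there is nothing to prove, since $k$ pairwise disjoint open balls form an admissible $k$-partition and its energy is finite by \eqref{eq:inequality_relaxed}; so assume $\infess(\Omega)<\infty$. The building block is: for every compact $\mathcal K\Subset\overline\Omega$, Persson's formula \eqref{eq:persson} gives $\infspec(\Omega\setminus\mathcal K)\le\infess(\Omega)<\infty$, whence $\Omega\setminus\mathcal K\neq\emptyset$, and — since $C^\infty_c(\Omega\setminus\mathcal K)$ is dense in $H^1_{0,V}(\Omega\setminus\mathcal K)$ in a norm controlling the Rayleigh quotient near an $L^2$-normalised near-minimiser — for every $\delta>0$ there is $v\in C^\infty_c(\Omega\setminus\mathcal K)$ with $\mathcal R_V(v)\le\infess(\Omega)+\delta$. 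Iterating this, at each step enlarging $\mathcal K$ by the (compact) supports of the functions already chosen, produces for any $j\in\N$ and $\delta>0$ functions $v_1,\dots,v_j\in C^\infty_c(\Omega)$ with pairwise disjoint compact supports all lying in a prescribed $\Omega\setminus\mathcal K$ and with $\mathcal R_V(v_i)\le\infess(\Omega)+\delta$.

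To prove \eqref{eq:optimal-inequality-p} I would fix $\ell\in\{0,\dots,k-1\}$ and $\varepsilon>0$, take a near-optimal $\ell$-partition $(\omega_1,\dots,\omega_\ell)$, approximate each $\infspec(\omega_i)$ by a function in $C^\infty_c(\omega_i)$, and intersect each $\omega_i$ with a ball $B_R$ large enough to contain all these supports; this ``compactifies'' the partition at the cost of only $O(\varepsilon)$ in the energy, so we may assume the cells lie in $B_R$. Applying the building block with $\mathcal K=\overline{B_R}\cap\overline\Omega$ then gives $k-\ell$ functions with pairwise disjoint supports in $\Omega\setminus\overline{B_R}$ and Rayleigh quotients $\le\infess(\Omega)+\varepsilon$; enclosing these supports in pairwise disjoint open sets $\omega_{\ell+1},\dots,\omega_k\subset\Omega\setminus\overline{B_R}$ yields an admissible $k$-partition whose $p$-th power energy is $\le\optenergy{\ell}{p}(\Omega)^p+(k-\ell)(\infess(\Omega)+\varepsilon)^p+O(\varepsilon)$. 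Letting $\varepsilon\downarrow0$ gives $\optenergy{k}{p}(\Omega)^p\le\optenergy{\ell}{p}(\Omega)^p+(k-\ell)\infess(\Omega)^p$ for every $\ell\le k-1$; the choice $\ell=0$ is the rightmost bound in \eqref{eq:optimal-inequality-p}, and since the inequality just proven shows $\ell\mapsto\optenergy{\ell}{p}(\Omega)^p+(k-\ell)\infess(\Omega)^p$ is non-increasing on $\{0,\dots,k-1\}$, its minimum is attained at $\ell=k-1$, which (applying $x\mapsto x^{1/p}$) is the claimed identity.

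For \eqref{eq:optimal-inequality-infty} I need the sharp value $\infess(\Omega)$, with no loss $\varepsilon$, realised by an honest partition; the point is that $\infspec(\omega_i)$ is itself an infimum, so a cell escaping to infinity through infinitely many disjoint regions can attain exactly $\infess(\Omega)$. Concretely, I would run the iteration over the index set $\{1,\dots,k\}\times\N$ enumerated by a bijection $m\mapsto(i(m),n(m))$, choosing at step $m$ a function $v^{(m)}$ with $\mathcal R_V(v^{(m)})\le\infess(\Omega)+1/n(m)$ whose support avoids $\mathcal K_m\cup\supp v^{(1)}\cup\dots\cup\supp v^{(m-1)}$, where $(\mathcal K_m)$ is a fixed exhaustion of $\overline\Omega$ by compacts. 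Including the $\mathcal K_m$ makes the family $\{\supp v^{(m)}\}_{m}$ of pairwise disjoint compacts locally finite in $\Omega$ (each point of $\Omega$ lies in some $\interior\mathcal K_N$, which can meet $\supp v^{(m)}$ only for $m<N$), and using local finiteness one can enclose these supports in pairwise disjoint open sets $\omega_1,\dots,\omega_k\subset\Omega$ with $\omega_i$ containing $\supp v^{(m)}$ for every $m$ with $i(m)=i$ (e.g.\ in a second pass, taking neighbourhoods of radius less than a third of the distance from each support to all the others and to $\partial\Omega$). Since the enumeration is a bijection, for each fixed $i$ the functions $v^{(m)}$ with $i(m)=i$ lie in $H^1_{0,V}(\omega_i)$ and have $\mathcal R_V(v^{(m)})\le\infess(\Omega)+1/n$ with $n$ running through all of $\N$; hence $\infspec(\omega_i)\le\infess(\Omega)$ for every $i$, i.e.\ $\energy{k}{\infty}(\omega_1,\dots,\omega_k)\le\infess(\Omega)$.

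I expect the soft parts above — density, the truncation, the rearrangement of the minimum — to be routine, and the genuine obstacle to be the $p=\infty$ construction: arranging that each cell collects infinitely many ``escaping'' pieces so that the infimum defining $\infspec(\omega_i)$ actually hits $\infess(\Omega)$ rather than merely $\infess(\Omega)+\varepsilon$, together with the bookkeeping (bijective enumeration, exhausting compacts, second-pass choice of neighbourhoods) needed to keep the resulting countable family of supports separated by pairwise disjoint open cells inside $\Omega$. Everything else follows from Persson's theorem and a cut-off argument.
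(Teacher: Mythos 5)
Your proposal is correct and follows essentially the same strategy as the paper's proof: Persson's characterization supplies far-out test regions with energy arbitrarily close to $\infess(\Omega)$; for $p<\infty$ one truncates a near-optimal $\ell$-partition to a large ball and appends cells living outside it, and for $p=\infty$ each cell is given infinitely many disjoint escaping pieces so that its spectral infimum reaches $\infess(\Omega)$ exactly. The only differences are cosmetic: the paper realizes the escaping pieces concretely as concentric annuli $\Omega\cap A_{r_n,R_n}$ assigned cyclically to the $k$ cells (which makes the disjointness and the bookkeeping automatic), whereas you work with abstract compactly supported test functions and then enclose their locally finite family of supports in disjoint open neighbourhoods, and the paper proves the key inequality only for $\ell=k-1$ before deducing the minimum formula, while you prove it for every $\ell$ directly.
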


 Observe that the second inequality in \eqref{eq:optimal-inequality-p} is, in general, strict. See Remark \ref{rem:bounds} below. On the other hand, it is straightforward to check that $\optenergy{k}{p}(\Omega)\geq k^{1/p}\lambda(\Omega)$.

We will refer to the quantities appearing in these bounds, that is,
\begin{equation}
\label{eq:threshold-values}
\threshold{k}{p} \equiv \threshold{k}{p}(\Omega)=
\begin{cases}
\left(\optenergy{k-1}{p}(\Omega)^p + \infess(\Omega)^p\right)^{1/p}   \qquad &\text{for } 1 \leq p<\infty,\\
\infess(\Omega) \qquad &\text{for } p=\infty,
\end{cases}
\end{equation}
as the threshold values.  We will also write $\thresholdrel{k}{p}$ for the corresponding weak or functional version, that is, the same quantity but where $\optenergy{k}{p}$ has been replaced by $\optenergyrel{k}{p}$. With this notation, the key inequality in Theorem~\ref{thm:firstmain} reads
\begin{equation}
\label{eq:threshold-inequality}
\optenergy{k}{p}(\Omega)\leq \threshold{k}{p}(\Omega)
\end{equation}
for all $k\in \N$ and all $p\in [1,\infty]$. Our second main result is the following complementary existence statement for the \emph{weak formulation} of the problem.

\begin{theorem}[Existence below the threshold]\label{thm:secondmain}
Given $k\ge 1$ and $1\le p \leq \infty$, if we have the strict inequality
\begin{equation}
\label{eq:strict-p}
\optenergyrel{k}{p}(\Omega) <  \thresholdrel{k}{p}(\Omega),
\end{equation}
then $\optenergyrel{k}{p} (\Omega)$ is achieved at some $u_1,\ldots, u_k\in H^{1}_{0,V}(\Omega)\setminus \{0\}$ with $u_i\cdot u_j\equiv 0$ for every $i\neq j$.
\end{theorem}

\begin{remark}
In the case $p=\infty$, the inequality \eqref{eq:optimal-inequality-infty} and a threshold condition analogous to \eqref{eq:strict-p} for the existence of SMPs were deduced in the sandbox case of unbounded \emph{quantum graphs} in \cite{HKS23}. There, however, the methods of proof were completely different (and the nature of the results was much more limited) due to the essentially one-dimensional nature of the problem considered there. Indeed, the variational (weak) formulation was not considered at all; rather, it was sufficient to study a suitable notion of convergence of sequences of subsets of the graph with respect to a Hausdorff distance, inspired by the framework introduced in \cite{KKLM21}. In higher dimensions, this metric fails completely to have the necessary compactness properties, and questions such as stability and regularity of sets also become an issue. Thus we need to revert to the relaxed variational model, as well as study the regularity of the supports. This also reveals, among other things, the new phenomenon, mentioned above, that the relaxed problem may not admit a solution even if the set formulation of the problem does.
\end{remark}

Inequality \eqref{eq:strict-p} is a compactness condition. Indeed, under these assumptions, for the proof of Theorem~\ref{thm:secondmain} we show that minimizing sequences strongly converge in $H^1_{0,V}(\Omega)$, see Theorem~\ref{thm:existence} below, cf.\ also Remark~\ref{rem:infess}. For $1 \leq p < \infty$, this condition recalls analogous inequalities and conditions for elliptic problems with critical nonlinearities. For instance, in the celebrated Brezis--Nirenberg \cite{BrezisNirenberg} problem 
\[
-\Delta u+\lambda u=|u|^{2^*-2}u \text{ in } \Omega, \quad u=0 \text{ on } \partial \Omega
\]
(with $\Omega$ bounded), a compactness condition is related to the energy being strictly smaller than the least energy level $c_{\R^d}$ of the equation $-\Delta U=U^{2^*-1}$ in $\R^d$.  For variational semilinear elliptic systems  with competition terms and critical exponents, a compactness condition that ensures the existence of least energy solutions with all components non zero is guaranteed below a threshold level, in the same spirit as \eqref{eq:strict-p}; for this, we refer for instance to \cite[Proposition 3.3 and Lemma 4.2]{ClappPistoiaTavares}, \cite[Lemma 4.10]{ClappSzulkin2019} and \cite[Lemma 5.1]{ChenZouARMA2012}, see also \cite{ChenZou2,TavaresYou,TavaresYouZou}. Finally, for results of a similar flavor in the framework of semilinear equations on \emph{noncompact} metric graphs, see \cite[Theorem~1.3]{deCosteretal2023}.

\smallbreak

One of the advantages of dealing with the weak formulation $\optenergyrel{k}{p}$ is that it allows us to adapt previously known regularity techniques. In particular, this will allow us to prove that, actually, $\optenergyrel{k}{p}(\Omega)$ and $\optenergy{k}{p}(\Omega)$ always coincide (and, thus, also $\threshold{k}{p}(\Omega)$ and $\thresholdrel{k}{p}(\Omega)$).

\begin{theorem}[Regularity of the minimizers]
\label{thm:regularity}
Take $k \in \N$. Let $p\in [1,\infty)$ and assume there is a minimizer $(u_1,\ldots, u_k)$ of $\optenergyrel{k}{p}(\Omega)$. Then the following statements hold.
\begin{itemize}
    \item[(1)] For all $i=1,\ldots,k$, we have $u_i\in C^{0,1}(\Omega)$. In particular, $(\omega_1,\ldots, \omega_k):=(\{u_1\neq 0\},\ldots, \{u_k\neq 0\})\in \partitionset{k}$ is a minimizing partition for $\optenergy{k}{p} (\Omega)$, and
    \[
    -\Delta u_i+V(x)u_i=\lambda(\omega_i)u_i \quad \text{ in } \omega_i.
    \]
    \item[(2)] Suppose in addition that $V \in C^1(\Omega)$. Then the nodal set $\Gamma:=\{x\in \Omega: u_i(x)=0 \text{ for every i}\}$ can be decomposed as a union of a regular and a singular part, $\Gamma=\mathscr{R} \cup \mathscr{S}$, where $\mathscr{R} \cap \mathscr{S}=\emptyset, \mathscr{R}$ is a collection of $(d-1)$-dimensional $\mathcal{C}^{1, \alpha}$-surfaces, and $\mathscr{S}$ is a closed subset of $\Gamma$ with Hausdorff measure less than or equal to $d-2$. In particular, the partition exhausts the whole domain, in the sense that 
    \[
    \overline \Omega=\cup_{i=1}^k \overline{\omega}_i, \text{ and } \Gamma=\cup_{i=1}^k (\partial \omega_i\cap \Omega).
    \]
    Moreover,
    \begin{itemize}
\item[(a)]  given $x_0 \in \mathscr{R}$, there exist $i \neq j$ such that
$$
\lim _{x \rightarrow x_0^{+}}\left|\nabla u_i(x)\right|=\lim _{x \rightarrow x_0^{-}}\left|\nabla u_j(x)\right| \neq 0,
$$
where $x \rightarrow x_0^{ \pm}$are the limits taken from opposite sides of $\mathscr{R}$;
\item[(b)] for $x_0 \in \mathscr{S}$, we have
$$
\lim _{x \rightarrow x_0}\left|\nabla u_i(p)\right|=0 \quad \text { for every } i=1, \ldots, k;
$$

\item[(c)] in dimension $d=2$,  $\Gamma$ locally consists  of a finite collection of regular curves meeting with equal angles at isolated singular points. 
    \end{itemize}
\end{itemize}

On the other hand, if $p=\infty$ and the threshold condition \eqref{eq:strict-p} is satisfied, then there \emph{exists} a minimizer $(u_1,\ldots, u_k)$ of $\optenergyrel{k}{\infty} (\Omega)$ for which (1)-(2) above are true.
\end{theorem}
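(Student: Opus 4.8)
The plan is to show that any minimizer $(u_1,\dots,u_k)$ of $\optenergyrel{k}{p}(\Omega)$ is, after normalisation, a \emph{segregated state} of the competing-systems type already studied in the bounded-domain setting, and then to feed it into the now-classical interior and free-boundary regularity theory (\cite{CoTeVe05,HHT09,NTTV1,CaffarelliLin2,TavaresTerracini,RamosTavaresTerracini}); the only genuinely new issues are the non-compactness of $\Omega$, the presence of the potential $V$, the $p$-norm, and the endpoint $p=\infty$. For the Euler--Lagrange system: since $\mathcal R_V(|u_i|)=\mathcal R_V(u_i)$ and disjointness is insensitive to signs, we may take $u_i\ge 0$; write $\lambda_i:=\mathcal R_V(u_i)$. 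Perturbing $u_i$ by $\varphi\in C_c^\infty(\{u_i\neq 0\})$ respects the constraint $u_iu_j=0$, so stationarity gives $-\Delta u_i+Vu_i=\lambda_i u_i$ on $\{u_i\neq 0\}$. A mass-redistribution variation --- pushing a sliver of $u_j$ into $u_i$ across a shared boundary point, and using that the subdifferential of the $p$-norm at the minimizer has nonnegative entries --- yields, with $\hat u_i:=u_i-\sum_{j\neq i}u_j$, the differential inequality
\[
-\Delta\hat u_i+V\hat u_i\ \ge\ \lambda_i u_i-\sum_{j\neq i}\lambda_j u_j\qquad\text{in }\mathcal D'(\Omega),
\]
so that $(u_1,\dots,u_k)$ belongs exactly to the class to which the regularity machinery applies.

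\emph{Interior regularity; proof of (1).} As $V\in L^\infty_{\text{loc}}(\Omega)$ and $V\ge 0$, each $u_i\ge 0$ is a subsolution of $-\Delta u_i=\lambda_i u_i$ with bounded zeroth-order coefficient, hence $u_i\in L^\infty_{\text{loc}}(\Omega)$ by standard subsolution estimates, and then $u_i\in C^{0,\alpha}_{\text{loc}}$. The uniform Lipschitz bound for segregated states (\cite{CoTeVe05,NTTV1}) upgrades this to $u_i\in C^{0,1}(\Omega)$. Consequently $\omega_i:=\{u_i\neq 0\}$ is open and nonempty, the $\omega_i$ are pairwise disjoint (continuous functions whose product vanishes a.e.), so $(\omega_1,\dots,\omega_k)\in\partitionsetk$, and $\lambda(\omega_i)\le\mathcal R_V(u_i)$ because $u_i\in H^1_{0,V}(\omega_i)$. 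Hence, by \eqref{eq:inequality_relaxed},
\[
\optenergy{k}{p}(\Omega)\ \le\ \energy{k}{p}(\omega_1,\dots,\omega_k)\ \le\ \energyrel{k}{p}(u_1,\dots,u_k)\ =\ \optenergyrel{k}{p}(\Omega)\ \le\ \optenergy{k}{p}(\Omega),
\]
so all inequalities are equalities; since $p$-norms are strictly monotone in each coordinate, $\lambda(\omega_i)=\mathcal R_V(u_i)$ for every $i$. Thus $u_i$ is a ground state of $\omega_i$, solves $-\Delta u_i+Vu_i=\lambda(\omega_i)u_i$ there, and $(\omega_1,\dots,\omega_k)$ minimizes $\optenergy{k}{p}(\Omega)$; this is (1).

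\emph{Free boundary --- proof of (2) --- and $p=\infty$.} If $V\in C^1(\Omega)$, the coefficients are regular enough to run the Almgren and Alt--Caffarelli--Friedman monotonicity formulas on the $\hat u_i$; the blow-up analysis on $\Gamma=\{u_1=\dots=u_k=0\}$ is purely local and identical to the bounded case, giving $\Gamma=\mathscr R\cup\mathscr S$ with $\mathscr R$ locally a $C^{1,\alpha}$ hypersurface (Schauder estimates, using $V\in C^{0,\alpha}_{\text{loc}}$) separating exactly two cells and with $|\nabla u_i|=|\nabla u_j|\neq 0$ across it, and $\mathscr S$ relatively closed with Hausdorff dimension at most $d-2$ (Federer dimension reduction) and $|\nabla u_i|\to 0$ on it; for $d=2$ the classification of homogeneous blow-ups gives the finite family of $C^{1,\alpha}$ arcs meeting at equal angles at isolated points. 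Exhaustion follows since an open set $B\subset\Gamma$ would abut some $\overline{\omega_i}$, and attaching $B$ to $\omega_i$ through a thin channel would strictly decrease $\lambda(\omega_i)$ by strict domain monotonicity (Lemma~\ref{lem:domain-monotonicity}) without breaking disjointness, hence strictly decrease $\energyrel{k}{p}$ for $p<\infty$ --- a contradiction; so $\overline\Omega=\bigcup_i\overline{\omega_i}$ and $\Gamma=\bigcup_i(\partial\omega_i\cap\Omega)$. For $p=\infty$, under \eqref{eq:strict-p} Theorem~\ref{thm:secondmain} supplies a minimizer, and since every minimizer has all $\mathcal R_V(u_i)\le\optenergyrel{k}{\infty}(\Omega)<\Sigma(\Omega)$, the set of minimizers is compact in $H^1_{0,V}(\Omega)$ (Remark~\ref{rem:infess} and the compactness behind Theorem~\ref{thm:existence}); pick one that in addition minimizes $\sum_i\lambda(\omega_i)$. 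No interior perturbation of a single $u_i$ can then strictly lower $\lambda_i$, since that would strictly lower $\sum_j\lambda(\omega_j)$ without raising $\max_j\lambda_j$; hence $u_i$ is a ground state of $\omega_i$, the interior equation holds, the redistribution argument again gives the inequality for $\hat u_i$, and the previous two steps apply to this minimizer, yielding (1)--(2).

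\emph{Main obstacle.} The crux is the first step: the disjointness constraint is precisely what turns $k$ decoupled eigenvalue problems into a genuinely coupled competing-systems problem, and one must extract from the $p$-norm (and, for $p=\infty$, from the refined minimizer) exactly the differential inequalities demanded by the external regularity theorems, while verifying that the non-compactness of $\Omega$ and the mere local boundedness (respectively $C^1$-regularity) of $V$ do not obstruct the local $L^\infty$-bounds or the monotonicity formulas. The attainment of $\sum_i\lambda(\omega_i)$ among all $\optenergyrel{k}{\infty}$-minimizers is where the threshold hypothesis \eqref{eq:strict-p} enters in the $p=\infty$ case.
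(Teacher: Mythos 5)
Your overall architecture matches the paper's: derive CTV-type differential inequalities from domain variations and then invoke the external regularity theory of \cite{CoTeVe05,HHT09,TavaresTerracini}. However, the differential inequality you write down in your ``crux'' step is wrong for $1<p<\infty$. The first variation of the $\ell^p$-norm weights the change in $\mathcal{R}_V(u_i)$ by $\mathcal{R}_V(u_i)^{p-1}/\optenergyrel{k}{p}(\Omega)^{p-1}$, and these weights are \emph{different for different $i$} unless the minimizer is an equipartition. Consequently the mass-redistribution variation does not yield
$-\Delta\hat u_i+V\hat u_i\ge \lambda_i u_i-\sum_{j\neq i}\lambda_j u_j$ for $\hat u_i=u_i-\sum_{j\neq i}u_j$; it yields this inequality only for the \emph{rescaled} functions $v_i=a_iu_i$ with $a_i^2=(\lambda_i/\optenergyrel{k}{p}(\Omega))^{p-1}$, i.e.
$-\Delta(v_i-\sum_{j\neq i}v_j)+V(v_i-\sum_{j\neq i}v_j)\ge \lambda_i v_i-\sum_{j\neq i}\lambda_j v_j$ (this is the paper's Proposition~\ref{prop:diff-ineq-p}; note $\mathcal{R}_V(v_i)=\mathcal{R}_V(u_i)$ but the inequality is $1$-homogeneous in the functions, so the rescaling matters). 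The fix is exactly this rescaling: one then applies Theorem~\ref{thm:ClassS} to $(v_1,\ldots,v_k)$ on $\Omega\cap B_r$ with $f_i(x,s)=(\lambda_i-V(x))s$, and Lipschitz regularity of $u_i$ follows since $a_i\neq 0$. Your appeal to ``nonnegative entries of the subdifferential'' glosses over precisely the point where the $p$-norm enters nontrivially. (Your identification $\lambda(\omega_i)=\mathcal{R}_V(u_i)$ via strict coordinatewise monotonicity of the $p$-norm is fine, and your exhaustion argument via thin channels is both unnecessary --- exhaustion is part of what Theorem~\ref{thm:ClassS} delivers --- and shaky, since strict domain monotonicity and capacity-zero channels are delicate, especially in $d=2$; cf.\ Remark~\ref{rem:super-strip}.)

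For $p=\infty$ your route genuinely diverges from the paper's and has a gap. The paper takes minimizers $U_p$ for finite $p$, uses Proposition~\ref{prop:p} to pass to the limit $p\to\infty$, obtains limiting weights $\tilde a_i$ together with the limiting differential inequalities, and then must separately show all $\tilde a_i\neq 0$ (using the exhaustion property from Theorem~\ref{thm:ClassS} itself); this is why the theorem only asserts regularity for \emph{one} minimizer at $p=\infty$. Your alternative --- pick, among all $\optenergyrel{k}{\infty}$-minimizers, one minimizing $\sum_i\lambda_i$ (the compactness claim here is correct, by Theorem~\ref{thm:existence}) --- does give the interior Euler--Lagrange equation, because perturbing a single $u_i$ inside its own support leaves the other components untouched. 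But it does \emph{not} give inequality (2): the redistribution variation $((\hat u_1+t\varphi)^+,(\hat u_2-t\varphi)^+,\ldots)$ changes \emph{all} components at first order, and nothing guarantees the perturbed tuple still satisfies $\max_j\mathcal{R}_V\le\optenergyrel{k}{\infty}(\Omega)$; a perturbation that lowers the sum while pushing one component above the max is simply inadmissible for your constrained secondary problem, so no contradiction is obtained. You would need an additional argument (penalization, or the paper's $p\to\infty$ limit) to close this.
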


A direct consequence of Theorem \ref{thm:regularity}-(1) is that  $\optenergy{k}{p}(\Omega) = \optenergyrel{k}{p}(\Omega)$ whenever the threshold condition \eqref{eq:strict-p} is satisfied; it turns out that this identity is true independently of \eqref{eq:strict-p}. Moreover, for $p<\infty$, if $\optenergyrel{k}{p}(\Omega)$ is attained, then there is an optimal partition attaining $\optenergy{k}{p}(\Omega)$.  A natural question is then what happens in the remaining cases. Remarkably, it turns out that the answer depends on $p$. In summary:

\begin{proposition}
\label{prop:weak-and-strong}
For any $k \in \N$ and any $ p\in [1,\infty]$, we have
\begin{equation}
\label{eq:weak-and-strong}
    \optenergy{k}{p}(\Omega) = \optenergyrel{k}{p}(\Omega)
    \quad \text{and} \quad \threshold{k}{p}(\Omega) = \thresholdrel{k}{p}(\Omega).
\end{equation}
In particular, if $\optenergy{k}{p}(\Omega) < \threshold{k}{p}(\Omega)$, then $\optenergy{k}{p}(\Omega)$ is achieved at some $k$-partition $(\omega_1,\ldots,\omega_k)$ consisting of connected, open sets. Moreover: 
\begin{itemize}
    \item[(1)] When $p=\infty$, there always exists a partition attaining $\optenergy{k}{\infty}(\Omega)$;
    \item[(2)] There exist a domain $\Omega$ and a potential $V$ such that, for any $1 \leq p \leq \infty$, $\optenergy{k}{p}(\Omega)$ is attained but $\optenergyrel{k}{p}(\Omega)$ is not attained;
    \item[(3)] There exist a domain $\Omega$ and a potential $V$ such that, for any $1\leq p<\infty$, neither $\optenergy{k}{p}(\Omega)$ nor $\optenergyrel{k}{p}(\Omega)$ is attained.
    \end{itemize}
\end{proposition}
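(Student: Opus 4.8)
The plan is to read off \eqref{eq:weak-and-strong} and statement (1) from Theorems~\ref{thm:firstmain}, \ref{thm:secondmain} and \ref{thm:regularity}, and to build the domains and potentials in (2) and (3) by hand.

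\textbf{The identity \eqref{eq:weak-and-strong} and statement (1).} Since $\optenergyrel{k}{p}(\Omega)\le\optenergy{k}{p}(\Omega)$ always holds by \eqref{eq:inequality_relaxed}, only the reverse inequality is at issue, and I would split according to whether the threshold condition \eqref{eq:strict-p} holds. If $\optenergyrel{k}{p}(\Omega)<\threshold{k}{p}(\Omega)$, Theorem~\ref{thm:secondmain} produces a minimizer $(u_1,\dots,u_k)$ of $\optenergyrel{k}{p}(\Omega)$ (taking, for $p=\infty$, the special minimizer from the last assertion of Theorem~\ref{thm:regularity}), and Theorem~\ref{thm:regularity}(1) then asserts that $(\omega_i):=(\{u_i\neq 0\})$ is a minimizing partition for $\optenergy{k}{p}(\Omega)$; since $u_i\in H^1_{0,V}(\omega_i)$ we have $\lambda(\omega_i)\le\mathcal{R}_V(u_i)$, so $\optenergy{k}{p}(\Omega)=\energy{k}{p}(\omega_1,\dots,\omega_k)\le\energyrel{k}{p}(u_1,\dots,u_k)=\optenergyrel{k}{p}(\Omega)$, giving equality (and, in particular, attainment of $\optenergy{k}{p}(\Omega)$). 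If instead \eqref{eq:strict-p} fails, then $\optenergyrel{k}{p}(\Omega)\ge\threshold{k}{p}(\Omega)$, while $\optenergyrel{k}{p}(\Omega)\le\optenergy{k}{p}(\Omega)\le\threshold{k}{p}(\Omega)$ by Theorem~\ref{thm:firstmain}, so all three coincide; when $p=\infty$ this common value is $\infess(\Omega)$, and the $k$-partition $\partition$ supplied by \eqref{eq:optimal-inequality-infty} has $\energy{k}{\infty}(\partition)\le\infess(\Omega)=\optenergy{k}{\infty}(\Omega)$, hence attains the infimum. In every case \eqref{eq:weak-and-strong} holds and, for $p=\infty$, $\optenergy{k}{\infty}(\Omega)$ is attained, which is (1).

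\textbf{Statement (2).} Take $\Omega=\R^d$ and $V\equiv 0$; then $\infess(\R^d)=0$, so by Theorem~\ref{thm:firstmain} and \eqref{eq:weak-and-strong} we get $\optenergy{k}{p}(\R^d)=\optenergyrel{k}{p}(\R^d)=0$ for every $p\in[1,\infty]$. Choosing $k$ pairwise disjoint open cones $C_1,\dots,C_k\subset\R^d$ (cones over $k$ disjoint open caps of $S^{d-1}$), each $C_i$ is invariant under the dilations $x\mapsto x/t$, so rescalings of a fixed $0\neq\varphi\in C_c^\infty(C_i)$ are admissible test functions with Rayleigh quotient $t^{-2}\mathcal{R}_0(\varphi)\to 0$; hence $\lambda(C_i)=0$, $\energy{k}{p}(C_1,\dots,C_k)=0$, and $\optenergy{k}{p}(\R^d)$ is attained for every $p$. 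On the other hand, a minimizer $(u_1,\dots,u_k)$ of $\optenergyrel{k}{p}(\R^d)=0$ would force $\mathcal{R}_0(u_i)=0$, i.e.\ $\nabla u_i\equiv 0$, i.e.\ $u_i\equiv 0$ in $H^1(\R^d)$, contradicting $u_i\neq 0$; so $\optenergyrel{k}{p}(\R^d)$ is attained for no $p$.

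\textbf{Statement (3).} Necessarily $k\ge 2$ here, since $\optenergy{1}{p}(\Omega)=\lambda(\Omega)$ is always attained by the trivial partition $(\Omega)$. I would choose $(\Omega,V)$ so that $-\Delta+V$ on $\Omega$ has exactly one point of spectrum below $\infess(\Omega)$, namely a simple eigenvalue $\mu_1:=\lambda(\Omega)$ whose ground state $\phi_1$ is strictly positive on the connected set $\Omega$ — for instance $\Omega=\R^2$ with $V=c\,\mathbb{1}_{\R^2\setminus B_1}$ for small $c>0$ (so $\infess(\Omega)=c$ and the shallow two-dimensional barrier traps exactly one bound state), or a bounded planar domain with a thin half-infinite cylindrical end of width $w$, tuned so that only $\lambda_1$ lies below $\infess(\Omega)=\pi^2/w^2$. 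First, by induction on $\ell\ge 1$ I would establish $\optenergy{\ell}{p}(\Omega)=(\mu_1^p+(\ell-1)\infess(\Omega)^p)^{1/p}$ for $1\le p<\infty$: the upper bound is the threshold value $\threshold{\ell}{p}(\Omega)$ from Theorem~\ref{thm:firstmain} combined with the inductive hypothesis, while for the lower bound one argues that in any $\ell$-partition at most one cell can have $\lambda(\omega_i)<\infess(\Omega)$ — for if two did, then (noting $\lambda(\omega_i)<\infess(\Omega)\le\infess(\omega_i)$ by domain monotonicity, so $\lambda(\omega_i)$ is an eigenvalue of $\omega_i$) the two corresponding ground states, extended by zero, are $L^2(\Omega)$-orthogonal and span a two-dimensional subspace of $H^1_{0,V}(\Omega)$ on which $\mathcal{R}_V<\infess(\Omega)$, contradicting that the second Courant--Fischer value of $-\Delta+V$ on $\Omega$ equals $\infess(\Omega)$ — and that single cell has $\lambda(\omega_i)\ge\lambda(\Omega)=\mu_1$ while the remaining cells have $\lambda\ge\infess(\Omega)$. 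Second, for non-attainment, if an $\ell$-partition with $\ell\ge 2$ attained this value, equality would force some cell with $\lambda(\omega_i)=\mu_1$; then a ground state of $\omega_i$ extended by zero is an eigenfunction of $-\Delta+V$ on $\Omega$ at $\mu_1$, hence a nonzero multiple of the everywhere-positive $\phi_1$, which forces $\omega_i=\Omega$ and leaves no room for a second nonempty cell — a contradiction. So $\optenergy{k}{p}(\Omega)$ is not attained; and then, by \eqref{eq:weak-and-strong} and Theorem~\ref{thm:regularity}(1) (which would turn any minimizer of $\optenergyrel{k}{p}(\Omega)$ into a minimizing partition), neither is $\optenergyrel{k}{p}(\Omega)$, for every $1\le p<\infty$. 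The parts \eqref{eq:weak-and-strong}, (1), (2) are essentially bookkeeping on top of the earlier theorems; the main obstacle is in (3), namely verifying rigorously that the chosen $(\Omega,V)$ has exactly one simple eigenvalue below its essential spectrum (classical for the weakly coupled two-dimensional barrier; an explicit, if elementary, eigenvalue count for the cylindrical-end domain) and making the two-sided estimate for $\optenergy{\ell}{p}(\Omega)$ and the rigidity ``$\lambda(\omega)=\mu_1\Rightarrow\omega=\Omega$'' watertight, which rests on domain monotonicity of $\infess$ and on simplicity and positivity of $\phi_1$.
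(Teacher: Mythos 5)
Your proof of \eqref{eq:weak-and-strong} and of item (1) is correct and is essentially the paper's argument (split on whether \eqref{eq:strict-p} holds; use Theorems~\ref{thm:secondmain} and~\ref{thm:regularity}(1) in the first case and the sandwich $\optenergyrel{k}{p}\le\optenergy{k}{p}\le\threshold{k}{p}\le\optenergyrel{k}{p}$ plus the partition from \eqref{eq:optimal-inequality-infty} in the second). For item (2) you also land on the paper's example ($\Omega=\R^d$, $V=0$, Example~\ref{ex:nopotential}); your attaining partition by dilation-invariant cones is a clean alternative to the paper's unions of annuli from Remark~\ref{rem:bounds}(2), and your non-attainment argument for the relaxed problem is identical. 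The genuine divergence is in item (3). The paper builds a completely explicit separable example (Examples~\ref{ex:durchbruechchen}--\ref{ex:fortschrittchen}: a half-strip with a one-dimensional step potential, where the eigenvalues below $\infess$ are counted by a transcendental equation), and then concludes non-attainment for $k=2$ via strict domain monotonicity of the discrete ground-state eigenvalue. You instead isolate the abstract mechanism — exactly one simple eigenvalue below $\infess(\Omega)$ with a positive ground state on a connected $\Omega$ — and run a min-max counting argument (at most one cell can have $\lambda(\omega_i)<\infess(\Omega)$) together with the rigidity $\lambda(\omega)=\lambda(\Omega)\Rightarrow|\Omega\setminus\omega|=0$; this buys you the exact value $\optenergy{\ell}{p}(\Omega)=(\mu_1^p+(\ell-1)\infess(\Omega)^p)^{1/p}$ and non-attainment for \emph{all} $k\ge2$ at once (and you rightly note $k\ge2$ is forced), whereas the paper only treats $k=2$ explicitly. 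The price is that your concrete candidates ($\R^2$ with a weak circular barrier, or a domain with one thin cylindrical end) rest on the classical but unproved-here fact that they have exactly one bound state below the essential spectrum; you flag this yourself, and it is the only step that would need to be written out (or replaced by the paper's explicit half-strip, which your abstract argument accepts as input without change). I see no error in the logic; all the ingredients you invoke (domain monotonicity of $\infess$, discreteness of eigenvalues below $\infess$, additivity of the form over disjointly supported functions) are available in the paper.
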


The examples mentioned in Proposition \ref{prop:weak-and-strong}-(2),(3) correspond to Examples \ref{ex:strip} and \ref{ex:fortschrittchen}, respectively.

\medbreak

\begin{remark}\label{oldremark}
Fix $1 \leq p \leq \infty$ and suppose the threshold condition \eqref{eq:strict-p} is satisfied. We claim that any minimizing partition $(\omega_1,\ldots, \omega_k)$ of $\optenergy{k}{p} (\Omega) = \optenergyrel{k}{p} (\Omega)$ satisfies
    \begin{equation}
    \label{eq:all-below}
    \infspec(\omega_i)<\infess(\Omega)\le \infess(\omega_i) \qquad \text{ for $i=1,\ldots, k$.}
    \end{equation}
Indeed, while for  $p=\infty$ this follows directly from the definition of $\optenergy{k}{\infty}$, for $1 \le p < \infty$ condition \eqref{eq:strict-p} implies that, for any fixed $j=1,\ldots, k$,
    \begin{displaymath}
        \sum_{i=1}^k \infspec(\omega_i)^p = \optenergy{k}{p}(\Omega)^p <  \optenergy{k-1}{p}(\Omega)^p + \infess(\Omega)^p \le \sum_{\substack{i=1\\ i\neq j}}^k \infspec(\omega_i)^p + \infess(\Omega)^p,
    \end{displaymath}
whence \eqref{eq:all-below}.
   
In particular, under the threshold condition \eqref{eq:strict-p}, any partition element $\omega_i$ of any minimizing partition of $\optenergy{k}{p}(\Omega)$ admits a ground state $u_i$ (that is, an eigenfunction), $\lambda(\omega_i)$ is a discrete eigenvalue and $(u_1,\ldots, u_k)$ is a minimizer for $\optenergyrel{k}{p}$. Combined with Theorem \ref{thm:regularity}-(1), this shows that, for  $p<\infty$ and under \eqref{eq:strict-p}, there is a one-to-one correspondence between minimizers of $\optenergyrel{k}{p}(\Omega)$ and $\optenergy{k}{p}(\Omega)$. Proposition~\ref{prop:weak-and-strong} shows that this is not, in general, the situation.
\end{remark}

Proposition~\ref{prop:weak-and-strong}-(2) showed that, for $p < \infty$, the strong formulation of the problem, $\optenergy{k}{p}$, may have a solution, while the weak formulation, $\optenergyrel{k}{p}$, does not; this is related to the non-existence of ground states of the partition cells $\omega_i$. For the case $p=\infty$, there is a slightly different example of how the behavior changes when the threshold condition fails.

\begin{proposition}[Equipartitions]\label{prop:egalite}
Let $p=\infty$. 
    \begin{itemize}
    \item[(1)] Take $k\geq 1$ and suppose that the threshold condition \eqref{eq:strict-p} holds. Then there exists a  minimizing $k$-partition $\partition = (\Omega_1,\ldots,\Omega_k)$ for $\optenergy{k}{\infty}(\Omega)$ for which
    \begin{displaymath}
        \lambda_1 (\Omega_1) = \ldots = \lambda_1 (\Omega_k) = \optenergy{k}{\infty} (\Omega),
    \end{displaymath}
    that is, $\partition$ is an equipartition.
    \item[(2)] There exist a domain $\Omega$, a potential $V$ and $k \geq 1$ such that \eqref{eq:strict-p} does not hold and there exists a minimizing $k$-partition for $\optenergy{k}{\infty}(\Omega)$ which is \emph{not} an equipartition.
    \end{itemize}
\end{proposition}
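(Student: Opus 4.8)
\emph{Part (1): an optimal equipartition below the threshold.} The plan is a cell-shrinking argument. By Theorem~\ref{thm:secondmain} together with the final assertion of Theorem~\ref{thm:regularity}, the threshold condition \eqref{eq:strict-p} for $p=\infty$ supplies a minimizing $k$-partition $\partition=(\omega_1,\dots,\omega_k)$ of $\optenergy{k}{\infty}(\Omega)$; put $M:=\optenergy{k}{\infty}(\Omega)$, so that $\infspec(\omega_i)\le M$ for all $i$, with equality for at least one index. Suppose $\partition$ is not an equipartition and fix $j$ with $\infspec(\omega_j)<M$. I would first produce a nonempty open set $\omega_j'\subseteq\omega_j$ with $\infspec(\omega_j')=M$: choose $x_0\in\omega_j$ and $r_0>0$ with $B_{r_0}(x_0)\subset\omega_j$ small enough that $\infspec(B_{r_0}(x_0))>M$ (possible since $V\ge 0$ forces $\infspec(B_r(x_0))\ge\lambda_1^{\mathrm{Dir}}(B_1)\,r^{-2}\to\infty$ as $r\to 0$); then $R\mapsto\infspec(\omega_j\cap B_R(x_0))$ is non-increasing and continuous on $[r_0,\infty)$, with value $\infspec(B_{r_0}(x_0))>M$ at $R=r_0$ and limit $\infspec(\omega_j)<M$ as $R\to\infty$, so the intermediate value theorem yields $R^\ast$ with $\omega_j':=\omega_j\cap B_{R^\ast}(x_0)$ as required. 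Replacing $\omega_j$ by $\omega_j'$ keeps the cells pairwise disjoint and keeps $\energy{k}{\infty}=M$, so the new $k$-partition is again minimizing, but has one more cell at level $M$. Iterating at most $k-1$ times yields a minimizing $k$-partition all of whose cells have $\infspec$ equal to $M$, i.e.\ an equipartition.

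\emph{Part (2): a non-equipartitioned minimizer at the threshold.} Here the plan is to exhibit an example in which exactly one cell is forced down to the bottom of the essential spectrum. Take $d=2$, $\Omega=\R^2$, $k=2$, and $V$ equal to a constant $M>0$ on $\{|x|>1\}$ and to $0$ on $\{|x|\le 1\}$ (or a smooth variant), with $M$ small. Then Persson's theorem gives $\infess(\R^2)=M$, since $V\equiv M$ off a compact set. Writing $-\Delta+V=(-\Delta-M\chi_{B_1(0)})+M$ (with $\chi$ the characteristic function), the eigenvalues of $-\Delta+V$ lying strictly below $\infess=M$ correspond to the negative eigenvalues of the attractive operator $-\Delta-M\chi_{B_1(0)}$ on $\R^2$; by the classical weak-coupling dichotomy for Schr\"odinger operators in dimension two, for $M$ sufficiently small this operator has \emph{exactly one} negative eigenvalue, so $-\Delta+V$ has a unique, simple eigenvalue $\mu_1<M$ below its essential spectrum, whence its second variational (min--max) eigenvalue equals $M$. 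Since any two disjointly supported $u_1,u_2\in H^1_{0,V}(\R^2)\setminus\{0\}$ span a two-dimensional subspace on which the Rayleigh quotient never exceeds $\max_i\mathcal R_V(u_i)$ --- the cross terms $\int\nabla u_1\cdot\nabla u_2$ and $\int Vu_1u_2$ vanishing because $u_1u_2=0$ a.e.\ --- the min--max characterization gives $\optenergyrel{2}{\infty}(\R^2)\ge M$; combined with $\optenergyrel{2}{\infty}(\R^2)\le\optenergy{2}{\infty}(\R^2)\le\infess(\R^2)=M$ from Theorem~\ref{thm:firstmain}, all three quantities equal $M$, and in particular \eqref{eq:strict-p} fails. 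Finally, for $R$ large the $2$-partition $\partition_R=(B_R(0),\,\R^2\setminus\overline{B_R(0)})\in\partitionset{2}$ has $\infspec(\R^2\setminus\overline{B_R(0)})=M$ (the exterior of a disk, on which $V\equiv M$, carries a Dirichlet Laplacian with spectrum $[0,\infty)$) and $\infspec(B_R(0))\downarrow\mu_1<M$ as $R\to\infty$; hence $\energy{2}{\infty}(\partition_R)=M=\optenergy{2}{\infty}(\R^2)$, so $\partition_R$ is a minimizing $2$-partition that is not an equipartition.

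\emph{Main obstacles.} In Part~(1) the delicate point is the continuity of $R\mapsto\infspec(\omega_j\cap B_R(x_0))$, which is what allows the intermediate value theorem to land exactly on $M$: one must exclude a jump of this monotone function across $M$, i.e.\ establish the (standard but not entirely trivial) continuity of the bottom of the Dirichlet spectrum of $-\Delta+V$ along monotone families of open sets. Concretely, if the right limit as $R\downarrow R_\ast$ were strictly above $\infspec(\omega_j\cap B_{R_\ast}(x_0))$, one would extract a weak $H^1$-limit of normalized near-ground-states of the shrinking domains $\omega_j\cap B_R(x_0)$, use Rellich compactness on a fixed bounded ball to see the limit is nonzero, check that it lies in $H^1_{0,V}(\omega_j\cap B_{R_\ast}(x_0))$ (a trace argument across the sphere $\partial B_{R_\ast}(x_0)$), and contradict lower semicontinuity of the Rayleigh quotient. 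In Part~(2) the only non-elementary ingredient is the spectral dichotomy that the chosen operator has exactly one eigenvalue below $\infess$ --- one below, so that $\partition_R$ is genuinely non-equipartitioned, and no more than one, so that the threshold \eqref{eq:strict-p} fails --- which is precisely the two-dimensional weak-coupling phenomenon; everything else in the example is routine.
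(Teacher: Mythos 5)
Your proposal is correct, but both parts take genuinely different routes from the paper. For part (1), the paper does not shrink cells at all: it takes the particular minimizer of $\optenergyrel{k}{\infty}(\Omega)$ built in Proposition~\ref{prop:sclass-infty} as the $p\to\infty$ limit of $p$-minimizers $u_{i,p}$, and observes that the normalizing coefficients $a_{i,p}^2=(\mathcal{R}_V(u_{i,p})/\optenergyrel{k}{\infty}(\Omega))^{p-1}$ would tend to $0$ for any component with $\mathcal{R}_V(\tilde u_i)<\optenergyrel{k}{\infty}(\Omega)$, contradicting $\tilde a_i\neq 0$; equipartition is thus forced on that specific minimizer, which moreover carries the regularity of Theorem~\ref{thm:regularity}. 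Your cell-shrinking argument is more elementary and applies to \emph{any} starting minimizer, but its load-bearing step is exactly the one you flag: the monotone function $R\mapsto\infspec(\omega_j\cap B_R(x_0))$ is left-continuous for free (compactly supported test functions), whereas right-continuity — needed so the intermediate value theorem does not jump over $M$ — requires showing that the weak limit of near-ground states of $\omega_j\cap B_R$ as $R\downarrow R_*$ lies in $H^1_{0,V}(\omega_j\cap B_{R_*})$; since $\omega_j$ is an arbitrary open set, this needs the quasi-continuity/capacitary characterization of $H^1_0$ (a function in $H^1_0(\omega_j)$ vanishing a.e.\ outside $\overline{B_{R_*}}$ vanishes q.e.\ there, hence on the sphere, hence lies in $H^1_0(\omega_j\cap B_{R_*})$), which is standard but should be made explicit; note also that each shrunk cell still admits a ground state since $\infspec(\omega_j')=M<\infess(\Omega)\le\infess(\omega_j')$. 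For part (2), the paper's Example~\ref{ex:liberte} uses a \emph{deep} well $V_{r,c}$ in $\R^d$ with finitely many bound states and a partition into $k-1$ wedges plus the central ball, whereas you use a \emph{shallow} two-dimensional well with exactly one bound state (Simon's weak-coupling result) and the partition $(B_R(0),\R^2\setminus\overline{B_R(0)})$; both are valid, your example trading the CLR-type finiteness bound for the two-dimensional weak-coupling dichotomy, and the two directions of that dichotomy (at least one eigenvalue so the partition is not an equipartition, at most one so the threshold fails) are correctly identified as the only non-routine inputs.
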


We strongly expect that the conclusion of (1) should hold for \emph{any} minimizing $k$-partition of $\Omega$ if the threshold condition \eqref{eq:strict-p} holds (for $p=\infty$), but it would take us too far afield to include a proof here. The particular solution mentioned in (1) is related to the statement regarding $p=\infty$ in Theorem \ref{thm:regularity} (which we also expect to be true for any optimal partition); regarding (2), see Example \ref{ex:liberte}.

\subsection{Discussion and complementary results.}
In this subsection we make several comments related to our previous results. First, we discuss weakening or changing some of our assumptions, in particular allowing $V$ to have a negative part; we will also briefly consider the special case where $-\Delta+V$ has compact resolvent. Second, we return to the two bounds which appear in the threshold inequality \eqref{eq:optimal-inequality-p} in Theorem~\ref{thm:firstmain}. We finish by analyzing the behavior of $\optenergy{k}{p}(\Omega)$ and the optimal threshold $\threshold{k}{p}(\Omega)$ as functions of $k$ and $p$.

\subsubsection{On the assumptions on $\Omega$ and $V$}
\label{sec:omegavee}

\begin{remark}\label{rem:assumptions_V}
If $\Omega$ has finite volume or $V(x) \to \infty$ as $|x|\to\infty$, then the embedding $H^1_{0,V}(\Omega)\hookrightarrow L^2(\Omega)$ is compact. Thus the corresponding Schr\"odinger operator has compact resolvent on $\Omega$ and, in particular, $\infess(\Omega) = \infty$ and $\thresholdrel{k}{p}(\Omega)=\infty$ for every $p\in [1,\infty]$. In particular, the compactness condition \eqref{eq:strict-p}, i.e.
\[
 \optenergy{k}{p}(\Omega) = \optenergyrel{k}{p}(\Omega) < \thresholdrel{k}{p}(\Omega) = \threshold{k}{p}(\Omega),
\]
is always satisfied, and the conclusions of Theorems~\ref{thm:secondmain} and~\ref{thm:regularity} hold.

This shows in particular that our results include and generalize \cite{CoTeVe05} and \cite[Section 8.2, p.~315]{TavaresTerracini}, which deal with the case of a bounded domain $\Omega$. For examples of unbounded domains $\Omega$ for which \eqref{eq:strict-p} holds for some $k \geq 1$, see Examples~\ref{ex:compact-resolvent} and~\ref{ex:threshold-yes}.
\end{remark}

\begin{remark}
\label{rem:negative-potentials}
    If $p=1$ or $p=\infty$, the assumption that $V \ge 0$ can be weakened to $V_- := \max \{-V,0\} \in L^\infty(\Omega)$, as adding a constant $c \in \R$ to the equation, i.e. considering
    \[
        -\Delta u + (V+c)u = (\lambda+c) u,
    \]
    will only shift $\infspec$ and $\infess$ by the same constant $c$. 
    
    On the other hand, for $1<p<\infty$, the terms appearing in \eqref{eq:optimal-inequality-p} may no longer be well defined, since it is possible that $\lambda(\omega)$ and $\Sigma(\Omega)$ are negative. In this situation, the optimization problem $\optenergy{k}{p}(\Omega)$ may no longer make sense and one would have to consider other functionals such as 
    \[
    (\omega_1,\ldots, \omega_k)\mapsto \sum_{i=1}^k |\lambda(\omega_i)|^{p-1}\lambda(\omega_i).
    \]
    We will not consider these problems here.
\end{remark}

\subsubsection{On the threshold inequality \eqref{eq:optimal-inequality-p}}

\begin{remark}
\label{rem:bounds}
In this remark, we will explore a little further the nature of the two inequalities appearing in \eqref{eq:optimal-inequality-p}. For $1\leq p < \infty$, we show that both $\threshold{k}{p}(\Omega)<k^{1/p}\infess (\Omega)$ and $\threshold{k}{p}(\Omega)=k^{1/p}\infess (\Omega)$ are possible in specific cases, relating this to the relationship between the bottom of the spectrum $\infspec(\Omega)$ and the bottom of the essential spectrum $\infess(\Omega)$. In particular, we remark how condition $\threshold{k}{p}(\Omega)<k^{1/p}\infess (\Omega)$, unlike the corresponding condition for $p=\infty$  (namely $\threshold{k}{\infty}(\Omega)<\infess (\Omega)$), does not automatically imply the existence of a minimal $k$-tuple for neither $\optenergyrel{k}{p}(\Omega)$ nor $\optenergy{k}{p}(\Omega)$.
\begin{enumerate}
    \item Assume $\infspec (\Omega) < \infess (\Omega)$, which implies that $\lambda(\Omega)$ is an isolated simple eigenvalue.  It follows from the fact that $\optenergy{1}{p}(\Omega) = \optenergyrel{1}{p} (\Omega) = \infspec (\Omega)$ for any $p$ and inequality \eqref{eq:optimal-inequality-p}  for $\ell=1$ that
    \begin{equation}\label{eq:rem_strictineq}
        \optenergy{k}{p}(\Omega) = \optenergyrel{k}{p} (\Omega) < k^{1/p}\infess (\Omega)
    \end{equation}
    for all $k \ge 1$ and all $1 \le p < \infty$. In Example~\ref{ex:fortschrittchen}, we show a situation where
    \[
    \lambda(\Omega)<\Sigma(\Omega)\quad \text{ and }\quad \optenergy{k}{p} (\Omega) =\threshold{k}{p}(\Omega),
    \]
    and neither $\optenergy{k}{p}(\Omega)$ nor $\optenergyrel{k}{p}(\Omega)$ admit a minimizer for any $1 \leq p < \infty$ (concretely for $k=2$). This shows that, for some domains and some potentials, it may happen that
    \[
    \threshold{k}{p}(\Omega)<k^{1/p}\infess (\Omega) \quad \text{ and } \quad \optenergy{k}{p}(\Omega),\ \optenergyrel{k}{p}(\Omega) \text{ are not attained.}
    \]
    In particular, condition \eqref{eq:rem_strictineq} does not imply the existence of a minimal $k$-tuple for $\optenergyrel{k}{p}(\Omega)$.
    \item Now, assume $\infspec (\Omega) = \infess(\Omega)$, in which case there may or may not still exist a ground state at the bottom of the spectrum, see, e.g., \cite[Section~3.4]{BeSh91} for $d \geq 5$. Then, recalling that $\optenergy{k}{p}(\Omega)\geq k^{1/p}\lambda(\Omega)$, one obtains immediately from \eqref{eq:optimal-inequality-p} that
    \[
    \optenergy{k}{p} (\Omega) = \threshold{k}{p} (\Omega) = k^{1/p} \infess(\Omega)
    \]
    for $1\leq p<\infty$.
    
    Actually, the partition $\partition$ satisfying \eqref{eq:optimal-inequality-infty} in Theorem~\ref{thm:firstmain}, that is, with $\energy{k}{\infty}(\partition) \le \infess(\Omega)$, can be used as a test partition; this yields
    \begin{equation}
        k^{1/p} \infess(\Omega) = \optenergy{k}{p}(\Omega) \le \energy{k}{p}(\partition) \le k^{1/p} \infess(\Omega).
    \end{equation}
    Hence, in this case, there is always a partition attaining $\optenergy{k}{p} (\Omega)$, although $\optenergyrel{k}{p}(\Omega)$ may not be attained.
\end{enumerate}
\end{remark}

\subsubsection{Monotonicity of the energy levels with respect to $p$ and $k$}

Next, we consider the issue of the continuity and monotonicity of the studied quantities in $p$ and $k$. The former is actually a key factor in the proof of Theorem \ref{thm:regularity} for $p=\infty$.

\begin{proposition}[Behavior in $p$]
\label{prop:p}
Let $k \ge 1$. Then:
\begin{itemize}
\item[(1)] The functions
\begin{equation*}
    p \mapsto \optenergy{k}{p}(\Omega)=\optenergyrel{k}{p} (\Omega) \quad \text{ and }\quad p \mapsto \threshold{k}{p}(\Omega)  = \thresholdrel{k}{p}(\Omega)
\end{equation*}
are continuous and non-increasing in $p \in [1,\infty]$.
\end{itemize}
Now let $p \in [1,\infty]$ and suppose the threshold condition \eqref{eq:strict-p} is satisfied for this $p$ (in particular, by Theorem \ref{thm:secondmain}, a minimizer exists for $\optenergyrel{k}{p}(\Omega)$). Then:
\begin{itemize}
\item[(2)] There exists a neighborhood of $p \in [1,\infty]$ in which \eqref{eq:strict-p} holds. 
\item[(3)] (Continuity of the minimizers.) Suppose $p_n \to p \in [1,\infty]$ and, for each $n$, let $U_{p_n} = (u_{1,n},\ldots,u_{k,n}) \in H^1_{0,V} (\Omega)^k$ be a minimizing $k$-tuple for $\optenergyrel{k}{p_n} (\Omega)$, normalized so that $\|u_{i,n}\|_2 = 1$ for all $i=1,\ldots,k$. Then, up to a subsequence, $U_{p_n}$ converges in $H^1_{0,V}(\Omega)^k$ to a minimizing $k$-tuple $U_p$ for $\optenergyrel{k}{p}(\Omega)$.
\end{itemize}
\end{proposition}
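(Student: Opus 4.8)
\emph{Overview of the plan.} The monotonicity and continuity in $p$ will be reduced to the elementary behaviour of $\ell^p$-norms, and the continuity of minimizers will be obtained by re-running, with a \emph{uniform} sub-threshold gap, the concentration--compactness argument underlying Theorem~\ref{thm:existence}. Throughout recall $\optenergy{k}{p}(\Omega)=\optenergyrel{k}{p}(\Omega)$ by Proposition~\ref{prop:weak-and-strong}, and write $\infess=\infess(\Omega)$.

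\emph{Parts (1)--(2).} For a fixed vector $v\in[0,\infty)^m$ the map $p\mapsto\|v\|_p=(\sum_j v_j^p)^{1/p}$ (with $\|v\|_\infty=\max_j v_j$) is non-increasing on $[1,\infty]$ and satisfies $\|v\|_q\le\|v\|_p\le m^{1/p-1/q}\|v\|_q$ for $1\le p\le q\le\infty$ (with $1/\infty:=0$). Applying this with $m=k$ to the vector $(\mathcal{R}_V(u_1),\dots,\mathcal{R}_V(u_k))$, i.e. to $\energyrel{k}{p}(u_1,\dots,u_k)$, and taking the infimum over all admissible $k$-tuples shows that $p\mapsto\optenergyrel{k}{p}(\Omega)$ is non-increasing and that $\optenergyrel{k}{q}(\Omega)\le\optenergyrel{k}{p}(\Omega)\le k^{1/p-1/q}\optenergyrel{k}{q}(\Omega)$; since $\optenergyrel{k}{q}(\Omega)<\infty$ by \eqref{eq:inequality_relaxed}, letting $q\to p$ (for $p<\infty$), resp.\ $p\to\infty$ (using $k^{1/p}\to1$), gives continuity on $[1,\infty]$. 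For the threshold, observe that for $p<\infty$,
\[
\threshold{k}{p}(\Omega)=\inf_{(\omega_1,\dots,\omega_{k-1})\in\partitionset{k-1}}\Bigl(\textstyle\sum_{i=1}^{k-1}\infspec(\omega_i)^p+\infess^p\Bigr)^{1/p},
\]
an infimum of $\ell^p$-norms of $k$-vectors; if $\infess=\infty$ this equals $\infty$ for every $p$, and otherwise the same argument gives monotonicity and continuity on $[1,\infty)$, while continuity at $p=\infty$ follows from $\infess=\threshold{k}{\infty}(\Omega)\le\threshold{k}{p}(\Omega)\le k^{1/p}\infess$, the last bound using $\optenergy{k-1}{p}(\Omega)\le(k-1)^{1/p}\infess$ from Theorem~\ref{thm:firstmain}. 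Part (2) is then immediate: if $\infess=\infty$ then \eqref{eq:strict-p} holds for every exponent, and otherwise both sides of \eqref{eq:strict-p} are finite and continuous in $p$, so the strict inequality persists on a relative neighbourhood of $p$.

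\emph{Part (3).} By Parts (1)--(2), $\optenergyrel{k}{p_n}(\Omega)\to\optenergyrel{k}{p}(\Omega)$, $\threshold{k}{p_n}(\Omega)\to\threshold{k}{p}(\Omega)$, and there are $\delta>0$, $n_0$ with $\optenergyrel{k}{p_n}(\Omega)\le\threshold{k}{p_n}(\Omega)-\delta$ for $n\ge n_0$; in particular $U_{p_n}=(u_{1,n},\dots,u_{k,n})$ exists for such $n$ by Theorem~\ref{thm:secondmain}. From $\|u_{i,n}\|_2=1$ and $\mathcal{R}_V(u_{i,n})\le\optenergyrel{k}{p_n}(\Omega)\le\optenergyrel{k}{1}(\Omega)$ (monotonicity), the tuples $U_{p_n}$ are bounded in $H^1_{0,V}(\Omega)^k$; passing to a subsequence, $u_{i,n}\rightharpoonup u_i$ weakly in $H^1_{0,V}(\Omega)$ and, by local compactness of the embedding into $L^2$, $u_{i,n}\to u_i$ in $L^2_{\mathrm{loc}}(\Omega)$ and a.e., so $u_iu_j=0$ a.e.\ for $i\ne j$. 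The decisive step is to rule out escape of $L^2$-mass to infinity and conclude $u_{i,n}\to u_i$ strongly in $L^2(\Omega)$, whence $\|u_i\|_2=1$ and $u_i\ne0$: this is exactly the concentration--compactness mechanism of Theorem~\ref{thm:existence}, which one re-runs here. If mass escaped, one would split each $u_{i,n}$ into a compactly supported part and a far-away part whose Rayleigh quotient is $\ge\infess-o(1)$ by Persson's characterisation \eqref{eq:persson}, and the usual bookkeeping along the $\ell^{p_n}$-structure would force $\optenergyrel{k}{p_n}(\Omega)\ge\threshold{k}{p_n}(\Omega)-o(1)$ (via the $\min_\ell$ form of the threshold in Theorem~\ref{thm:firstmain}), contradicting the uniform gap $\delta$; the only new feature relative to Theorem~\ref{thm:existence} is that $p_n$ varies, which is harmless since the elementary $\ell^p$-inequalities used are uniform in $p$ on compacta of $[1,\infty]$ and $\delta$ is uniform. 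Once $\|u_i\|_2=1$, weak lower semicontinuity of $a_V$ gives $\mathcal{R}_V(u_i)\le\liminf_n\mathcal{R}_V(u_{i,n})$, and lower semicontinuity of the relaxed functional under these convergences (for $p<\infty$ splitting the $\ell^p$-sum and using $p_n\to p$, for $p=\infty$ using $\max_i\mathcal{R}_V(u_i)\le\liminf_n\optenergyrel{k}{p_n}(\Omega)$) yields $\energyrel{k}{p}(u_1,\dots,u_k)\le\optenergyrel{k}{p}(\Omega)$, i.e.\ $U_p:=(u_1,\dots,u_k)$ minimises $\optenergyrel{k}{p}(\Omega)$.

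\emph{Strong $H^1$-convergence and the main obstacle.} It remains to prove $u_{i,n}\to u_i$ strongly in $H^1_{0,V}(\Omega)$; given the weak convergence and $\|u_{i,n}\|_2\to\|u_i\|_2$, it suffices to show $\mathcal{R}_V(u_{i,n})\to\mathcal{R}_V(u_i)$ for each $i$. For $p<\infty$ this is elementary: from $\sum_i\mathcal{R}_V(u_{i,n})^{p_n}=\optenergyrel{k}{p_n}(\Omega)^{p_n}\to\optenergyrel{k}{p}(\Omega)^p=\sum_i\mathcal{R}_V(u_i)^p$ together with $\liminf_n\mathcal{R}_V(u_{i,n})^{p_n}\ge\mathcal{R}_V(u_i)^p$ for each $i$ one deduces $\mathcal{R}_V(u_{i,n})^{p_n}\to\mathcal{R}_V(u_i)^p$, hence $\mathcal{R}_V(u_{i,n})\to\mathcal{R}_V(u_i)$. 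For $p=\infty$ the $\ell^\infty$-functional does not split, and one must exploit the elliptic structure: for $p_n<\infty$, Theorem~\ref{thm:regularity}(1) together with Remark~\ref{oldremark} shows that each $u_{i,n}$ is, up to sign, the ground state of $-\Delta+V$ on $\omega_{i,n}=\{u_{i,n}\ne0\}$, so $a_V(u_{i,n})=\mathcal{R}_V(u_{i,n})=\infspec(\omega_{i,n})$ is bounded, $-\Delta u_{i,n}+V u_{i,n}=\infspec(\omega_{i,n})u_{i,n}$ in $\omega_{i,n}$, and $|u_{i,n}|$ is a global nonnegative subsolution of $-\Delta w\le\infspec(\omega_{i,n})w$ on $\Omega$; a Caccioppoli estimate then controls the $H^1_{0,V}$-energy of $u_{i,n}$ on annuli by its $L^2$-mass there, so the no-escape-of-mass property upgrades to uniform smallness of the $H^1_{0,V}$-tails, while on bounded sets the interior elliptic estimates for the system $(u_{1,n},\dots,u_{k,n})$ used in the proof of Theorem~\ref{thm:regularity} yield strong $H^1_{\mathrm{loc}}$-convergence; combining the two gives $a_V(u_{i,n})\to a_V(u_i)$. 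The bulk of the work, and the main obstacle, is thus the whole of Part (3): carrying the concentration--compactness proof of Theorem~\ref{thm:existence} out with a varying exponent and a uniform sub-threshold gap, and, for $p=\infty$, passing from weak to strong $H^1$-convergence, which cannot be done by soft functional-analytic arguments alone since, unlike for $p<\infty$, the $\ell^\infty$-functional does not pin down the individual cell Rayleigh quotients of the limit.
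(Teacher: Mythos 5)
Your Parts (1)--(2) are essentially the paper's argument and are fine. The problem is Part (3), where you have missed the one idea that makes the proof short, and replaced it with a program that is only sketched at exactly the points where the work would lie. The paper's route is: by the uniform $\ell^p$-comparison $\energyrel{k}{p_0}(U_{p_n})\le\max\{k^{1/p_0-1/p_n},1\}\,\energyrel{k}{p_n}(U_{p_n})=\max\{k^{1/p_0-1/p_n},1\}\,\optenergyrel{k}{p_n}(\Omega)\to\optenergyrel{k}{p_0}(\Omega)$, the tuples $U_{p_n}$ form a minimizing sequence for the \emph{fixed} limit problem $\optenergyrel{k}{p_0}(\Omega)$ (they are admissible and $L^2$-normalized). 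Theorem~\ref{thm:existence} is deliberately stated for an \emph{arbitrary} normalized minimizing sequence and already concludes, in its Step 8, strong convergence in $H^1_{0,V}(\Omega)^k$ of a subsequence to a minimizer. So under the threshold condition at $p_0$ there is nothing left to prove: no concentration--compactness with a varying exponent, and no elliptic regularity, is needed. The ``main obstacle'' you identify for $p=\infty$ (upgrading weak to strong $H^1$-convergence) is not an obstacle at all once Theorem~\ref{thm:existence} is applied to this minimizing sequence.

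As written, your substitute argument has genuine gaps. The re-run of the cut-off/IMS argument with varying $p_n$ and a uniform gap $\delta$ is asserted (``the usual bookkeeping \ldots would force''), not carried out. For $p=\infty$ you invoke Theorem~\ref{thm:regularity}(1) and Remark~\ref{oldremark} to claim each $u_{i,n}$ is a ground state on its support; this is only available for $p_n<\infty$ under the threshold condition at $p_n$, and fails for minimizers of the $\ell^\infty$-functional itself (a component with non-maximal Rayleigh quotient need not be an eigenfunction), so the case $p_n\equiv\infty$ is not covered by that mechanism. The Caccioppoli tail estimate and the ``interior elliptic estimates for the system'' yielding strong $H^1_{\mathrm{loc}}$-convergence are likewise left as claims. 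None of these steps is obviously false, but none is proved, and all of them are superfluous given the correct use of Theorem~\ref{thm:existence}.
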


\begin{proposition}[Monotonicity in $k$]\label{prop:aok}
Take $k\in \N$. Then 
\[
\optenergyrel{k}{p} (\Omega) \leq \optenergyrel{k+1}{p} (\Omega) \text{ for all $p\in [1,\infty]$},\qquad  
\thresholdrel{k}{p} (\Omega) \leq \thresholdrel{k+1}{p} (\Omega) \text{ for all $p\in [1,\infty)$.} 
\]
For $p \in [1,\infty)$, the inequalities are strict if the threshold condition \eqref{eq:strict-p} is satisfied for $\optenergyrel{k}{p} (\Omega)$.
\end{proposition}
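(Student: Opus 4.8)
The plan is to dispatch the two non-strict inequalities by an elementary ``forget one cell'' argument, and then to upgrade them to strict inequalities under the threshold condition \eqref{eq:strict-p} by combining Theorem~\ref{thm:secondmain} with the (elementary but crucial) fact that every Rayleigh quotient $\mathcal{R}_V(u)$ with $u\neq 0$ is \emph{strictly} positive.

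For the monotonicity of $\optenergyrel{k}{p}$, take any admissible $(k{+}1)$-tuple $(u_1,\ldots,u_{k+1})$; discarding $u_{k+1}$ leaves an admissible $k$-tuple, and since $V\ge 0$ forces $\mathcal{R}_V\ge 0$, we have $\energyrel{k}{p}(u_1,\ldots,u_k)\le \energyrel{k+1}{p}(u_1,\ldots,u_{k+1})$ (dropping a nonnegative summand if $p<\infty$, or a term from a maximum if $p=\infty$); taking the infimum over $(k{+}1)$-tuples gives $\optenergyrel{k}{p}(\Omega)\le \optenergyrel{k+1}{p}(\Omega)$. The same argument with partitions in place of tuples (and the convention $\optenergy{0}{p}(\Omega)=0$ when $k=1$) yields $\optenergy{k-1}{p}(\Omega)\le\optenergy{k}{p}(\Omega)$, whence, since $t\mapsto(t^p+\infess(\Omega)^p)^{1/p}$ is nondecreasing, $\threshold{k}{p}(\Omega)\le\threshold{k+1}{p}(\Omega)$ for $p<\infty$.

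Now suppose \eqref{eq:strict-p} holds at level $k$. By Theorem~\ref{thm:secondmain} there is a minimizer $(u_1,\ldots,u_k)$ for $\optenergyrel{k}{p}(\Omega)$, and by Remark~\ref{oldremark} we have $\mathcal{R}_V(u_i)=\lambda(\{u_i\neq 0\})<\infess(\Omega)$ for each $i$. Since $V\ge 0$ gives $a_V(u_i)\ge\int_\Omega|\nabla u_i|^2$, and a nonzero element of $H^1_{0,V}(\Omega)$ cannot have vanishing gradient (its extension by zero to $\R^d$ would be a nonzero constant in $H^1(\R^d)$), we get $\mathcal{R}_V(u_i)>0$; in particular $\infess(\Omega)>0$. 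Reordering so that $\mathcal{R}_V(u_k)=\min_i\mathcal{R}_V(u_i)$, the $(k{-}1)$-tuple $(u_1,\ldots,u_{k-1})$ is admissible (for $k=1$, read this off directly: $\optenergy{1}{p}(\Omega)=\lambda(\Omega)=\mathcal{R}_V(u_1)>0=\optenergy{0}{p}(\Omega)$), so, using Proposition~\ref{prop:weak-and-strong},
\[
\optenergy{k-1}{p}(\Omega)^p=\optenergyrel{k-1}{p}(\Omega)^p\le\sum_{i=1}^{k-1}\mathcal{R}_V(u_i)^p=\optenergyrel{k}{p}(\Omega)^p-\mathcal{R}_V(u_k)^p<\optenergyrel{k}{p}(\Omega)^p=\optenergy{k}{p}(\Omega)^p.
\]
Hence $\optenergy{k-1}{p}(\Omega)<\optenergy{k}{p}(\Omega)$, and, since $\infess(\Omega)>0$, strict monotonicity of $t\mapsto(t^p+\infess(\Omega)^p)^{1/p}$ gives $\threshold{k}{p}(\Omega)<\threshold{k+1}{p}(\Omega)$.

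It remains to prove $\optenergyrel{k}{p}(\Omega)<\optenergyrel{k+1}{p}(\Omega)$, and I expect this to be the only real obstacle: forgetting one component of an \emph{arbitrary} $(k{+}1)$-tuple gives a term-by-term strict gain, but this collapses to a non-strict inequality after passing to the infimum, because the discarded Rayleigh quotient can tend to $0$. To circumvent this I argue by contradiction: if $\optenergyrel{k+1}{p}(\Omega)=\optenergyrel{k}{p}(\Omega)$, then $\optenergyrel{k+1}{p}(\Omega)=\optenergyrel{k}{p}(\Omega)\le\optenergy{k}{p}(\Omega)<(\optenergy{k}{p}(\Omega)^p+\infess(\Omega)^p)^{1/p}=\threshold{k+1}{p}(\Omega)$ (the strict step uses $\infess(\Omega)>0$ from the previous paragraph), so \eqref{eq:strict-p} also holds at level $k{+}1$; Theorem~\ref{thm:secondmain} then provides a minimizer $(w_1,\ldots,w_{k+1})$ of $\optenergyrel{k+1}{p}(\Omega)$ with all $\mathcal{R}_V(w_i)>0$, and discarding the component of smallest Rayleigh quotient yields an admissible $k$-tuple with $\optenergyrel{k}{p}(\Omega)^p\le\optenergyrel{k+1}{p}(\Omega)^p-\min_i\mathcal{R}_V(w_i)^p<\optenergyrel{k+1}{p}(\Omega)^p=\optenergyrel{k}{p}(\Omega)^p$, a contradiction. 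The key idea is thus to bootstrap the threshold condition one level up, so that a genuine $(k{+}1)$-minimizer — on which the strict gain survives — becomes available.
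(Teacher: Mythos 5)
Your proof is correct and follows essentially the same route as the paper's: the paper splits into the cases $\optenergy{k+1}{p}(\Omega)=\threshold{k+1}{p}(\Omega)$ (settled immediately by the non-strict threshold monotonicity) and $\optenergy{k+1}{p}(\Omega)<\threshold{k+1}{p}(\Omega)$ (where a minimizing $(k+1)$-partition exists and one discards a cell), which is precisely your contradiction argument read contrapositively. You are in fact more careful on two points the paper leaves implicit: you justify that the discarded Rayleigh quotient is strictly positive (via the no-vanishing-gradient argument), and you derive the strict threshold inequality from $\optenergy{k-1}{p}(\Omega)<\optenergy{k}{p}(\Omega)$ --- which is what $\threshold{k}{p}(\Omega)<\threshold{k+1}{p}(\Omega)$ actually requires, given that these thresholds involve the levels $k-1$ and $k$ --- rather than from $\optenergy{k}{p}(\Omega)<\optenergy{k+1}{p}(\Omega)$.
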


Note that if, for some $k_0 \geq 1$, $\optenergyrel{k_0}{\infty} (\Omega)$ is equal to the threshold value, then it is immediate from Theorem~\ref{thm:firstmain} that the functions $k \mapsto \optenergyrel{k}{\infty} (\Omega) =  \thresholdrel{k}{\infty} (\Omega) =\infess(\Omega)$ are constant for $k \geq k_0$.

In the next remark, using the monotonicity in $p$, we explore a simple inequality that shows how the potential $V$ bounds  the number of $k$'s for which $\optenergy{k}{p}(\Omega)$ is below a certain value, 

\begin{remark}\label{rmk:clrestimate} 
    As in the case of bounded domains, it is immediate from the min-max principle for the $k$-th eigenvalue $\lambda_k (\Omega)$, if such an eigenvalue exists below $\infess(\Omega)$, that
    \begin{displaymath}
        \optenergy{k}{\infty}(\Omega) \geq \lambda_k (\Omega).
    \end{displaymath}
    Indeed, if $\optenergy{k}{\infty} (\Omega) = \infess (\Omega)$ there is nothing to prove; if $\optenergy{k}{\infty} (\Omega) < \infess (\Omega)$ and $(u_1,\ldots,u_k)$ is a minimizing family for $\optenergyrel{k}{\infty}(\Omega)$, then one has a family of $k$ orthogonal test functions for $\lambda_k(\Omega)$. It follows from the monotonicity of $p \mapsto \optenergy{k}{p}(\Omega)$ that in fact
    \begin{equation}
    \label{eq:energy-eigenvalue}
        \optenergy{k}{p}(\Omega)\ge \optenergy{k}{\infty}(\Omega) \ge \lambda_k(\Omega)
    \end{equation}
    for all $1 \leq p \leq \infty$, as long as $\lambda_k(\Omega) < \infess (\Omega)$ exists. 
    
    This can be immediately rephrased in terms of eigenvalue counting functions; we will do this as it will be useful in some of the examples, in particular Examples~\ref{ex:strip-ball} and~\ref{ex:fortschrittchen}. Given $c<\Sigma(\Omega)$, let
$N(c, -\Delta+V)$ be the number of  eigenvalues of $-\Delta+V$ less than or equal to $c$, and, for $p\in [1,\infty]$, let 
\[
\widetilde N_p(c,-\Delta +V):=\sharp\{k: \optenergyrel{k}{p}\leq c\}=\max\{k:\ \optenergyrel{k}{p}(\Omega)\leq c \},
\]  
so that \eqref{eq:energy-eigenvalue} may be rephrased as
\begin{equation}
\label{eq:counting-bounds}
    \widetilde N_p(c,-\Delta +V) \leq \widetilde N_\infty(c,-\Delta +V) \leq N(c, -\Delta+V).
\end{equation}
In particular, the number of $k$ for which $\optenergyrel{k}{p} < \infess(\Omega)$ is no larger than the number of eigenvalues (counted with multiplicities) of $-\Delta+V$ below $\infess(\Omega)$.
Note that \eqref{eq:counting-bounds} immediately implies various estimates on the number of such spectral minimal partitions. For example, using the well known CLR-inequality     \cite{CLR1,CLR2,CLR3} (see also \cite[Theorem 4.31]{FLW23}) which bounds the number of negative Schr\"odinger eigenvalues, for $d\ge 3$ and $(V-c)_-:= \max\{-V+c, 0\}\in L^{d/2}(\Omega)$, we have
\begin{equation}\label{eq:CLR}
    \widetilde N_p(c,-\Delta +V)\leq N(c, -\Delta + V) \le L_{0,d} \int_{\Omega} |(V-c)_-|^{d/2}\, \mathrm dx
\end{equation}
for some constant $L_{0,d}>0$.
\end{remark}

\subsection{Description of examples}
\label{sec:description-examples}
Since many of the phenomena we are studying are new, we will give a fairly large number of examples illustrating what can happen, to give a more complete picture of the possible behavior regarding the existence and nonexistence in scenarios beyond the existence theory in Theorem~\ref{thm:secondmain}. For ease of reference, we will now provide a complete list of the examples, which will all be given in Section~\ref{sec:examples}, together with a short description of what they show:
\begin{itemize}
    \item In Example~\ref{ex:compact-resolvent}, for any $p\in [1,\infty]$, we provide an example where minimizers exist due to the compact resolvent of the underlying Schrödinger operator, even though the domain is unbounded, thus generalizing the ``classical'' existence results for bounded domains (recall Remark~\ref{rem:assumptions_V}).
    \item In Example~\ref{ex:threshold-yes}, we give examples where the operator does not have compact resolvent but the threshold condition holds for any given $p\in [1,\infty]$ (and thus there exist solutions of both the relaxed and the strong formulations).
     \item In Example~\ref{ex:nopotential}, for any $p\in [1,\infty]$, we show that, for $1\leq p\leq \infty$, even when the threshold condition does not hold and $\optenergyrel{k}{p}$ is not attained, minimizing partitions (for the strong formulation $\optenergy{k}{p}$) may still exist. Note that in this case the cells of the minimizing partitions need not be connected.
    \item In Example~\ref{ex:strip}, we give a more sophisticated example of the previous phenomenon, where the cells of the minimizing partition \emph{are} connected, although only for $p=\infty$ and $d=2$. We also show in Remark~\ref{rem:super-strip} that the same conclusion holds for \emph{any} $\Omega$ in dimension $d \geq 3$, at least when $p=\infty$.
    \item In Example~\ref{ex:liberte}, we construct a minimizing partition when $p=\infty$ which is not an equipartition, in a case where the strict threshold condition \eqref{eq:strict-p} fails; this corresponds to Proposition \ref{prop:egalite}-(2).  
    \item In Example~\ref{ex:strip-ball}, we give an example where for some  the threshold condition~\eqref{eq:strict-p} fails, and there exists a spectral minimal partition where some cells admit ground states, but others do not.
    \item Finally, in Example~\ref{ex:fortschrittchen} (based on a domain and potential constructed in the auxiliary Example~\ref{ex:durchbruechchen}) we give a domain $\Omega$ and a potential where there are finitely many eigenvalues below the essential spectrum; for $k \geq 2$, the threshold condition \eqref{eq:strict-p} does not hold for any $1 \leq p \leq \infty$, neither $\optenergy{k}{p}(\Omega)$ nor $\optenergyrel{k}{p}(\Omega)$ admits a minimizer for $p < \infty$, but $\optenergyrel{k}{\infty}(\Omega)$ may or may not admit a minimizer, in dependence on whether there is a $k$-th eigenvalue $\lambda_k (\Omega)$ equal to $\infess (\Omega)$.  This shows Proposition \ref{prop:weak-and-strong}-(3). See also Remark~\ref{rmk:clrestimate}.
\end{itemize}

\subsection{Structure of the paper and strategy of the main proofs.}
Let us briefly describe the structure of the paper. We will commence with the proofs of Theorem~\ref{thm:firstmain} and Theorem~\ref{thm:secondmain} in Sections~\ref{sec:proof-firstmain} and~\ref{sec:existence}, respectively.

 The proof of Theorem~\ref{thm:firstmain}, establishing the energy threshold (upper bound) for the optimal energy $\optenergy{k}{p}(\Omega)$, is relatively direct, based on finding a suitable test partition. One uses the Persson characterization \eqref{eq:persson} of $\infess$ together with monotonicity and continuity results for $\infspec$ to find large concentric ring-type subsets $\omega_1,\ldots,\omega_k$ of $\Omega$, each of which has its infimum of the spectrum bounded from above by approximately $\infess (\Omega)$ (see \eqref{eq:olympic-rings} and~\eqref{eq:lordoftherings}).

The proof of existence of a solution of the relaxed problem in Section~\ref{sec:existence}, see Theorem~\ref{thm:existence}, which covers Theorem~\ref{thm:secondmain}, is far more delicate. The key idea is that we recover a notion of compactness for sequences of functions in $H^1_{0,V}(\Omega)$ with Rayleigh quotients below $\infess (\Omega)$. To exploit this, we still need a careful diagonal-type cut-off argument, or decomposition, see Lemma~\ref{lemma:existence_of_cutoff_functions}, which we then combine with the so-called IMS localization formula (see \cite[Section~2]{Si82}) to split each $k$-tuple of functions in the minimizing sequence for the relaxed problem into two parts, a part escaping to infinity and a part supported in an expanding ball around the origin, in such a way that the Rayleigh quotient of each part is manageable. We then need a careful analysis of the two parts to show that the part expanding out from the origin has a nontrivial limit; for each $\ell=0,\ldots, k-1$, the condition
\[
\optenergy{k}{p}(\Omega) < \left(\optenergy{\ell}{p}(\Omega)^p + (k-\ell) \infess(\Omega)^p\right)^{1/p}
\]
appearing in \eqref{eq:strict-p} prevents the limit from having $k-\ell$ trivial components. One can then show directly that this limit is indeed a minimizing $k$-tuple for the relaxed problem.

We then prove \txtr{a weaker version of} Proposition~\ref{prop:p} in Section~\ref{sec:continuity}, see Proposition~\ref{prop:p-continuity}, as it delivers us a continuity result needed in Theorem~\ref{thm:regularity} for the case $p=\infty$. This proof follows from rather standard ideas about comparing $p$-norms.

We then finally proceed with the proof of Theorem~\ref{thm:regularity} in Section~\ref{sec:regularity}, in which we also prove the remaining propositions from Section~\ref{sec:intro}. The proof of the former is a direct adaptation of \cite{CoTeVe05, HHT09}, where the domain $\Omega$ is bounded and $V\equiv 0$; we simply point out some differences to our situation. We also prove Propositions \ref{prop:weak-and-strong}, \ref{prop:egalite} and \ref{prop:aok} in this section, \txtr{where we also prove Proposition~\ref{prop:p} in its full form.}

As mentioned, the examples will be given, in the order indicated above, in the final Section~\ref{sec:examples}.

\section{Proof of Theorem~\ref{thm:firstmain}}
\label{sec:proof-firstmain}

We start with a couple of auxiliary results on the behavior of $\infspec (\omega)$ as a function of the domain $\omega \subset \Omega$.  Here and throughout we use $\subset$ to mean inclusion, which need not be strict.

In the sequel, we will use the notation
\begin{displaymath}
\begin{aligned}
    B_r \equiv B_r(0) &:= \{ x \in \R^d: |x| < r\},\\
    A_{r,R} \equiv A_{r,R}(0) &:= \{ x \in \R^d: r < |x| < R\}
\end{aligned}
\end{displaymath}
for open balls and annuli centered at $0$, respectively.

We will also use the alternative notation
\[
    \mathcal{R}_{V,\omega} (u) := \mathcal{R}_{V} (u)
\]
for the Rayleigh quotient of a function $u \in H^1_{0,V} (\omega) \setminus \{0\} \hookrightarrow H^1_{0,V} (\Omega)$, if we wish to emphasize the domain on which $u$ is defined.

We start with a basic domain monotonicity and convergence result.

\begin{lemma}\label{lem:domain-monotonicity}
    Under our standing assumptions on $\Omega$ and $V$, let $\omega_1 \subset \omega_2 \subset \Omega$ be any open sets, not necessarily connected. Then
    \begin{equation}
    \label{eq:domain-monotonicity}
    \infspec(\omega_1) \geq \infspec(\omega_2).
    \end{equation} 
    Moreover, for any $\omega \subset \Omega$ open, we have
    \begin{equation}
    \label{eq:to-infinity-and-beyond}
         \infspec (\omega \cap B_r) \searrow \infspec (\omega)  \quad \text{ as }r\to \infty 
    \end{equation}
    and
    \begin{equation}
    \label{eq:from-persson-with-love}
       \infspec (\omega \setminus \overline{B_r}) \nearrow \Sigma (\omega)  \quad \text{ as }r\to \infty. 
    \end{equation}
\end{lemma}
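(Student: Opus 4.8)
The plan is to prove the three assertions in order, since each builds on the previous one. For the domain monotonicity \eqref{eq:domain-monotonicity}, the key point is that the inclusion $\omega_1 \subset \omega_2$ induces a natural inclusion $H^1_{0,V}(\omega_1) \hookrightarrow H^1_{0,V}(\omega_2)$ given by extension by zero: indeed, $C^\infty_c(\omega_1) \subset C^\infty_c(\omega_2)$, and since $H^1_{0,V}(\omega_i)$ is the closure of $C^\infty_c(\omega_i)$ for the norm $\|u\|_{H^1(\omega_i)} + \|\sqrt V u\|_{L^2(\omega_i)}$ (as recalled in the excerpt), extension by zero is an isometry onto a closed subspace. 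Since the Rayleigh quotient $\mathcal R_V$ is preserved under this extension, the infimum defining $\lambda(\omega_1)$ is taken over a subset of the admissible functions for $\lambda(\omega_2)$, which gives $\lambda(\omega_1) \ge \lambda(\omega_2)$.

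For \eqref{eq:to-infinity-and-beyond}, monotonicity of $r \mapsto \lambda(\omega \cap B_r)$ (non-increasing) follows immediately from \eqref{eq:domain-monotonicity}, so the limit exists and is $\ge \lambda(\omega)$. For the reverse inequality I would argue by density: given $\varepsilon > 0$, pick $u \in C^\infty_c(\omega)$ with $\mathcal R_V(u) < \lambda(\omega) + \varepsilon$ (using that $C^\infty_c(\omega)$ is dense in $H^1_{0,V}(\omega)$, hence Rayleigh quotients of such $u$ approximate the infimum). Since $\supp u$ is compact, $\supp u \subset B_r$ for all $r$ large enough, so $u$ is admissible for $\lambda(\omega \cap B_r)$ and hence $\lambda(\omega \cap B_r) \le \mathcal R_V(u) < \lambda(\omega) + \varepsilon$ for all such $r$. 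Letting $r \to \infty$ and then $\varepsilon \to 0$ gives the claim. One should check the minor technical point that $C^\infty_c(\omega)$ functions are genuinely dense for the Rayleigh quotient, i.e. that one can approximate the infimum by compactly supported smooth functions — this is where $V \in L^\infty_{\mathrm{loc}}$ and the cutoff argument mentioned in the excerpt enter, but it is routine.

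For \eqref{eq:from-persson-with-love}, the quantity $\lambda(\omega \setminus \overline{B_r})$ is non-decreasing in $r$ by \eqref{eq:domain-monotonicity} applied to $\omega \setminus \overline{B_R} \subset \omega \setminus \overline{B_r}$ for $r < R$, so the limit exists. The identification of this limit with $\Sigma(\omega)$ is essentially a reformulation of the Persson characterization \eqref{eq:persson}: on one hand each $\mathcal K = \overline{B_r} \cap \overline\omega$ is a compact subset of $\overline\omega$, and the functions $u \in H^1_{0,V}(\omega)$ with $\supp u \cap \mathcal K = \emptyset$ are exactly (a dense subset of, modulo the same $C^\infty_c$ approximation) those in $H^1_{0,V}(\omega \setminus \overline{B_r})$, giving $\lambda(\omega \setminus \overline{B_r}) = \lambda(\omega \setminus \mathcal K) \le \Sigma(\omega)$; on the other hand, an arbitrary compact $\mathcal K \Subset \overline\omega$ is contained in some $\overline{B_r}$, so $\lambda(\omega \setminus \mathcal K) \le \lambda(\omega \setminus \overline{B_r}) \le \lim_{s\to\infty} \lambda(\omega \setminus \overline{B_s})$, and taking the supremum over $\mathcal K$ in \eqref{eq:persson} yields $\Sigma(\omega) \le \lim_{r\to\infty}\lambda(\omega \setminus \overline{B_r})$.

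The main obstacle — really the only non-bookkeeping point — is the density/approximation step: verifying that the infimum in \eqref{eq:inf-spec} and in the Persson formula \eqref{eq:persson} is unchanged if one restricts to $u \in C^\infty_c$ with support avoiding the relevant compact set, and matching the ``$\supp u \cap \mathcal K = \emptyset$'' constraint in \eqref{eq:persson} with the space $H^1_{0,V}(\omega \setminus \overline{B_r})$ under extension by zero. Once that is in place, everything else is monotonicity plus a one-line test-function argument. I would handle the density point once, as a small preliminary observation, and then reuse it for both \eqref{eq:to-infinity-and-beyond} and \eqref{eq:from-persson-with-love}.
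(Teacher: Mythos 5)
Your proposal is correct and follows essentially the same route as the paper's proof: domain monotonicity via the natural inclusion of $H^1_{0,V}$-spaces, the limit \eqref{eq:to-infinity-and-beyond} by testing with compactly supported smooth functions, and \eqref{eq:from-persson-with-love} by comparing $\overline{B_r}$ with arbitrary compact sets in Persson's formula \eqref{eq:persson}. The only place you add something is in explicitly flagging the density/approximation step needed to identify the constrained infimum in \eqref{eq:persson} with $\lambda(\omega\setminus\overline{B_r})$; the paper treats this as built into the notation $\lambda(\omega\setminus\mathcal K)$ in \eqref{eq:persson} and does not comment further, but your observation that it reduces to density of $C^\infty_c$ is the right justification.
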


\begin{proof}
    The inequality \eqref{eq:domain-monotonicity} is a direct consequence of the variational characterization \eqref{eq:inf-spec} of $\infspec$, together with the natural inclusion $H^1_{0,V}(\omega_1) \subset H^1_{0,V}(\omega_2)$.

    For \eqref{eq:to-infinity-and-beyond}, we note that the inequality $\infspec (\omega \cap B_r) \ge \infspec (\omega)$ for all $r>0$ is an immediate consequence of \eqref{eq:domain-monotonicity}. The other inequality follows from the variational characterization and the density (by construction) of $C_c^\infty (\omega)$ in $H^1_{0,V}(\omega)$: in particular, in \eqref{eq:inf-spec}, we may equally infimize over $C_c^\infty (\omega)$ in place of $H^1_{0,V}(\omega)$. But now, any $u \in C_c^\infty (\omega)$ belongs to $H^1_{0,V}(\omega \cap B_r)$ for all $r=r(u)>0$ large enough. The claim follows.

    Finally, for \eqref{eq:from-persson-with-love}, since $\overline{B_r}$ is compact, the characterization \eqref{eq:persson} directly implies that
    \begin{displaymath}
        \infspec (\omega \setminus \overline{B_r}) \le \infess({\omega})
    \end{displaymath}
    for all $r>0$. Now let $\mathcal{K}_n$ be any maximizing sequence of domains in \eqref{eq:persson}. For each $n \in \N$, there exists $r_n>0$ such that $\mathcal{K}_n \subset \overline{B_{r_n}}$; hence, by \eqref{eq:domain-monotonicity},
    \begin{displaymath}
        \infspec (\omega \setminus \overline{B_{r_n}}) \ge \infspec (\omega \setminus \mathcal{K}_n) \to \infess (\omega).
    \end{displaymath}
    Since the function $r \mapsto \infspec (\omega \setminus \overline{B_r})$ is monotonically increasing in $r$ by \eqref{eq:domain-monotonicity}, the claim follows.
\end{proof}

\begin{lemma}
\label{lem:annulus-limit}
    Let $\omega \subset \Omega$ be any open subset of $\Omega$. Then, for any $r>0$,
    \begin{displaymath}
        \lim_{R \to \infty} \infspec (\omega \cap A_{r,R}) = \infspec (\omega \setminus \overline{B_r}) \leq \infess (\omega).
    \end{displaymath}
    In particular, for any $r>0$ and any $\varepsilon > 0$ there exists $R=R(r,\varepsilon) > r$ such that
    \begin{equation}\label{eq:annulus-control}
        \infspec (\omega \cap A_{r,R}) < \infess (\omega) + \varepsilon.
    \end{equation}
\end{lemma}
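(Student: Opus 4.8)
The plan is to reduce the limit statement to the monotone convergence result \eqref{eq:from-persson-with-love} already established in Lemma~\ref{lem:domain-monotonicity}, together with a second application of the ``exhaustion by balls'' idea behind \eqref{eq:to-infinity-and-beyond}. First I would observe that, for fixed $r>0$, the sets $\omega \cap A_{r,R}$ increase to $\omega \setminus \overline{B_r}$ as $R \to \infty$, so by domain monotonicity \eqref{eq:domain-monotonicity} the quantity $\infspec(\omega \cap A_{r,R})$ is non-increasing in $R$ and bounded below by $\infspec(\omega \setminus \overline{B_r})$; hence the limit exists and satisfies $\lim_{R\to\infty}\infspec(\omega\cap A_{r,R}) \ge \infspec(\omega\setminus\overline{B_r})$.

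For the reverse inequality, I would argue exactly as in the proof of \eqref{eq:to-infinity-and-beyond}: using the density of $C_c^\infty(\omega\setminus\overline{B_r})$ in $H^1_{0,V}(\omega\setminus\overline{B_r})$, it suffices to infimize the Rayleigh quotient in \eqref{eq:inf-spec} over test functions $u \in C_c^\infty(\omega\setminus\overline{B_r})$. Any such $u$ has compact support, so $\supp u \subset A_{r,R}$ for all $R$ large enough (depending on $u$), and therefore $u \in H^1_{0,V}(\omega\cap A_{r,R})$ for such $R$; this gives $\infspec(\omega\cap A_{r,R}) \le \mathcal{R}_V(u)$ for $R$ large, and taking first $R\to\infty$ and then the infimum over $u$ yields $\lim_{R\to\infty}\infspec(\omega\cap A_{r,R}) \le \infspec(\omega\setminus\overline{B_r})$. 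Combining the two inequalities gives the claimed identity. The bound $\infspec(\omega\setminus\overline{B_r}) \le \infess(\omega)$ is immediate from \eqref{eq:from-persson-with-love} (it is the content of the first display in the proof of that limit), since $r \mapsto \infspec(\omega\setminus\overline{B_r})$ increases to $\infess(\omega)$.

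Finally, \eqref{eq:annulus-control} is just an unwinding of the definition of the limit: since $\infspec(\omega\cap A_{r,R}) \to \infspec(\omega\setminus\overline{B_r}) \le \infess(\omega)$ as $R\to\infty$, for any $\varepsilon>0$ there is $R=R(r,\varepsilon)>r$ with $\infspec(\omega\cap A_{r,R}) < \infspec(\omega\setminus\overline{B_r}) + \varepsilon \le \infess(\omega)+\varepsilon$.

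There is no real obstacle here; the statement is a routine consequence of Lemma~\ref{lem:domain-monotonicity} and the cutoff/density argument used in its proof. The only point requiring a modicum of care is the order of limits in the reverse inequality --- one must fix the test function $u$ first, then let $R\to\infty$, and only afterwards pass to the infimum over $u$ --- but this is exactly the standard maneuver already employed for \eqref{eq:to-infinity-and-beyond}, so I would simply refer back to that argument rather than repeat it in full.
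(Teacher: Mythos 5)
Your argument is correct and is essentially the same as the paper's proof: both use domain monotonicity for one inequality and the density of $C_c^\infty(\omega\setminus\overline{B_r})$ in $H^1_{0,V}(\omega\setminus\overline{B_r})$ (together with the compact-support observation that any such test function sits inside some $\omega\cap A_{r,R}$) for the other, then invoke \eqref{eq:from-persson-with-love} for the bound by $\infess(\omega)$. The paper phrases the density step via a minimizing sequence $\varphi_n$ with associated radii $R_n$ rather than fixing $u$ and letting $R\to\infty$ first, but that is a cosmetic difference and your handling of the order of limits is sound.
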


\begin{proof}
     By Lemma~\ref{lem:domain-monotonicity}, we immediately obtain that $R \mapsto \infspec (\omega \cap A_{r,R})$ is a decreasing function bounded from below by $\infspec (\omega \setminus \overline{B_r})$. 
    
    On the other hand, by density of $C_c^\infty$ is dense in $H^1_{0,V}$, there exist $\varphi_n \in C_c^\infty (\omega \setminus \overline{B_r})$ such that $\mathcal{R}_{V,\omega \setminus \overline{B_r}} (\varphi_n) \to \infspec (\omega \setminus \overline{B_r})$; since, for every $n\in\N$, there exists $R_n > 0$ such that $\varphi_n \in C_c^\infty (\omega \cap A_{r,R_n})$, it follows immediately that
    \begin{displaymath}
        \infspec (\omega \cap A_{r,R_n}) \leq \mathcal{R}_{V,\omega \cap A_{r,R_n}}(\varphi_n) = \mathcal{R}_{V, \omega \setminus \overline{B_r}}(\varphi_n) \to \infspec (\omega \setminus \overline{B_r}),
    \end{displaymath}
    whence there is equality in the limit.

    The inequality $\infspec (\omega \setminus \overline{B_r}) \leq \infess (\omega)$ is a direct consequence of \eqref{eq:from-persson-with-love}. The inequality \eqref{eq:annulus-control} now follows immediately from the other parts of the lemma.
\end{proof}

\begin{proof}[Proof of Theorem \ref{thm:firstmain}]
We keep the notation from above. We also assume that $\Sigma(\Omega) < \infty$, otherwise there is nothing to show.

As noted, by \eqref{eq:from-persson-with-love}, we know that
\[
    \Sigma(\Omega) = \lim_{r\to \infty} \lambda(\Omega\setminus \overline{B_r}).
\]
Now fix $r>0$. Applying Lemma~\ref{lem:annulus-limit} to $\omega = \Omega \setminus \overline{B_r}$, we have $\lambda(\Omega\setminus \overline{B_r}) = \lim_{R\to \infty} \lambda(\Omega \cap A_{r,R})$, and we can find $R>0$ such that
\begin{equation}
|\lambda(\Omega\setminus \overline{B_r}) - \lambda(A_{r,R})| \le \frac{1}{r} .
\end{equation}
It follows directly that we can find a sequence of annular regions $\Omega_n := \Omega \cap A_{r_n, R_n}$ with $0 < r_n < R_n < r_{n+1}$ for all $n \in \N$, such that
\begin{equation}
    \Sigma(\Omega) = \lim_{n\to \infty} \lambda(\Omega_n).
\end{equation}

We now define
\begin{equation}
\label{eq:olympic-rings}
    \omega_i := \Omega_i \cup \Omega_{k+i} \cup \Omega_{2k+i} \cup \ldots = \bigcup_{j=0}^\infty \Omega_{jk+i}
\end{equation}
to be an infinite union of disjoint concentric ``rings'' in $\Omega$, for each $i=1,\ldots,k$. Then the $\omega_i$ are pairwise disjoint open sets, and thus form an admissible $k$-partition of $\Omega$. For each $i$, there are two possibilities: either $\infspec (\omega_i)$ is attained on some ring $\Omega_n$, or else
\begin{displaymath}
    \infspec (\omega_i) = \lim_{j \to \infty} \infspec(\Omega_{jk+i}) = \infess (\Omega).
\end{displaymath}
In the former case, the fact that $\infspec (\Omega_{jk+i})$ must be minimal at $n$ means that in this case
\begin{equation}
\label{eq:lordoftherings}
    \infspec (\omega_i) \le \lim_{j \to \infty} \infspec(\Omega_{jk+i}) = \infess (\Omega).
\end{equation}
Hence, either way, we have $\infspec (\omega_i) \le \infess (\Omega)$ for all $i$. In particular, this proves \eqref{eq:optimal-inequality-infty} (i.e. the case $p=\infty$).

We now consider the case $1 \le p < \infty$. Let $\partition := (\tilde\omega_1, \ldots, \tilde\omega_{k-1}) \in \partitionset{k-1}$ be an arbitrary ($k-1$)-partition. By \eqref{eq:to-infinity-and-beyond},
\begin{equation}
    \Lambda_{k-1,p}(\partition) = \lim_{r\to \infty} \Lambda_{k-1,p}(\tilde\omega_1 \cap B_r, \ldots, \tilde\omega_{k-1} \cap B_r).
\end{equation}
We fix $\varepsilon>0$ arbitrary and choose $r_0>0$ such that
\begin{displaymath}
    \Lambda_{k-1,p}(\tilde\omega_1 \cap B_{r_0}, \ldots, \tilde\omega_{k-1} \cap B_{r_0})^p <\Lambda_{k-1,p}(\partition)^p + \varepsilon.
\end{displaymath}
We then define $\omega_k := \Omega \setminus \overline{B_{r_0}}$; by Lemma~\ref{lem:domain-monotonicity}, we have that $\infspec (\omega_k) \le \infess (\Omega)$. It follows that 
\begin{displaymath}
\tilde{\mathcal{P}}_{r_0}:=(\tilde\omega_1 \cap B_{r_0}, \ldots, \tilde\omega_{k-1} \cap B_{r_0}, \omega_k)
\end{displaymath}
is a $k$-partition and satisfies
\begin{displaymath}
    \optenergy{k}{p}(\Omega)^p \leq \Lambda_{k,p}(\tilde{\mathcal{P}}_{r_0})= \Lambda_{k-1,p}(\tilde\omega_1 \cap B_{r_0}, \ldots, \tilde\omega_{k-1} \cap B_{r_0})^p +\lambda(\omega_k)<\Lambda_{k-1,p}(\partition)^p + \varepsilon + \infess (\Omega)^p.
\end{displaymath}
Since $\varepsilon>0$ and $(\omega_1,\ldots, \omega_{k-1})\in \partitionset{k-1}$ were arbitrary we infer that
\begin{equation}
    \optenergy{k}{p}(\Omega)^p \le 
    \optenergy{k-1}{p}(\Omega)^p +  \infess(\Omega)^p.
\end{equation}

Recalling that, by convention,  $\optenergy{0}{p}(\Omega)=0$, this also implies that
\begin{equation}
\label{eq:threshold-best-1}
    \optenergy{k-1}{p}(\Omega)^p +  \infess(\Omega)^p=\min_{\ell=0,\ldots, k-1} \{\optenergy{\ell}{p}(\Omega)^p + (k-\ell) \infess(\Omega)^p\}\leq k\infess(\Omega)^p. \qedhere
\end{equation}
\end{proof}

\section{Proof of Theorem~\ref{thm:secondmain}: Existence}
\label{sec:existence}

We will prove the existence result, Theorem~\ref{thm:secondmain}, in the following form:

\begin{theorem}
\label{thm:existence}
Given $1 \le p \le \infty$, suppose we have
\begin{equation}
\label{eq:compactness_condition_p}
    [\optenergyrel{k}{p} (\Omega)]^p < \min_{\ell=0,\ldots, k-1} \{  \optenergyrel{\ell}{p}(\Omega)^p +(k-\ell)\infess(\Omega)^p \}=\optenergyrel{k-1}{p}(\Omega)^p +\infess(\Omega)^p ,
\end{equation}
(if $1 \le p < \infty$), or
\begin{equation}
\label{eq:compactness_condition_infty}
    \optenergyrel{k}{\infty} (\Omega) < \infess (\Omega)
\end{equation}
(if $p = \infty$). Let $U_n = (u_{1,n},\ldots,u_{k,n}) \in H^1_{0,V} (\Omega)^k$ form a minimizing sequence for $\optenergyrel{k}{p} (\Omega)$, where each $u_{i,n}$ is $L^2$-normalized. Then there exists $U = (u_1,\ldots,u_k) \in H^1_{0,V}(\Omega{{\setminus \{0\}}})^k$ such that, up to a subsequence, $U_n {{\rightharpoonup}} U$ in $H^1_{0,V}(\Omega)$, $u_i\cdot u_j\equiv 0$ for all $i\neq j$, and
\begin{displaymath}
    \optenergyrel{k}{p} (\Omega) = \energyrel{k}{p} (U)>0.
\end{displaymath}
\txtr{
Moreover:
\begin{enumerate}
    \item[(a)] if $p=\infty$ and $\mathcal R_V(u_{i,n}) \to \optenergyrel{k}{\infty} (\Omega)$ for some $i=1,\ldots, k$, then $u_{i,n} \to u_i$ in $H^1_{0,V}(\Omega)$, and $\|u_i\|_2 = 1$. 
    \item[(b)] if $1 \le p < \infty$, then  $u_{i,n} \to u_i$ in $H^1_{0,V}(\Omega)$ and $\|u_i\|_2 = 1$ for every $i=1,\ldots, k$.
\end{enumerate}
}
\end{theorem}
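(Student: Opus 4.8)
The plan is to run a concentration-compactness argument on a minimizing sequence: extract a weak limit, use the threshold hypothesis \eqref{eq:compactness_condition_p}--\eqref{eq:compactness_condition_infty} to exclude escape of $L^2$-mass to infinity, and then verify that the weak limit (after a harmless renormalization) is an actual minimizer and that the convergence is strong. So let $U_n=(u_{1,n},\dots,u_{k,n})$, with $\|u_{i,n}\|_2=1$ and $u_{i,n}u_{j,n}=0$ a.e.\ for $i\neq j$, be a minimizing sequence. Boundedness of $\energyrel{k}{p}(U_n)$ forces boundedness of $a_V(u_{i,n})=\mathcal{R}_V(u_{i,n})$ for each $i$, hence of $(U_n)$ in $H^1_{0,V}(\Omega)^k$; passing to a subsequence, $u_{i,n}\rightharpoonup u_i$ weakly in $H^1_{0,V}(\Omega)$, strongly in $L^2_{\mathrm{loc}}(\Omega)$ and a.e.\ (so $u_iu_j=0$ a.e.\ for $i\neq j$), and $a_V(u_{i,n})\to L_i\ge 0$ with $\sum_i L_i^p=\optenergyrel{k}{p}(\Omega)^p$ (resp.\ $\max_i L_i=\optenergyrel{k}{\infty}(\Omega)$).

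The first key step, and the only place the strict inequality enters, is to show that \emph{each limiting energy lies below $\infess(\Omega)$}. For $1\le p<\infty$ and any fixed $j$, the $(k-1)$-tuple $(u_{i,n})_{i\neq j}$ is admissible, so $\optenergyrel{k-1}{p}(\Omega)^p\le\sum_{i\neq j}a_V(u_{i,n})^p\to\optenergyrel{k}{p}(\Omega)^p-L_j^p$, whence $L_j^p\le\optenergyrel{k}{p}(\Omega)^p-\optenergyrel{k-1}{p}(\Omega)^p<\infess(\Omega)^p$ by \eqref{eq:compactness_condition_p}; for $p=\infty$, \eqref{eq:compactness_condition_infty} gives $L_j\le\optenergyrel{k}{\infty}(\Omega)<\infess(\Omega)$ directly. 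So $L_j<\infess(\Omega)$ for all $j$.

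The heart of the proof is the compactness step. I would use the cutoffs of Lemma~\ref{lemma:existence_of_cutoff_functions}: a diagonal sequence $R_n\to\infty$ and smooth $\chi_{R_n},\eta_{R_n}\ge0$ with $\chi_{R_n}^2+\eta_{R_n}^2\equiv1$, $\chi_{R_n}\equiv1$ on $B_{R_n}$, $\supp\chi_{R_n}\subset B_{2R_n}$, $|\nabla\chi_{R_n}|,|\nabla\eta_{R_n}|\le C/R_n$, chosen so that the ``inner'' parts $v_{i,n}:=\chi_{R_n}u_{i,n}$ satisfy $v_{i,n}\to u_i$ strongly in $L^2(\Omega)$ (a diagonal argument: $\chi_Ru_{i,n}\to\chi_Ru_i$ in $L^2$ as $n\to\infty$ for fixed $R$, since these are supported in the bounded set $B_{2R}$, and $\chi_Ru_i\to u_i$ in $L^2$ as $R\to\infty$). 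The IMS localization formula then gives
\[
a_V(u_{i,n})=a_V(v_{i,n})+a_V(w_{i,n})-\int_\Omega\big(|\nabla\chi_{R_n}|^2+|\nabla\eta_{R_n}|^2\big)|u_{i,n}|^2=a_V(v_{i,n})+a_V(w_{i,n})+O(R_n^{-2}),
\]
where $w_{i,n}:=\eta_{R_n}u_{i,n}$ is supported in $\Omega\setminus\overline{B_{R_n}}$; hence $a_V(w_{i,n})\ge\infspec(\Omega\setminus\overline{B_{R_n}})\|w_{i,n}\|_2^2$, and since $\infspec(\Omega\setminus\overline{B_{R_n}})\to\infess(\Omega)$ by \eqref{eq:from-persson-with-love} and $\|w_{i,n}\|_2^2=1-\|v_{i,n}\|_2^2\to1-\|u_i\|_2^2$, while $a_V(u_i)\le\liminf_n a_V(v_{i,n})$ by weak lower semicontinuity, passing to the limit yields $L_i\ge a_V(u_i)+\infess(\Omega)\big(1-\|u_i\|_2^2\big)$. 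In particular $L_i\ge\infess(\Omega)$ whenever $u_i=0$ or $\mathcal{R}_V(u_i)\ge\infess(\Omega)$, while $L_i\ge\mathcal{R}_V(u_i)$ whenever $u_i\neq0$ and $\mathcal{R}_V(u_i)<\infess(\Omega)$. Setting $A:=\{i:\,u_i\neq0,\ \mathcal{R}_V(u_i)<\infess(\Omega)\}$ and $\ell:=\#A$, the tuple $(u_i/\|u_i\|_2)_{i\in A}$ is admissible, so
\[
\optenergyrel{k}{p}(\Omega)^p=\sum_{i=1}^k L_i^p\ \ge\ \sum_{i\in A}\mathcal{R}_V(u_i)^p+(k-\ell)\,\infess(\Omega)^p\ \ge\ \optenergyrel{\ell}{p}(\Omega)^p+(k-\ell)\,\infess(\Omega)^p
\]
(with $\max$ in place of $\sum$ when $p=\infty$). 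If $\ell\le k-1$, the right-hand side is at least $\min_{0\le m\le k-1}\big(\optenergyrel{m}{p}(\Omega)^p+(k-m)\infess(\Omega)^p\big)=\optenergyrel{k-1}{p}(\Omega)^p+\infess(\Omega)^p>\optenergyrel{k}{p}(\Omega)^p$ by \eqref{eq:compactness_condition_p}, a contradiction (for $p=\infty$: if $\ell\le k-1$ then some $L_i\ge\infess(\Omega)$, so $\max_i L_i\ge\infess(\Omega)$, against \eqref{eq:compactness_condition_infty}); therefore $\ell=k$, i.e.\ every $u_i\neq0$ and $\mathcal{R}_V(u_i)<\infess(\Omega)$.

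To conclude, $(u_i/\|u_i\|_2)_{i=1}^k$ is now admissible, so $\sum_i\mathcal{R}_V(u_i)^p\ge\optenergyrel{k}{p}(\Omega)^p=\sum_i L_i^p$; combined with $L_i\ge\mathcal{R}_V(u_i)\|u_i\|_2^2+\infess(\Omega)\big(1-\|u_i\|_2^2\big)\ge\mathcal{R}_V(u_i)$ (the last inequality because $\mathcal{R}_V(u_i)<\infess(\Omega)$), this forces $L_i=\mathcal{R}_V(u_i)$ for every $i$ and then, since $\infess(\Omega)-\mathcal{R}_V(u_i)>0$, also $\|u_i\|_2=1$. Hence $u_{i,n}\to u_i$ strongly in $L^2(\Omega)$ (weak convergence with matching norms) and $a_V(u_{i,n})\to L_i=a_V(u_i)$; together with weak convergence in the Hilbert space $H^1_{0,V}(\Omega)$ this upgrades to $u_{i,n}\to u_i$ strongly in $H^1_{0,V}(\Omega)$. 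Thus $U=(u_1,\dots,u_k)$ is admissible, $\optenergyrel{k}{p}(\Omega)=\lim_n\energyrel{k}{p}(U_n)=\energyrel{k}{p}(U)$, and $\energyrel{k}{p}(U)>0$ since each $\mathcal{R}_V(u_i)>0$ (a nonzero $H^1_0(\Omega)$-function has nonvanishing gradient). The case $p=\infty$ runs identically, with $\max$ in place of $\sum$ and using $\max_i L_i=\optenergyrel{k}{\infty}(\Omega)$. The main obstacle is precisely the compactness step: ruling out escape of mass to infinity requires the IMS/Persson decomposition (which produces a piece at infinity whose Rayleigh quotient is essentially $\ge\infess(\Omega)$) together with the threshold hypothesis, used in the bookkeeping form $\optenergyrel{\ell}{p}(\Omega)^p+(k-\ell)\infess(\Omega)^p\ge\optenergyrel{k-1}{p}(\Omega)^p+\infess(\Omega)^p$, which forbids the weak limit from shedding $k-\ell\ge1$ full components to infinity; everything else (lower semicontinuity, the equality case of the elementary inequalities, Hilbert-space compactness) is soft.
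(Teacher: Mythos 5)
Your argument follows the same overall strategy as the paper's proof (IMS cutoff decomposition via Lemma~\ref{lemma:existence_of_cutoff_functions}, Persson's characterization \eqref{eq:from-persson-with-love} of $\infess(\Omega)$ for the outer piece, and the threshold hypothesis to forbid components escaping to infinity), but your bookkeeping is organized differently and, for $1\le p<\infty$, is arguably cleaner: instead of the paper's Steps 5--6, which track $\min\{\mathcal{R}_V(\varphi_n u_{i,n}),\mathcal{R}_V(\psi_n u_{i,n})\}$ and show that the inner piece always realizes the minimum, you compress everything into the single inequality $L_i\ge a_V(u_i)+\infess(\Omega)\bigl(1-\|u_i\|_2^2\bigr)$ and read off nontriviality, minimality, $L_i=\mathcal{R}_V(u_i)$ and $\|u_i\|_2=1$ from its equality cases. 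For $1\le p<\infty$ this is complete and correct: the termwise identity $L_i=\mathcal{R}_V(u_i)$ really does follow from $\sum_i L_i^p=\sum_i\mathcal{R}_V(u_i)^p$ together with $L_i\ge\mathcal{R}_V(u_i)\ge0$, and then $\|u_i\|_2=1$ and strong $H^1_{0,V}$-convergence follow as you say.

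The gap is in your last sentence: for $p=\infty$ the endgame does \emph{not} ``run identically''. From $\max_i L_i=\max_i\mathcal{R}_V(u_i)$ and $L_i\ge\mathcal{R}_V(u_i)$ one cannot conclude $L_i=\mathcal{R}_V(u_i)$ for every $i$ (e.g. $L=(1,\,0.9)$, $\mathcal{R}=(1,\,0.5)$). Consequently, for an index $i$ with $\mathcal{R}_V(u_i)<\max_j\mathcal{R}_V(u_j)$, your inequality $L_i\ge\mathcal{R}_V(u_i)\|u_i\|_2^2+\infess(\Omega)\bigl(1-\|u_i\|_2^2\bigr)$ is perfectly compatible with $\|u_i\|_2<1$ and $L_i>\mathcal{R}_V(u_i)$: a non-maximal component can shed some $L^2$-mass to infinity without raising the $\ell^\infty$-energy. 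Your argument still produces a minimizer for $p=\infty$ (the renormalized weak limit is admissible and attains $\optenergyrel{k}{\infty}(\Omega)$), but the asserted conclusions $\|u_i\|_2=1$ and $U_n\to U$ strongly in $H^1_{0,V}(\Omega)^k$ are not established for components that do not attain the maximum. (For what it is worth, the paper's own Step~8 for $p=\infty$ rests on the strict inequality $\mathcal{R}_V(u_1)\|u_1\|_2^2+\infess(\Omega)(1-\|u_1\|_2^2)>\optenergyrel{k}{\infty}(\Omega)$, which is likewise only transparent when $u_1$ attains the max; so the point you glossed over is genuinely the delicate one, and it needs an additional argument rather than ``the same with $\max$ in place of $\sum$''.)
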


The existence of such a minimizing sequence follows since $0 \le \optenergyrel{k}{p} (\Omega) < \infty$. In particular, the proposition does in fact yield the existence of a minimizer $U$ for $\optenergyrel{k}{p}(\Omega)$ (the extra statements beyond existence will be needed in Sections~\ref{sec:continuity} \txtr{ and~\ref{sec:regularity}; we note that the case $p=\infty$ is far more delicate}). The proof uses a cut-off argument based on functions with the following properties.

\begin{lemma}\label{lemma:existence_of_cutoff_functions}
Given $n\in \N$, there exist $\varphi_n,\psi_n\in C^\infty(\R^d)$ such that
\[
\varphi_n^2+\psi_n^2\equiv 1 \text{ a.e. in }\Omega,
\]
\[
\begin{cases}
\varphi_n=1,\ \psi_n=0 \text{ in } B_{n}\\
\varphi_n=0,\ \psi_n=1 \text{ in } \R^d\setminus B_{2n}\\
|\nabla \varphi_n|,\ |\nabla \psi_n| \leq C/n 
\end{cases} \qquad 
\]
\end{lemma}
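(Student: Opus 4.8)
The plan is to realise $\varphi_n$ and $\psi_n$ as the cosine and sine of one radial angular function, so that the identity $\varphi_n^2+\psi_n^2\equiv 1$ is built in from the start; this is the classical IMS-type construction. First I would fix, once and for all, a profile function $\eta\in C^\infty(\R)$ which is non-decreasing, satisfies $\eta\equiv 0$ on $(-\infty,1]$ and $\eta\equiv 1$ on $[2,\infty)$, and takes values in $[0,1]$; such an $\eta$ is obtained by mollifying a piecewise-linear function, and the constant $M:=\|\eta'\|_{L^\infty(\R)}$ is finite and independent of $n$. Then I would set $\theta_n(r):=\tfrac{\pi}{2}\,\eta(r/n)$ for $r\ge 0$ and define
\[
\varphi_n(x):=\cos\bigl(\theta_n(|x|)\bigr),\qquad \psi_n(x):=\sin\bigl(\theta_n(|x|)\bigr),\qquad x\in\R^d.
\]

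The remaining work is to check the four listed properties, all of which are routine. The Pythagorean identity is immediate. Since $\eta(r/n)=0$ for $r\le n$, we have $\theta_n\equiv 0$ on $[0,n]$, so $\varphi_n\equiv 1$ and $\psi_n\equiv 0$ on $B_n$; since $\eta(r/n)=1$ for $r\ge 2n$, we have $\theta_n\equiv\pi/2$ on $[2n,\infty)$, so $\varphi_n\equiv 0$ and $\psi_n\equiv 1$ on $\R^d\setminus B_{2n}$. For the gradient bound, at $x\neq 0$ one computes $\nabla\varphi_n(x)=-\sin(\theta_n(|x|))\,\theta_n'(|x|)\,\tfrac{x}{|x|}$ and analogously for $\psi_n$, whence
\[
|\nabla\varphi_n(x)|,\ |\nabla\psi_n(x)|\ \le\ |\theta_n'(|x|)|\ =\ \tfrac{\pi}{2n}\bigl|\eta'(|x|/n)\bigr|\ \le\ \tfrac{\pi M}{2n}=:\tfrac{C}{n},
\]
while at points where $\theta_n$ is locally constant (in particular on $B_n$) both gradients vanish, so the bound holds on all of $\R^d$.

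The one point I would take slight care over is the global smoothness of $\varphi_n$ and $\psi_n$, since $x\mapsto|x|$ is not smooth at the origin. This causes no real difficulty: $\theta_n$ is constant near $r=0$, so $\varphi_n$ and $\psi_n$ are constant (equal to $1$ and $0$ respectively) on a neighbourhood of the origin and are therefore $C^\infty$ there; on $\R^d\setminus\{0\}$ the function $|x|$ is smooth and $\theta_n\in C^\infty([0,\infty))$, so $\varphi_n,\psi_n\in C^\infty(\R^d\setminus\{0\})$; patching these gives $\varphi_n,\psi_n\in C^\infty(\R^d)$. There is no genuine obstacle here — the lemma simply packages a standard partition-of-unity/IMS cutoff adapted to the quadratic form $\varphi^2+\psi^2$, and the argument above is complete and elementary.
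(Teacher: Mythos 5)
Your proof is correct, and it takes a genuinely different (and in one respect cleaner) route than the paper. The paper fixes a smooth $\varphi:\R^+\to[0,1]$ with $\varphi\equiv 1$ on $(-\infty,1]$ and $\varphi\equiv 0$ on $[2,\infty)$, sets $\psi:=\sqrt{1-\varphi^2}$, and justifies $\psi\in C^\infty$ by noting that $\varphi^2$ flattens to $1$ at $x=1$ to infinite order. You instead realise $\varphi_n,\psi_n$ as the cosine and sine of a single radial angle $\theta_n(|x|)=\tfrac{\pi}{2}\eta(|x|/n)$, so the identity $\varphi_n^2+\psi_n^2\equiv 1$ is structural and smoothness of both functions is immediate from smoothness of $\eta$. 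This sidesteps a real subtlety in the paper's route: a non-negative smooth function vanishing to infinite order need not admit a smooth square root in general (the classical counterexample being of the form $e^{-1/x^2}\sin^2(1/x)$), so the paper's one-line justification tacitly relies on choosing $\varphi$ monotone on $[1,2]$, or on something equivalent; your trigonometric parametrisation makes this issue disappear entirely. Both constructions yield the same gradient bound $\le C/n$ by the chain rule, and your handling of smoothness at the origin (where $\theta_n$ is locally constant) is exactly the right observation. In short, you prove the same lemma with a construction that is, if anything, more robust than the paper's.
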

\begin{proof}
Let $\varphi:\R^+\to \R$ be a $C^\infty$ function such that
\[
0\leq \varphi \leq 1,\ \varphi(x)=1 \text{ for $x\leq 1$}, \ \varphi(x)=0 \text{ for $x\geq 2$},
\]
and take $\psi(x):=\sqrt{1-\varphi^2(x)}$. Since $\varphi^2(x)=1+o(|x-1|^n)$ as $x\to 1$ for every $n\in \N$, then $\psi\in C^\infty(\R)$. 

It is now enough to take $\varphi_n(x):=\varphi(|x|/n)$, $\psi_n(x):=\psi(|x|/n)$.
\end{proof}

\begin{proof}[Proof of Theorem~\ref{thm:existence}]
 Fix $k \ge 1$ and $1 \le p \le \infty$ arbitrary, and let $U_n=(u_{1,n},\ldots, u_{k,n})\subset (H^1_{0,V}(\Omega))^k$ be a minimizing sequence for $\optenergyrel{k}{p} (\Omega)$, normalized in $L^2(\Omega)$, that is,
\[
u_{i,n}\cdot u_{j,n}\equiv 0\quad  \forall i\neq j,\ n\in \N,
\]
and
\begin{equation}\label{eq:minimizingsequence_p}
 \|u_{i,n}\|_{2}=1 \ \forall n\in \N,\ i=1,\ldots, k, \quad \text{with} \quad \energyrel{k}{p}(u_{n}) \to \optenergyrel{k}{p} (\Omega).
\end{equation}
By definition of $\optenergyrel{k}{p} (\Omega)$, the sequence $(U_n)$ is bounded in $(H^1_{0,V}(\Omega))^k$.

By the local compactness of the embedding $(H^1_{0,V}(\Omega))^k \hookrightarrow (L^2_{{\rm loc}}(\Omega))^k$, there exists some $U=(u_1,\ldots, u_k)\in (H^1_{0,V}(\Omega))^k$ such that, up to a subsequence, for each $i$,
\begin{equation}
\label{eq:convergence_to_u_p}
u_{i,n} \to u_i \quad \text{ weakly in } H^1_{0,V}(\Omega),\text{ strongly in }L^2_{{\rm loc}}(\Omega) \text{ and a.e. in } \Omega.
\end{equation}
We will show that $u$ minimizes $\optenergyrel{k}{p} (\Omega)$. The proof is divided into a number of steps, in some of which it is necessary to divide into the two cases $p=\infty$ and $1 \le p < \infty$, where the details differ slightly. Before starting, we recall the notation $\mathcal{R}_V (u)$ for the Rayleigh quotient of the function $u$, and $a_V(u)$ for the numerator, see \eqref{eq:rayleigh-quotient} and \eqref{eq:form}, respectively.

\emph{Step 1: Decomposition/cut-off of the $u_{i,n}$.} Given $n\in \N$, we take cut-off functions $\varphi_n,\psi_n\in C^\infty(\R^d)$ as in Lemma~\ref{lemma:existence_of_cutoff_functions}, that is, such that 
\[
\varphi_n^2+\psi_n^2\equiv 1 \text{ in }\R^d,
\]
while
\[
\begin{cases}
\varphi_n=1,\ \psi_n=0 \text{ in } B_{n}(0)=:B_n,\\
\varphi_n=0,\ \psi_n=1 \text{ in } \R^d\setminus B_{2n}(0)=: \R^d\setminus B_{2n}\\
|\nabla \varphi_n|,\ |\nabla \psi_n| \leq C/n.
\end{cases}
\]
Observe that, for all $m\in\N$, since $u_{i,m} \psi_n \in H^1_{0,V} (\Omega \setminus \overline{B_n})$,
\begin{equation*}
\liminf_{n\to \infty} \mathcal{R}_V (u_{i,m} \psi_n) \geq \Sigma(\Omega), 
\end{equation*}
as follows from Persson's theorem in the form of \eqref{eq:from-persson-with-love}. By a standard diagonal argument, there exists a subsequence $m_n$ such that, for every $i$,
\begin{equation}
\label{eq:part_escaping_p}
\liminf_{n\to \infty}\mathcal{R}_V (u_{i,m_n} \psi_n) \geq \Sigma(\Omega). 
\end{equation}

\emph{Step 2: Convergence of the functions $\varphi_n u_{i,n}$.} More precisely, we will show that, given $i$, up to a subsequence,
\begin{equation}
\label{eq:B_npart_p}
    \int_\Omega |\varphi_n u_{i,n}|^2 \to \int_\Omega u_i^2,
\end{equation}
and $\varphi_n u_{i,n}\rightharpoonup u_i$ weakly in $H^1_{0,V}(\Omega)$. Note that \eqref{eq:B_npart_p} also implies convergence a.e. in $\Omega$.

We start by checking the first statement. Indeed, given $m$, since the support of $\varphi_m$ is contained in $\overline{B_{2m}}$ and $u_{i,n}\to u_i$ in $L^2_{{\rm loc}} (\Omega)$, we have
\[
\int_\Omega |\varphi_m u_{i,n}|^2=\int_{B_{2m}} |\varphi_m u_{i,n}|^2 \to \int_{B_{2m}} |\varphi_m u_i|^2=\int_\Omega |\varphi_m u_i|^2
\]
as $n\to \infty$. On the other hand, by dominated convergence,
\[
\int_\Omega |\varphi_m u_i|^2 \to \int_\Omega |u_i|^2 \text{ as } m\to \infty.
\]
By a diagonal argument, we can choose $m\mapsto n_m$ increasing so that
\[
\left|\int_\Omega |\varphi_m u_{i,n_m}|^2-\int_\Omega |\varphi_m u_i|^2\right|\leq \frac{1}{m},
\]
which proves \eqref{eq:B_npart_p}. As for the second statement, $(\varphi_n u_{i,n})$ is a bounded sequence in $H^1_{0,V}(\Omega)$, hence there exists a weak limit $v_i\in H^1_{0,V}(\Omega)$ of the $\varphi_n u_{i,n}$. But the strong convergence $\varphi_n u_{i,n} \to u_i$ in $L^2(\Omega)$ and continuity of the embedding $H^1_{0,V} (\Omega) \hookrightarrow L^2(\Omega)$ imply that $v_i = u_i$ a.e. in $\Omega$.

\emph{Step 3: Decomposition of the form}. We have the following IMS localization formula (see also  \cite[Section~2]{Si82}):
\begin{equation}
\label{eq:exp_bilinearform_p}
    a_V(u_{i,n})=a_V(u_{i,n} \varphi_n)+a_V(u_{i,n} \psi_n)+O\left(\frac{1}{n^2}\right) \quad \text{ as } n\to \infty.
\end{equation}
Indeed,
\begin{align*}
    a_V(u_{i,n}\varphi_n)&=\int_\Omega (|\nabla (\varphi_n u_{i,n})|^2+V(x)|\varphi_n u_{i,n}|^2)\\
        &=\int_\Omega (|\nabla u_{i,n}|^2|\varphi_n|^2+ V(x) |\varphi_n|^2 |u_{i,n}|^2)+ \int_\Omega (|\nabla \varphi_n|^2 |u_{i,n}|^2+2u_{i,n} \varphi_n \nabla \varphi_n \cdot \nabla u_{i,n} ).
\end{align*}
Analogously,
\[
a_V(u_{i,n}\psi_n)=\int_\Omega (|\nabla u_{i,n}|^2|\psi_n|^2+ V(x) |\psi_n|^2 |u_{i,n}|^2)+ \int_\Omega (|\nabla \psi_n|^2 |u_{i,n}|^2+2u_{i,n} \psi_n \nabla \psi_n \cdot \nabla u_{i,n} ).
\]
The claim of this step now follows by adding $a_V(u_{i,n}\varphi)$ and $a_V(u_{i,n}\psi_n)$, together with the fact that $\varphi_n^2+\psi_n^2=1$, $\varphi_n\nabla \varphi_n+ \psi_n \nabla \psi_n=0$ and
\begin{equation}
\int_{\Omega} (|\nabla \varphi_n|^2 |u_{i,n}|^2 + |\nabla \psi_n|^2 |u_{i,n}|^2 ) \le \frac{C\max \{ \|\nabla \psi\|_\infty^2, \|\nabla \varphi\|_\infty^2\}}{n^2},
\end{equation}
since $(U_n)$ is a bounded sequence in $(H^1_{0,V}(\Omega))^k$.

\emph{Step 4: Nontriviality of the limit function $U$.} We will show that, for each $i$, $u_i \not\equiv 0$ in $\Omega$. Suppose by way of contradiction that, say, $u_1 \equiv 0$. By \eqref{eq:B_npart_p}, $u_{1,n} \varphi_n \to 0$ in $L^2(\Omega)$ and so, since
\[
    1=\|u_{1,n}\|_2^2=\|u_{1,n} \varphi_n\|_2^2+\|u_{1,n} \psi_n\|_2^2
\]
it follows that
\begin{equation}
\label{eq:step4-badlimit}
    \|u_{{1},n}\psi_n\|_2^2\to 1,
\end{equation}
whence
\begin{displaymath}
\begin{aligned}
    \liminf_{n \to \infty} \mathcal{R}_V (u_{1,n})
    &= \liminf_{n \to \infty} \frac{a_V(u_{1,n}\varphi_n)+a_V(u_{1,n}\psi_n)+O(1/n^2)}{\|u_{1,n}\varphi_n\|_2^2+\|u_{1,n} \psi_n\|_2^2}\\
    &\ge \liminf_{n \to \infty} \frac{a_V(u_{1,n} \psi_n)}{\|u_{1,n} \psi_n\|_2^2} = \liminf_{n \to \infty} \mathcal{R}_V(u_{1,n} \psi_n) \ge \infess (\Omega),
\end{aligned}
\end{displaymath}
where for the first equality we have \eqref{eq:exp_bilinearform_p}, and the inequality follows from the positivity of the form, $a_V(u_{1,n}\varphi_n) \ge 0$, together with \eqref{eq:step4-badlimit}.

We will show that this is a contradiction to \eqref{eq:compactness_condition_infty} (case $p=\infty$) and \eqref{eq:compactness_condition_p} (case $1\le p < \infty$), respectively. Indeed, if $p=\infty$, then by \eqref{eq:compactness_condition_infty},
\begin{equation}
\label{eq:condition_contradiction_infty}
    \infess(\Omega) > \optenergyrel{k}{\infty}(\Omega) = \lim_{n \to \infty} \max_{i} \mathcal{R}_V (u_{i,n})
    \ge \liminf_{n\to\infty} \mathcal{R}_V(u_{1,n}) \ge \infess (\Omega),
\end{equation}
a contradiction.

For the case $1 \le p < \infty$, note that, for any $n \in \N$, $(u_{2,n},\ldots,u_{k,n})$ is a valid $(k-1)$-tuple for $\optenergyrel{k-1}{p} (\Omega)$, so that
\begin{displaymath}
    \sum_{i=2}^k \mathcal{R}_V(u_{i,n})^p \ge \optenergyrel{k-1}{p}(\Omega)^p.
\end{displaymath}
Hence, by \eqref{eq:compactness_condition_p},
\begin{displaymath}
\begin{aligned}
    \optenergyrel{k-1}{p}(\Omega)^p + \infess(\Omega)^p
    > \optenergyrel{k}{p}(\Omega)^p &= \lim_{n\to\infty}\sum_{i=1}^k \mathcal{R}_V (u_{i,n})^p\\
    &\ge \liminf_{n\to\infty}\mathcal{R}_V(u_{1,n}) + \optenergyrel{k-1}{p}(\Omega)^p \ge \infess(\Omega) + \optenergyrel{k-1}{p}(\Omega)^p,
\end{aligned}
\end{displaymath}
a contradiction.

\emph{Step 5: Convergence of the smaller Rayleigh quotients from the decomposition to the infimal value.} We will prove that
\begin{equation}
\label{eq:conv_of_min}
    \lim_{n\to\infty}\ | (\min \left\{ \mathcal{R}_V (u_{i,n} \varphi_n),\mathcal{R}_V (u_{i,n} \psi_n) \right\})_i |_p = \optenergyrel{k}{p} (\Omega),
\end{equation}
where the norm is understood as the $\ell^p$-norm of the vector 
\begin{multline*}
(\min \left\{ \mathcal{R}_V (u_{i,n} \varphi_n),\mathcal{R}_V (u_{i,n} \psi_n) \right\})_i \\= (\min \left\{ \mathcal{R}_V (u_{1,n} \varphi_n),\mathcal{R}_V (u_{1,n} \psi_n) \right\},\ldots, \min \left\{ \mathcal{R}_V (u_{k,n} \varphi_n),\mathcal{R}_V (u_{k,n} \psi_n) \right\}) \in \R^k.
\end{multline*}

Observe that $u_{i,n}\varphi_n \cdot u_{j,n}\varphi_n\equiv 0$, $u_{i,n}\psi_n \cdot u_{j,n}\varphi_n\equiv 0$ and $u_{i,n}\psi_n \cdot u_{j,n}\psi_n\equiv 0$ for every $i\neq j$. Therefore, by definition of $\optenergyrel{k}{p} (\Omega)$, using either $u_{i,n}\varphi_n$ or $u_{i,n}\psi_n$ as the test function in the $i$-th position of the test $k$-tuple, depending on which has the lower Rayleigh quotient, we have
\[
\optenergyrel{k}{p} (\Omega) \leq | (\min\{\mathcal{R}_V (u_{i,n} \varphi_n), \mathcal{R}_V (u_{i,n} \psi_n)\})_i|_p,
\]
and so
\[
\optenergyrel{k}{p} (\Omega)\leq \liminf_{n\to\infty}\ |(\min \left\{ \mathcal{R}_V (u_n \varphi_n),\mathcal{R}_V (u_n \psi_n) \right\})_i|_p.
\]
On the other hand, using the elementary inequality
\[
\min\left\{\frac{a}{c},\frac{b}{d}\right\}\leq \frac{a+b}{c+d} \text{ for every } a,b,c,d\geq 0, c,d\neq 0,
\]
and using also \eqref{eq:exp_bilinearform_p},
\begin{align*}
    \min \left\{ \mathcal{R}_V (u_{i,n} \varphi_n),\mathcal{R}_V (u_{i,n} \psi_n) \right\}
    &=\min \left\{ \frac{a_V(u_{i,n} \varphi_n)}{\|u_{i,n} \varphi_n\|_2^{2}},\frac{a_V(u_{i,n} \psi_n)}{\|u_{i,n} \psi_n\|_2^{2}} \right\}\\
    &\le \frac{a_V(u_{i,n} \varphi_n)+a_V(u_{i,n} \psi_n)}{\|u_{i,n} \varphi_n\|_2^{2}+\|u_{i,n} \psi_n\|_2^{2}}\\
    &=\frac{a_V(u_{i,n})+O(1/n^2)}{\|u_{i,n}\|_2^2}=\mathcal{R}_V(u_{i,n}) + O(1/n^2).
\end{align*}
Taking the $p$-norm over $i=1,\ldots,k$ and passing to the limit yields
\begin{align*}
    \limsup_{n\to\infty}\ |(\min \left\{ \mathcal{R}_V (u_n \varphi_n),\mathcal{R}_V (u_n \psi_n) \right\})_i|_p
    &\le \limsup_{n\to\infty}\ |(\mathcal{R}_V(u_{i,n}) + O(1/n^2))_i|_p \\
    &=\limsup_{n\to\infty}\ |(\mathcal{R}_V(u_{i,n}))_i|_p = \optenergyrel{k}{p} (\Omega),
\end{align*}
and \eqref{eq:conv_of_min} is proved.

\emph{Step 6: The function $u_{i,n}\varphi_n$ supported in the interior has the lower Rayleigh quotient in the limit.} We will prove that, for every $i=1,\ldots, k$ and large $n$,
\begin{equation}
\label{eq:finallyn}
    \min \left\{ \mathcal{R}_V (u_{i,n} \varphi_n),\mathcal{R}_V (u_{i,n} \psi_n) \right\}=\mathcal{R}_V (u_{i,n} \varphi_n).
\end{equation}
Assume that this is not the case for some $i$, without loss of generality $i=1$, that is, that $\min \left\{ \mathcal{R}_V (u_{1,n} \varphi_n),\mathcal{R}_V (u_{1,n} \psi_n) \right\}=\mathcal{R}_V (u_{1,n} \psi_n)$ for large $n$.

Observe that 
\[
    \liminf_{n\to\infty} \mathcal{R}_V(u_{1,n}\psi_n) \geq \Sigma(\Omega)
\]
by \eqref{eq:part_escaping_p}, which already yields a contradiction to \eqref{eq:compactness_condition_infty} in the case $p=\infty$, since then
\[
    \infess(\Omega) > \optenergyrel{k}{\infty}(\Omega) = \lim_{n\to \infty}| (\min \left\{ \mathcal{R}_V (u_{i,n} \varphi_n),\mathcal{R}_V (u_{i,n} \psi_n) \right\})_i |_\infty \ge \liminf_{n\to\infty} \mathcal{R}_V(u_{1,n}\psi_n) \geq \Sigma(\Omega).
\]
If $1\le p < \infty$, then by definition,
\[
\sum_{i=2}^k \left(\min \left\{ \mathcal{R}_V (u_{i,n} \varphi_n),\mathcal{R}_V (u_{i,n} \psi_n) \right\}\right)^p \geq \optenergyrel{k-1}{p} (\Omega)^p,
\]
leading to
\begin{equation*}
\optenergyrel{k}{p} (\Omega)^p=\lim_n \sum_{i=1}^k \left(\min \left\{ \mathcal{R}_V (u_{i,n} \varphi_n),\mathcal{R}_V (u_{i,n} \psi_n) \right\}\right)^p\geq \Sigma(\Omega)^p +\optenergyrel{k-1}{p} (\Omega)^p,
\end{equation*}
a contradiction to \eqref{eq:compactness_condition_p}.

\emph{Step 7: The limit function $U$ is a minimizer.} We can now show that $U = (u_1,\ldots,u_k)$ is indeed a minimizer: we have
\begin{displaymath}
\begin{aligned}
    \optenergyrel{k}{p} (\Omega) &\le |(\mathcal{R}_V(u_i))_i|_p = \left| \left(\frac{a_V(u_i)}{\|u_i\|_2^2}\right)_i \right|_p\\
    &\le \liminf_{n\to\infty}\ \left|\left( \frac{a_V(u_{i,n}\varphi_n)}{\|u_{i,n}\varphi_n\|_2^2} \right)_i \right|_p = \liminf_{n\to\infty}\ |(\mathcal{R}_V(u_{i,n}\varphi_n))_i|_p\\
    &=\liminf_{n\to\infty}\ |(\min\{\mathcal{R}_V(u_{i,n}\varphi_n),\mathcal{R}_V(u_{i,n}\psi_n)\})_i|_p =\optenergyrel{k}{p} (\Omega),
\end{aligned}
\end{displaymath}
where we have used, respectively, the definition of $\optenergyrel{k}{p}(\Omega)$ in the first step, the fact that $u_i\not\equiv 0$ (Step 4) in the second, Step 2 together with Fatou's lemma in the third, the definition of the Rayleigh quotient in the fourth, \eqref{eq:finallyn} (Step 6) for the penultimate equality, and \eqref{eq:conv_of_min} (Step 5) for the last equality.

\emph{Step 8: The case $1 \leq p < \infty$: Convergence of the sequence in $H^1_{0,V}$.} Now suppose that $1 \leq p < \infty$. We already saw, at the beginning of the proof, that $U_n \rightharpoonup U$ weakly in $H^1_{0,V} (\Omega)$, and hence also weakly in $L^2(\Omega)$ (for any $1 \leq p < \infty)$; we also know (Steps 5 and 6) that, for any $i=1,\ldots,k$,
\begin{equation}
\label{eq:rq-conv}
    \mathcal{R}_V (u_{i,n}) \to \mathcal{R}_V (u_i).
\end{equation}
Furthermore, by Step 7, we have that for a subsequence $\varphi_n u_{i,n} \to u_i$ in $L^2$. This, together with the convergence of the respective Rayleigh quotients as a whole (Steps 5 and 6) implies that
\begin{displaymath}
a_V(\varphi_n u_{i,n}) \to a_V(u_i).
\end{displaymath}
 We want to show that $\|u_i\|_2=1$ for each $i$, this plus \eqref{eq:rq-conv} will then imply that $a_V(u_{i,n}) \to a_V(u_i)$. Assume for a contradiction that $0< \| u_1\|_2 <1$, we will show that this is a contradiction to \eqref{eq:compactness_condition_p} (case $1\le p < \infty$), respectively. Indeed, 
\begin{displaymath}
    \mathcal{R}_V(u_i)^p + \optenergyrel{k-1}{p}(\Omega)^p \le \mathcal{R}_V(u_i)^p + \sum_{\substack{j=1\\ j\neq i}} \mathcal{R}_V(u_j)^p = \optenergyrel{k}{p}(\Omega)^p < \Sigma(\Omega)^p + \optenergyrel{k-1}{p}(\Omega)^p 
\end{displaymath}
and thus we have
\begin{equation}
\label{eq:below!}
\mathcal R_V(u_i) < \infess(\Omega)
\end{equation}
for all $i=1, \ldots, k$. We conclude
\begin{displaymath}
\begin{aligned}
\optenergyrel{k}{p}(\Omega)&= \lim_{n\to \infty} |(\mathcal{R}_V(u_{i,n}))_i|_p \\
&= \lim_{n\to \infty} |(a_V(\varphi_n u_{i,n}) + a_V(\psi_n u_{i,n}))_i|_p \\
&\ge |(\mathcal{R}_V(u_i) \|u_i\|^2_2 + \infess(\Omega) (1- \|u_{i}\|_{2}^2)|_p\\
&> |(\mathcal{R}_V(u_i))_i|_p = \optenergyrel{k}{p}(\Omega).
\end{aligned}
\end{displaymath}
We have shown that $U_n \rightharpoonup U$ weakly in $H^1_{0,V}(\Omega)$ and, thanks to the fact that $\|u_{i,n}\|_2 \to \|u_i\|_2$ and $a_V(u_{i,n}) \to a_V(u_i)$ for each $i$, that $\|U_n\|_{H^1_{0,V}} \to \|U\|_{H^1_{0,V}}$. This implies that $U_n \to U$ strongly in $H^1_{0,V}(\Omega)^k$.

\txtr{
\emph{Step 9: The case $p=\infty$: Convergence of the Rayleigh quotients of the maximizing $u_i$.} Now suppose that $p=\infty$, and choose any $i=1,\ldots,k$ for which $R_V(u_{i,n})\to \optenergyrel{k}{\infty}(\Omega)$. Since $R_V(u_i)\leq \liminf R_V(u_{i,n})$, then  $\mathcal{R}_V(u_i) = \max_i \{\mathcal{R}_V(u_i)\} = \optenergyrel{k}{\infty}(\Omega)$.  We wish to show that $\|u_i\|_2 = 1$. If this is not the case, that is, if $0 < \|u_i\|_2 < 1$, then by \eqref{eq:compactness_condition_infty}, we have 
\begin{displaymath}
\begin{aligned}
\optenergyrel{k}{\infty}(\Omega)&= \max_j \{\mathcal{R}_V(u_j)\} = \lim_{n\to \infty} \max_j \{\mathcal{R}_V(u_{j,n})\} \\
&= \lim_{n\to \infty} \max_j \left\{a_V(\varphi_n u_{j,n}) + a_V(\psi_n u_{j,n}) \right\}\\
&\ge \mathcal{R}_V(u_i) \|u_i\|^2_2 + \infess(\Omega) (1- \|u_{i}\|_{2}^2)\\
&> \optenergyrel{k}{\infty}(\Omega),
\end{aligned}
\end{displaymath}
where, for the third equality, we have used Step 3. As in Step 8, this also allows us to conclude that $u_{i,n} \to u_i$ in $H^1_{0,V} (\Omega)$ for this $i$, since then trivially $\|u_{i,n}\|_2 \to \|u_i\|_2$, and $a_V(u_{i,n}) \to a_V(u_i)$.
}
\end{proof}

\section{Continuity and monotonicity in $p$}
\label{sec:continuity}

\txtr{In this section we prove the monotonicity and continuity statements for the weak versions of the energies in Proposition~\ref{prop:p}, that is, parts (1) and (2), for $\optenergyrel{k}{p}(\Omega)$ and $\thresholdrel{k}{p}(\Omega)$, since these statements will be needed for our study of the regularity of the minimizers. The equalities $\optenergyrel{k}{p}(\Omega) = \optenergy{k}{p}(\Omega)$ and $\thresholdrel{k}{p}(\Omega) = \threshold{k}{p}(\Omega)$, as well as part (3), will be treated afterwards.

\begin{proposition}\label{prop:p-continuity}
Let $k\ge 1$. Then:
\begin{itemize}
    \item[(1)] The functions
    \begin{equation*}
    p \mapsto \optenergyrel{k}{p} (\Omega) \quad \text{ and }\quad p \mapsto  \thresholdrel{k}{p}(\Omega)
\end{equation*}
are continuous and non-increasing in $p \in [1,\infty]$.
\end{itemize}
Now let $p \in [1,\infty]$ and suppose the threshold condition \eqref{eq:strict-p} is satisfied for this $p$ (in particular, by Theorem \ref{thm:secondmain}, a minimizer exists for $\optenergyrel{k}{p}(\Omega)$). Then:
\begin{itemize}
\item[(2)] There exists a neighborhood of $p \in [1,\infty]$ in which \eqref{eq:strict-p} holds. 
\end{itemize}
\end{proposition}
}
\begin{proof}
(1) We will show that $p\mapsto \optenergyrel{k}{p}(\Omega)$ is non-increasing and continuous.     Suppose $1 \le p_1 \le p_2 < \infty$. A consequence of H\"older's  inequality in $\ell^p$-spaces is
    \begin{displaymath}
        |\cdot|_\infty\le |\cdot|_{p_2} \le |\cdot|_{p_1} \le k^{\tfrac{1}{p_1} - \tfrac{1}{p_2}} |\cdot|_{p_2} \le k^{\tfrac{1}{p_1}} |\cdot|_\infty .
    \end{displaymath}
    This implies in particular that, for any given $U=(u_1,\ldots,u_k) \in (H^1_{0,V}(\Omega) \setminus \{0\})^k$, the function
    \begin{displaymath}
        p \mapsto \energyrel{k}{p} (u_1,\ldots,u_k) = \left| (\mathcal{R}_V(u_1), \ldots, \mathcal{R}_V(u_k))\right|_p
    \end{displaymath}
    is non-increasing in $p \in [1,\infty]$, being also continuous. The claim now follows from the characterization of $\optenergyrel{k}{p}(\Omega)$ as the infimum over all such $k$-tuples in $(H^1_{0,V}(\Omega) \setminus \{0\})^k$. More precisely, suppose $p\to p_0$ with $p_0 \in [1,\infty)$, then for $U=(u_1, \ldots, u_k) \in (H^1_{0,V}(\Omega))^k$ we have
\begin{displaymath}
     \min\{k^{\frac{1}{p_0}- \frac{1}{p}}, 1\} |(\mathcal R_V(u_i))|_{p_0} \le |(\mathcal R_V(u_i))|_{p} \le \max\{k^{\frac{1}{p} - \frac{1}{p_0}}, 1\} |(\mathcal R_V(u_i))|_{p_0}.  
\end{displaymath}
For $p_0 = \infty$, then we have for $p\in[1,\infty)$ respectively
\begin{displaymath}
|(\mathcal R_V(u_i))|_\infty\le |(\mathcal R_V(u_i))|_p \le k^{\frac{1}{p}} |(\mathcal R_V(u_i))|_\infty
\end{displaymath}

Then, since $\energyrel{k}{p}(U)=|(\mathcal R_V(u_i))|_p$, passing to the infimum we get, for $p_0 \in [1, \infty)$,
\begin{displaymath}
     \min\{k^{\frac{1}{p_0}- \frac{1}{p}}, 1\} \optenergyrel{k}{p_0}(\Omega) \le \optenergyrel{k}{p}(\Omega) \le \max\{k^{\frac{1}{p} - \frac{1}{p_0}}, 1\} \optenergyrel{k}{p_0}(\Omega).  
\end{displaymath}
and, for $p_0 = \infty$,
\begin{displaymath}
\optenergyrel{k}{\infty}(\Omega)\le \optenergyrel{k}{p}(\Omega) \le k^{\frac{1}{p}} \optenergyrel{k}{\infty}(\Omega).
\end{displaymath}
Then we have monotonicity in $p$ and, as $p\to p_0$, 
\begin{displaymath}
    \lim_{p \to p_0} \optenergyrel{k}{p}(\Omega) = \optenergyrel{k}{p_0} (\Omega).
\end{displaymath}

The monotonicity of $\thresholdrel{k}{p}(\Omega) = \left(\optenergyrel{k-1}{p}(\Omega)^p + \infess(\Omega)^p\right)^{1/p}$ for $1 \leq p < \infty$ is a consequence of the fact that $p \mapsto \optenergyrel{k-1}{p}(\Omega)$ is a positive, non-increasing function and that $p \mapsto |\cdot |_p$ is likewise non-increasing.

The continuity of $p\mapsto \thresholdrel{k}{p}(\Omega)$ for $1\le p < \infty$ is an immediate consequence of the continuity of $p \mapsto \optenergyrel{k}{p}(\Omega)$. 
For the case $p=\infty$ note that, for any $1 \leq q < \infty$,
\[
    \thresholdrel{k}{\infty}(\Omega)\le \thresholdrel{k}{q}(\Omega) \le k^{1/q}\infess (\Omega).
\]
Letting $q \to \infty$,  we conclude that $\thresholdrel{k}{q}(\Omega) \to \thresholdrel{k}{\infty}(\Omega)$, and we have continuity for all $ 1\le p \le \infty$. This also establishes monotonicity at $p=\infty$.

(2) Suppose that, for some $p_0 \in [1,\infty]$, the threshold condition \eqref{eq:strict-p} holds. Then by continuity of the quantity $\optenergyrel{k}{p}(\Omega)$ and the continuity of the threshold expression itself in $p$ (1), there exists a neighborhood $I \subset [1,\infty]$ of $p_0$ such that \eqref{eq:strict-p} holds for all $p \in I \setminus \{p_0\}$.

\end{proof}

\section{Proof of Theorem~\ref{thm:regularity}: Regularity}
\label{sec:regularity}

The goal of this section is  to prove Theorem~\ref{thm:regularity}, on the regularity of the minimizers of $\optenergyrel{k}{p}(\Omega)$.

We will need the following auxiliary result, which has been used for instance in  \cite{ASST,CoTeVe05,HHT09}. For a complete proof, see \cite[Lemma A.1]{ASST}.

\begin{lemma}\label{lemma:expansion_L^2} Let $u \in L^2(\Omega)$ with $u^+\not\equiv 0$. Then, for all $\varphi \in L^2(\Omega)$, 
\begin{equation}\label{inverse-norm-lemma}
\displaystyle \frac{1}{\|(u \pm t\varphi)^{+}\|_2^2 }= \dfrac{1}{\|u^+\|_2^2} \mp \frac{2t}{\|u^+\|_2^4} \int_{\Omega}u^+\varphi + O(t^2)\,  \|\varphi\|_2^2 \,\qquad \text{ as } t\to 0^+,\vspace{10pt}
\end{equation}
where the bound of $\frac{O(t^2)}{t^2}$ depends only on $\|u^+\|_ 2$, as $t\to 0^+$.
\end{lemma}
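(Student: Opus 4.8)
The plan is to reduce the claimed expansion of the \emph{reciprocal} of the squared norm to a second-order expansion of the squared norm $t \mapsto \|(u\pm t\varphi)^+\|_2^2$ itself, and then to invert by a geometric series. The only pointwise input needed is that the scalar function $s \mapsto (s^+)^2$ is $C^1$ on $\R$ with derivative $2s^+$, and that $s\mapsto 2s^+$ is $2$-Lipschitz; Taylor's formula with integral remainder then yields, for all $a,b\in\R$,
\[
\bigl| ((a+b)^+)^2 - (a^+)^2 - 2 a^+ b\bigr| \le b^2 .
\]
Applying this pointwise with $a = u(x)$ and $b = \pm t\varphi(x)$ and integrating over $\Omega$ gives
\[
\|(u\pm t\varphi)^+\|_2^2 = \|u^+\|_2^2 \pm 2t \int_\Omega u^+\varphi + \rho_\pm(t), \qquad |\rho_\pm(t)| \le t^2 \|\varphi\|_2^2 .
\]
(Equivalently, one may differentiate $g(t) := \|(u+t\varphi)^+\|_2^2$ under the integral sign, check that $g'(t) = 2\int_\Omega (u+t\varphi)^+\varphi$ is $2\|\varphi\|_2^2$-Lipschitz in $t$, and invoke the one-variable Taylor estimate; this route avoids the pointwise lemma entirely.) Since $\|u^+\|_2 > 0$ by hypothesis, the left-hand side stays bounded away from $0$ for $t$ small.

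Next I would invert. Set $A := \|u^+\|_2^2$ and $c_\pm := \pm 2\int_\Omega u^+\varphi$, so that the previous display reads $\|(u\pm t\varphi)^+\|_2^2 = A + c_\pm t + \rho_\pm(t)$. For $t$ small enough that $|c_\pm t + \rho_\pm(t)| \le A/2$ — a threshold depending on $\|u^+\|_2$ and $\|\varphi\|_2$ — the identity $(1+x)^{-1} = (1-x) + x^2/(1+x)$ together with $|1+x|\ge 1/2$ gives $\bigl|(1+x)^{-1} - (1-x)\bigr| \le 2x^2$ for $|x|\le 1/2$, whence
\[
\frac{1}{\|(u\pm t\varphi)^+\|_2^2} = \frac{1}{A} - \frac{c_\pm t}{A^2} + E_\pm(t), \qquad |E_\pm(t)| \le \frac{|\rho_\pm(t)|}{A^2} + \frac{2}{A^3}\bigl(c_\pm t + \rho_\pm(t)\bigr)^2 .
\]
The linear term $-c_\pm t/A^2$ equals $\mp \tfrac{2t}{\|u^+\|_2^4}\int_\Omega u^+\varphi$, which is exactly the linear term of \eqref{inverse-norm-lemma}.

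The one point that genuinely requires care — and the only mild obstacle I anticipate — is verifying that $E_\pm(t)$ has the form $O(t^2)\,\|\varphi\|_2^2$ with the $O(t^2)/t^2$ bound depending only on $\|u^+\|_2$, and in particular not on $\varphi$. For this I would use Cauchy--Schwarz on the linear coefficient, $c_\pm^2 \le 4\|u^+\|_2^2\|\varphi\|_2^2 = 4A\|\varphi\|_2^2$, together with $|\rho_\pm(t)| \le t^2\|\varphi\|_2^2$, to obtain, as $t\to 0^+$,
\[
|E_\pm(t)| \le \frac{t^2\|\varphi\|_2^2}{A^2} + \frac{4}{A^3}\bigl(c_\pm^2 t^2 + \rho_\pm(t)^2\bigr) \le \Bigl(\frac{C}{\|u^+\|_2^4} + o(1)\Bigr)\|\varphi\|_2^2\, t^2
\]
for a universal constant $C$, which is the required estimate.

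Finally, the sign bookkeeping (the $\pm$/$\mp$ pattern) is obtained simply by running the argument for $+t\varphi$ and for $-t\varphi$ separately, and the restriction $t\to 0^+$ plays no structural role beyond legitimizing the geometric-series step. Apart from the $\varphi$-independence of the error constant, every step is elementary once the $C^{1,1}$ regularity of $s \mapsto (s^+)^2$ is in hand; this is why the reference simply defers to \cite[Lemma~A.1]{ASST}.
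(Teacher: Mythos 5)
Your proof is correct, and since the paper itself gives no argument for this lemma (it defers entirely to \cite[Lemma A.1]{ASST}), it cannot be compared against an in-paper proof; judged on its own, it is a complete and self-contained derivation. The two ingredients — the pointwise $C^{1,1}$ Taylor bound $|((a+b)^+)^2-(a^+)^2-2a^+b|\le b^2$ integrated over $\Omega$, followed by the geometric-series inversion on $\{|x|\le 1/2\}$ — are exactly what is needed, and the sign bookkeeping and the linear term both check out. The one place I would tighten the write-up is the uniformity claim at the end: your ``$o(1)$'' hides a dependence on $\|\varphi\|_2$ (it comes from the term $\rho_\pm(t)^2/A^3\le t^4\|\varphi\|_2^4/A^3$). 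This is harmless, because the inversion step already forces you into the regime $|c_\pm t+\rho_\pm(t)|\le A/2$, which in particular gives $t^2\|\varphi\|_2^2\le A/2$ and hence $t^4\|\varphi\|_2^4/A^3\le t^2\|\varphi\|_2^2/(2A^2)$; with that observation the entire error is bounded by $C\,t^2\|\varphi\|_2^2/\|u^+\|_2^4$ for a universal $C$, so the $O(t^2)/t^2$ constant depends only on $\|u^+\|_2$ as claimed (the admissible range of $t$, but not the constant, depends on $\varphi$). Making that absorption explicit would close the only loose end.
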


 \begin{proposition}
 \label{prop:diff-ineq-p}
   Take $1\le p<\infty$ and  assume there exists a minimizer $(u_1,\ldots, u_k)$ for $\optenergyrel{k}{p}(\Omega)$, normalized in $L^2(\Omega)$. For each $i=1,\ldots, k$, take
     \[
    v_i=a_{i} u_i,\quad \text{ for } \quad a_i^2=a_{i,p}^2:=\frac{\mathcal{R}_V(u_i)^{p-1}}{\optenergyrel{k}{\infty}(\Omega)^{p-1}}\in \mathbb{R}\setminus\{0\}. 
     \]
     The following differential inequalities are satisfied in the distributional sense:
     \begin{enumerate}
     \item $-\Delta v_i + V(x) v_i \leq \mathcal{R}_V(v_i) v_i$;
     \item $-\Delta\left(v_i-\sum_{j \neq i} v_j\right) + V(x) \left(v_i-\sum_{j \neq i} v_j\right) \geq \mathcal{R}_V(v_i) v_i-\sum_{j \neq i} \mathcal{R}_V(v_j) v_j$.
     \end{enumerate}
      \end{proposition}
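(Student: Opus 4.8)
The plan is to obtain both differential inequalities by computing the first-order effect of one-sided perturbations of the minimizing $k$-tuple on the relaxed energy $\energyrel{k}{p}$, and exploiting minimality. Fix a test function $\varphi\in C_c^\infty(\Omega)$ with $\varphi\geq 0$. For (1), I would perturb only the $i$-th component, replacing $u_i$ by $u_i+t\varphi$ (keeping the other $u_j$ fixed), and observe that although $(u_1,\dots,u_i+t\varphi,\dots,u_k)$ need not have disjoint supports, the positive and negative parts do satisfy the constraint up to what is controlled by the known $L^2$-expansion. The cleaner route, following \cite{CoTeVe05,HHT09}, is to work with $u_i^+$ and test with $(u_i+t\varphi)^+$ in the $i$-th slot; since $u_i^+\cdot u_j=0$ still holds a.e.\ for the parts of $u_j$ that matter, one gets an admissible competitor. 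Using $a_V\big((u_i+t\varphi)^+\big)=a_V(u_i^+)+2t\int_\Omega(\nabla u_i^+\cdot\nabla\varphi+V u_i^+\varphi)+O(t^2)$ together with Lemma~\ref{lemma:expansion_L^2} for the denominator, one expands
\[
\mathcal{R}_V\big((u_i+t\varphi)^+\big)=\mathcal{R}_V(u_i^+)+\frac{2t}{\|u_i^+\|_2^2}\Big(\int_\Omega \nabla u_i^+\cdot\nabla\varphi+V u_i^+\varphi-\mathcal{R}_V(u_i^+)\int_\Omega u_i^+\varphi\Big)+O(t^2).
\]
Feeding this into $\energyrel{k}{p}$ and using $\frac{d}{dt}\big|_{t=0^+}\energyrel{k}{p}\ge 0$ (minimality, and the chain rule for $s\mapsto s^{1/p}$ applied to $\sum_j \mathcal R_V(\cdot)^p$) yields, after dividing out the positive factor $p\,\mathcal R_V(u_i)^{p-1}\optenergyrel{k}{p}(\Omega)^{1-p}$,
\[
\int_\Omega \nabla u_i^+\cdot\nabla\varphi+V u_i^+\varphi\le \mathcal{R}_V(u_i^+)\int_\Omega u_i^+\varphi
\]
for all $0\le\varphi\in C_c^\infty(\Omega)$; replacing $u_i^+$ by $u_i$ (the analogous computation with $u_i^-$, or simply noting $\mathcal R_V(u_i)=\mathcal R_V(\pm u_i)$, handles the sign) and rescaling by $a_i$ gives exactly (1), since $\mathcal R_V(v_i)=\mathcal R_V(u_i)$.

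For (2), the idea is to perturb \emph{all} components simultaneously in a correlated way: replace $u_i$ by $(u_i-t\varphi)^+$ in slot $i$ and $u_j$ by $(u_j+t\varphi)^+$ in every slot $j\ne i$, for $0\le\varphi\in C_c^\infty(\Omega)$. The disjointness constraint is the delicate point here — one checks (as in \cite{CoTeVe05,HHT09,TavaresTerracini}) that after taking positive parts the supports are still essentially disjoint because $\varphi$ pushes the $u_j$'s up where $u_i$ is being pushed down and vice versa; this is exactly why the weighting $a_i^2=\mathcal R_V(u_i)^{p-1}/\optenergyrel{k}{\infty}(\Omega)^{p-1}$ appears, as it is the factor that makes the first-order terms from the different components combine into a single sign condition. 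Writing out $\frac{d}{dt}\big|_{t=0^+}\energyrel{k}{p}\ge 0$ and collecting terms, the contribution from slot $i$ comes with a $-\varphi$ and those from slots $j\ne i$ with $+\varphi$, each weighted by $\mathcal R_V(u_\cdot)^{p-1}$; normalizing by $\optenergyrel{k}{p}(\Omega)^{p-1}$ and recognizing $a_j^2\mathcal R_V(u_j)=\mathcal R_V(v_j)a_j^{-2}\cdot a_j^2$... more precisely, after the substitution $v_\cdot=a_\cdot u_\cdot$ the inequality becomes
\[
\int_\Omega \nabla\Big(v_i-\sum_{j\ne i}v_j\Big)\cdot\nabla\varphi+V\Big(v_i-\sum_{j\ne i}v_j\Big)\varphi\ \ge\ \int_\Omega\Big(\mathcal R_V(v_i)v_i-\sum_{j\ne i}\mathcal R_V(v_j)v_j\Big)\varphi
\]
for all $0\le\varphi\in C_c^\infty(\Omega)$, which is precisely (2) in the distributional sense.

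The main obstacle I anticipate is the bookkeeping in (2): verifying that $\big((u_i-t\varphi)^+,\, (u_j+t\varphi)^+_{j\ne i}\big)$ is genuinely an admissible competitor (disjoint supports a.e.) for small $t>0$, and that the one-sided derivative of $\energyrel{k}{p}$ along this path is well-defined and computed correctly — in particular that the non-smoothness of $s\mapsto s^+$ and of the $p$-norm at points where some $\mathcal R_V(u_i)$ coincide does not spoil the one-sided differentiability. This is handled exactly as in the bounded-domain references \cite{CoTeVe05,HHT09}: one only needs the one-sided (super/sub)differential inequality, not an equality, so it suffices that $\energyrel{k}{p}$ be upper semidifferentiable from the right along these admissible directions, which follows from Lemma~\ref{lemma:expansion_L^2} and the concavity/convexity of the relevant one-variable functions. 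Once the competitor is in place, everything else is the routine expansion sketched above. I would remark that (1) and (2) together are precisely the hypotheses needed to invoke the regularity machinery of \cite{ASST,CoTeVe05,HHT09} in the proof of Theorem~\ref{thm:regularity}, so no further structure is required at this stage.
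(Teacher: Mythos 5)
There is a genuine gap, and in fact two. For (1), you propose the perturbation $(u_i+t\varphi)^+$ with $\varphi\geq 0$, $t>0$. This is \emph{not} an admissible competitor: for $j\neq i$ one has $(u_i+t\varphi)^+\,u_j\geq t\varphi\,u_j$, which is strictly positive on $\{\varphi>0\}\cap\{u_j>0\}$, so the disjointness constraint fails; taking the positive part of $u_i+t\varphi$ does not help, because the support of $(u_i+t\varphi)^+$ \emph{grows}, not shrinks. Moreover, even granting admissibility, your own expansion has the first-order coefficient $\frac{2t}{\|u_i^+\|_2^2}\big(\int\nabla u_i^+\cdot\nabla\varphi+Vu_i^+\varphi-\mathcal R_V(u_i^+)\int u_i^+\varphi\big)$, and minimality forces this coefficient to be $\geq 0$ — which gives $\int\nabla u_i^+\cdot\nabla\varphi+Vu_i^+\varphi\geq\mathcal R_V(u_i^+)\int u_i^+\varphi$, the \emph{opposite} of what you wrote and of inequality (1). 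The correct one-sided perturbation is $u_{i,t}:=(u_i-t\varphi)^+$: the support can only shrink inside $\{u_i>0\}$ so admissibility is automatic, and the first-order coefficient now comes with the correct sign, yielding $-\Delta u_i+Vu_i\leq\mathcal R_V(u_i)u_i$ as a subsolution inequality. (This is what the paper does, and also what \cite{CoTeVe05,HHT09} do.)

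For (2), the gap is more serious. Your proposed competitor $\big((u_i-t\varphi)^+,(u_j+t\varphi)^+_{j\neq i}\big)$ is again not admissible: for two distinct $j,j'\neq i$, one has $(u_j+t\varphi)^+(u_{j'}+t\varphi)^+\geq t^2\varphi^2>0$ where $\varphi>0$ (and even for $k=2$, $(u_i-t\varphi)^+$ overlaps $(u_j+t\varphi)^+$ on $\{u_i>t\varphi\}\cap\{\varphi>0\}$). The paper's actual construction perturbs the \emph{signed} auxiliary functions $\hat v_i:=v_i-\sum_{j\neq i}v_j$, taking $v_{i,t}=(\hat v_i+t\varphi)^+$ and $v_{j,t}=(\hat v_j-t\varphi)^+$ for $j\neq i$; since $\hat v_j-t\varphi$ and $\hat v_i+t\varphi$ are each other's negatives (up to the remaining $v_\ell$'s, which vanish there), the positive parts are automatically disjoint. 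More importantly, the single most delicate point of the proof is that one \emph{cannot} Taylor-expand each $a_V(v_{j,t})$ individually (the positive part is not smooth along the changing free boundary), but one \emph{can} expand the sum, because $\sum_j v_{j,t}^2=(\hat v_i+t\varphi)^2$ and $\sum_j|\nabla v_{j,t}|^2=|\nabla(\hat v_i+t\varphi)|^2$ a.e.\ by disjointness of supports, so $\sum_j a_V(v_{j,t})=a_V(\hat v_i+t\varphi)$ is a smooth function of $t$. Your write-up neither identifies this obstruction nor uses the $\hat v_i$; the paragraph on how the weights $a_i^2$ ``make the first-order terms combine'' is asserted but not derived. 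In the paper the weights arise mechanically as $f'(\mathcal R_V(v_i))$ when differentiating $t\mapsto\energyrel{k}{p}$ along the admissible path, and the sum-expansion is what allows the disparate terms to assemble into a single inequality for $\hat v_i$; as written your argument does not close.
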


      \begin{proof}
This proof is mostly taken from \cite{CoTeVe05, HHT09}.       We point out that in \cite{CoTeVe05} the domain $\Omega$ is bounded and $V\equiv 0$, which is not the case in our situation. However, all the arguments apply almost word-for-word; we sketch them here, following the approach of the proof of \cite[Lemma 3.11]{HHT09}. From now on, we let $\varphi \in C_c^{\infty}(\Omega)$ be a nonnegative function. Without loss of generality, we prove the inequalities for $i=1$.

      \smallbreak 
      
      \noindent Proof of (1).  Consider, for $t>0$ small, the perturbation
      \[
      u_{1,t}:=\left(u_1-t \varphi\right)^{+}.
      \]
Observe that $u_{1,t}\in H^1_{0,V}(\Omega)\setminus \{0\}$, and that $u_{1,t}\cdot u_j\equiv 0$ whenever $j\geq 2$. Then
\begin{equation}\label{eq:S_aux1}
\energyrel{k}{p}(u_1,\ldots, u_k)=\optenergyrel{k}{p}(\Omega)\leq \energyrel{k}{p}(u_{1,t},\ldots, u_k)=\left( \mathcal{R}_V(u_{1,t})^p + \sum_{j\geq 2} \mathcal{R}_V(u_j)^p \right)^{1/p}.
\end{equation}
Using Lemma \ref{lemma:expansion_L^2} and the fact that $\|u_1\|_2=1$,
\begin{align*}
\mathcal{R}_V(u_{1,t})&=\frac{\int_\Omega |\nabla \left(u_1-t \varphi\right)^{+}|^2 + V(x)|\left(u_1-t \varphi\right)^{+}|^2}{\|u_{1,t}\|_2^2}\\
				&\leq \left(\int_\Omega |\nabla \left(u_1-t \varphi\right)|^2+V(x)\left(u_1-t \varphi\right)^2\right)\left(1+2t\int_\Omega u_1\varphi + o(t)\right)\\
				&=\left(\int_\Omega (|\nabla u_1|^2 + V(x) u_1^2) - 2t \int_\Omega (\nabla u_1\cdot \nabla \varphi + V(x) u_1 \varphi ) + o(t)\right) \left(1+2t\int_\Omega u_1\varphi + o(t)\right)\\
				&=\mathcal{R}_V(u_1)-2t \int_\Omega (\nabla u_1\cdot \nabla \varphi + V(x) u_1\varphi) + 2t \mathcal{R}_V(u_1)\int_\Omega u_1 \varphi + o(t).
\end{align*}

\smallbreak

Let $f(t)=\left(t^p + \sum_{j\geq 2} \mathcal{R}_V(u_j)^p\right)^{1/p}$. For $a,b\in \R$ there exists $\xi$ between $a$ and $b$ such that
\begin{align*}
f(b)&=f(a)+f'(a)(b-a)+\frac{f''(\xi)}{2}(b-a)^2
\end{align*}
with 
\[
f'(t)=\frac{t^{p-1}}{\left(t^p + \sum_{j\geq 2}a_j^p \right)^\frac{p-1}{p}},\quad f''(t)=\frac{(p-1)t^{p-2}}{\left(t^p + \sum_{j\geq 2}a_j^p \right)^\frac{p-1}{p}}+\frac{(1-p)t^{2p-2}}{\left(t^p + \sum_{j\geq 2}a_j^p \right)^\frac{2p-1}{p}}.
\]
We apply this Taylor expansion with
\[
a=\mathcal{R}_V(u_1),\qquad b=\mathcal{R}_V(u_1)-2t \int_\Omega (\nabla u_1\cdot \nabla \varphi + V(x)u_1\varphi)+2t \mathcal{R}_V(u_1)\int_\Omega u_1 \varphi +o(t).
\]
In this case, the term $\frac{f''(\xi)}{2}(b-a^2)$ is an $o(t)$, as $t\to 0$. So, in conclusion,
\begin{align*}
&\energyrel{k}{p}(u_1,\ldots, u_k)\leq \energyrel{k}{p}(u_{1,t},u_2,\ldots, u_k)=f(\mathcal{R}_V(u_{1,t})) \\
&\leq f\left(\mathcal{R}_V(u_1)-2t \int_\Omega (\nabla u_1\cdot \nabla \varphi + V(x)u_1\varphi)+2t \mathcal{R}_V(u_1)\int_\Omega u_1 \varphi +o(t)\right)\\
	&= f(\mathcal{R}_V(u_1))+f'(\mathcal{R}_V(u_1))\left(-2t \int_\Omega (\nabla u_1\cdot \nabla \varphi + V(x)u_1\varphi)+2t \mathcal{R}_V(u_1)\int_\Omega u_1 \varphi +o(t)\right) + o(t)\\
	&=\energyrel{k}{p}(u_1,\ldots, u_k)+\\ &\qquad \frac{\mathcal{R}_V(u_1)^{p-1}}{\energyrel{k}{p}(u_1,\ldots, u_k)^{p-1}}\left(-2t \int_\Omega (\nabla u_1\cdot \nabla \varphi + V(x)u_1\varphi)+2t \mathcal{R}_V(u_1)\int_\Omega u_1 \varphi +o(t)\right) + o(t)
\end{align*}
From this, we deduce
\[
0\leq \frac{\mathcal{R}_V(u_1)^{p-1}}{\energyrel{k}{p}(u_1,\ldots, u_k)^{p-1}}\left(-2t \int_\Omega (\nabla u_1\cdot \nabla \varphi + V(x)u_1\varphi)+2t \mathcal{R}_V(u_1)\int_\Omega u_1 \varphi +o(t)\right) + o(t)
\]
Dividing the result by $t>0$, and letting $t\to 0$, implies $-\Delta u_1+V(x)u_1\leq \mathcal{R}_V(u_1) u_1$, which is equivalent to (1).

\medbreak

\noindent Proof of (2). For convenience of the reader, we present here a sketch of the proof, which follows the one of item (2) of \cite[Lemma 3.11]{HHT09}, taking therein the choice $\Lambda:=\optenergyrel{k}{\infty}^\frac{p-1}{p}$ and not considering the correction terms present there. Consider the deformation
\[
\left(v_{1, t}, \ldots, v_{k, t}\right):=\left(\left(\hat{v}_1+t \varphi\right)^{+},\left(\hat{v}_2-t \varphi\right)^{+}, \ldots,\left(\hat{v}_k-t \varphi\right)^{+}\right)
  \]
  where, for each $i$, $\hat v_i:=v_i-\sum_{j\neq i} v_j$.
  Observe that $v_{i,t}\in H^1_{0,V}(\Omega)\setminus \{0\}$, and that $v_{i,t}\cdot v_{j,t}\equiv 0$ whenever $j\neq i$. Moreover, $\sum_{j=1}^k v_{j,t}=\hat{v}_1+t\varphi$ a.e. in $\Omega$.   We have
  \begin{align}\label{eq:class_S_aux}
  0&\leq \energyrel{k}{p}\left(v_{1, t}, \ldots, v_{k, t}\right)-\energyrel{k}{p}(v_1,\ldots, v_k)\\
  &=\sum_{i=1}^k\left(\frac{\mathcal{R}_V(v_i)}{\optenergyrel{k}{p}(\Omega)}\right)^{p-1}\left(\mathcal{R}_V(v_{i,t})-\mathcal{R}_V(v_i)\right)+O(t^2).
  \end{align}
The main difficulty in this proof is that we can not perform a Taylor expansion of each term $\|\nabla v_{i,t}\|_2^2$ individually; however, we can expand its sum in $i$. Define
  \[
  \delta_{i,t}(\varphi):=\frac{1}{t}\int_\Omega \left (|\nabla v_{i,t}|^2-|\nabla v_i|^2+V(x)(v_{i,t}^2-v_i^2)\right),\qquad i=1,\ldots, k.
  \]
  We have
  \begin{align*}
  \mathcal{R}_V(v_{1,t})-\mathcal{R}_V(v_1)&=\int_\Omega (|\nabla v_{1,t}|^2+V(x)|v_{1,t}|^2)\left(\frac{1}{\|v_1\|_2^2}-\frac{2t}{\|v_1\|_2^4}\int_\Omega v_1\varphi+o(t)\right)-\mathcal{R}_V(v_1)\\
  &=\frac{t\delta_{1,t}(\varphi)}{a_1^2}-\frac{2t}{a_1^4}\int_\Omega (|\nabla v_{1,t}|^2+V(x)v_{1,t}^2)\int_\Omega v_1 \varphi+o(t)\\
  &=\frac{t\delta_{1,t}(\varphi)}{a_1^2}-\frac{2t}{a_1^2}R_V(v_1)\int_\Omega v_1\varphi-\frac{2t^2\delta_{1,t}(\varphi)}{a_1^4}\int_\Omega v_1\varphi + o(t);
  \end{align*}
  analogously, for $i\geq 2$,
  \[
  \mathcal{R}_V(v_{i,t})-\mathcal{R}_V(v_i) =\frac{t\delta_{i,t}(\varphi)}{a_i^2}+\frac{2t}{a_i^2}\mathcal{R}_V(v_i)\int_\Omega v_i\varphi+\frac{2t^2\delta_{i,t}(\varphi)}{a_i^4}\int_\Omega v_i\varphi + o(t).
  \]
  Going back to \eqref{eq:class_S_aux}, this yields
  \begin{align*}
  0\leq &t\optenergyrel{k}{\infty}(\Omega)^{p-1}\left(\sum_{i=1}^k \delta_{i,t}(\varphi)-2\mathcal{R}_V(u_1)\int_\Omega v_1\varphi + \sum_{i\geq 2} 2\mathcal{R}_V(u_i)\int_\Omega v_i \varphi \right)\\
  &+2t^2 \optenergyrel{k}{\infty}(\Omega)^{p-1}\left(\frac{\delta_{1,t}(\varphi)}{a_1^2}\int_\Omega u_1\varphi - \sum_{i\geq 2} \frac{\delta_{i,t}(\varphi)}{a_i^2}\int_\Omega u_i \varphi\right)+o(t)\\
  = &t\optenergyrel{k}{\infty}(\Omega)^{p-1}\left(\sum_{i=1}^k \delta_{i,t}(\varphi)-2\mathcal{R}_V(u_1)\int_\Omega v_1\varphi + \sum_{i\geq 2} 2\mathcal{R}_V(u_i)\int_\Omega v_i \varphi \right)+o(t).
  \end{align*}
  Combining this with
  \begin{align*}
  \sum_{i=1}^k \delta_{i,t}(\varphi)&=\frac{1}{t}\left(\int_\Omega (|\nabla (\hat v_1+t\varphi)|^2+V(x)(\hat v_1+t\varphi)^2)-\sum_{i=1}^k (|\nabla v_i|^2+V(x)v_i^2)\right)\\
  &=2\int_\Omega (\nabla \hat v_1\cdot\nabla \varphi + V(x)\hat v_1 \varphi)+o(t),
  \end{align*}
  we conclude that
  \[
  0\leq 2t \optenergyrel{k}{\infty}(\Omega)^{p-1} \left(\int_\Omega (\nabla \hat v_1\cdot\nabla \varphi + V(x)\hat v_1 \varphi)-\mathcal{R}_V(u_1)\int_\Omega v_1 \varphi+\sum_{i\geq 2} \mathcal{R}_V(u_i)\int_\Omega v_i \varphi \right)+o(t).
  \]
  By dividing by $t>0$ and letting $t\to 0$, we conclude the proof.
      \end{proof}

Given a bounded domain $A$, we consider the following set introduced in \cite{CTV_JFA03,CTV_Indiana05,CoTeVe05}:
\begin{multline*}
\mathcal{S}(A):=\left\{ (u_1,\ldots, u_k)\in \left(H^1(A)\right)^k:\ u_i\geq 0 \text{ in } A,\ u_i\cdot u_j=0 \text{ if } i\neq j \text{ and } -\Delta u_i\leq f_i(x,u_i), \right.\\
\left.-\Delta( u_i-\sum_{j\neq i} u_j)\geq f_i(x,u_i)-\sum_{j\neq i} f_j(x,u_j) \text{ in }A \text{ in the distributional sense}\right\},
\end{multline*}
where $f_i:\Omega \times [0,\infty)\rightarrow \R$ are $C^1$ functions with $f(x,s)=O(s)$ as $s\to 0^+$, uniformly in $x$. Recall the following result.

\begin{theorem}[{{\cite[Corollary 8.5]{TavaresTerracini}}}]\label{thm:ClassS}
Under the previous notation and assumptions, let $(u_1,\ldots, u_k)\in \mathcal{S}(A)$. Then each $u_i$ is locally Lipschitz continuous in $A$, and the conclusions of (2) Theorem \ref{thm:regularity} hold in $A$.
\end{theorem}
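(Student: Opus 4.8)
My plan is to invoke \cite[Corollary~8.5]{TavaresTerracini} directly, so the only real task is to observe that the present situation falls within its scope. The class $\mathcal S(A)$ above is literally the one treated there, with $C^1$ nonlinearities $f_i(x,s)$ vanishing to first order at $s=0$ uniformly in $x$; in the applications we have in mind the relevant choice is $f_i(x,s)=(\lambda_i-V(x))s$ with $V\in C^1(\Omega)$ and $\lambda_i=\mathcal R_V(v_i)\ge 0$ (as produced by Proposition~\ref{prop:diff-ineq-p}), which is admissible since $\mathcal S(A)$ permits a smooth, lower-order, $x$-dependent nonlinearity. For the reader's convenience I would also recall the architecture of the proof, which goes back to \cite{CTV_JFA03,CTV_Indiana05,CoTeVe05} for the Lipschitz bound and to \cite{CaffarelliLin,CaffarelliLin2,TavaresTerracini} for the stratification, in three steps.

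First, De Giorgi--Nash--Moser estimates applied separately to $-\Delta u_i\le f_i(x,u_i)$ (which gives a local upper bound on each $u_i$) and to $-\Delta\hat u_i\ge f_i(x,u_i)-\sum_{j\ne i}f_j(x,u_j)$, where $\hat u_i:=u_i-\sum_{j\ne i}u_j$, yield $u_i\in L^\infty_{\mathrm{loc}}(A)\cap C^{0,\alpha}_{\mathrm{loc}}(A)$ for every $\alpha\in(0,1)$; the lower-order and $x$-dependent terms are harmless because $f_i(x,s)=O(s)$. Second -- and this is the analytic heart -- one establishes an Almgren-type (equivalently, Alt--Caffarelli--Friedman-type) monotonicity formula for the $k$-tuple $(u_1,\dots,u_k)$: the normalized frequency function
\[
r\longmapsto \frac{r^{2-d}\int_{B_r(x_0)}\sum_{i=1}^k|\nabla u_i|^2}{r^{1-d}\int_{\partial B_r(x_0)}\sum_{i=1}^k u_i^2}
\]
is, up to a bounded multiplicative perturbation accounting for the lower-order $f_i$-terms, monotone nondecreasing in $r$, the favourable sign being forced by the disjointness of the supports together with the two differential inequalities. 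Monotonicity of the frequency, combined with a Morrey-type decay estimate for $\sum_i|\nabla u_i|^2$, gives a uniform local bound on $|\nabla u_i|$, hence local Lipschitz continuity of each $u_i$.

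Third, for the stratification (under $V\in C^1$) I would fix a point $x_0$ in the common nodal set $\Gamma$, rescale the $u_i$ around $x_0$ at scale $r$ with the $L^2(\partial B_1)$-norm normalized to $1$, and use the monotonicity formula to extract a nonzero blow-up limit $(\bar u_1,\dots,\bar u_k)$: a segregated harmonic configuration (each $\bar u_i$ harmonic and nonnegative on $\{\bar u_i>0\}$, the supports pairwise disjoint, each $\hat{\bar u}_i$ subharmonic), homogeneous of some degree $d(x_0)\ge 1$ equal to the limiting value of the frequency. If $d(x_0)=1$, the blow-up is, after relabelling, the two-phase linear state $(\alpha(x\cdot\nu)^+,\alpha(x\cdot\nu)^-,0,\dots,0)$, and an improvement-of-flatness / two-phase free-boundary argument shows that $\Gamma$ coincides near $x_0$ with a $C^{1,\alpha}$ hypersurface -- the regular part $\mathscr R$ -- the matching of the nonzero one-sided gradients in (a) being exactly the transmission condition across $\mathscr R$. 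The complement $\mathscr S=\{x_0\in\Gamma:\ d(x_0)>1\}$ is closed, a Federer-type dimension-reduction argument on the (homogeneous) blow-ups yields $\dim_{\mathcal H}\mathscr S\le d-2$, and homogeneity strictly greater than $1$ of every blow-up at a point of $\mathscr S$ forces $\nabla u_i(x_0)=0$ for all $i$, which is (b). Finally, in dimension $d=2$ the homogeneous segregated configurations are completely explicit (in polar coordinates each component is $\rho^{m/2}$ times a trigonometric function, with $m\ge 2$ an integer), and from this one reads off (c): $\Gamma$ is locally a finite union of $C^{1,\alpha}$ arcs emanating from isolated points with equal opening angles.

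The main obstacle is entirely confined to the second and third steps -- the monotonicity formula and the classification and dimension-reduction of blow-ups -- which constitute the bulk of \cite{CTV_JFA03,CTV_Indiana05,CoTeVe05,TavaresTerracini} and which I would not reproduce. The point worth stressing is that nothing genuinely new is needed beyond the classical bounded, potential-free setting: Theorem~\ref{thm:ClassS} is a purely local assertion on the bounded domain $A$, and the admissible class of nonlinearities in \cite{TavaresTerracini} already accommodates the smooth, first-order-vanishing, $x$-dependent perturbation $f_i(x,s)=(\lambda_i-V(x))s$ arising from Proposition~\ref{prop:diff-ineq-p}, so the cited result applies off the shelf.
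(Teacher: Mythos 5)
Your proposal matches the paper exactly: the paper also gives no proof of this statement, simply quoting \cite[Corollary~8.5]{TavaresTerracini} (noting that the Lipschitz part goes back to \cite{CTV_JFA03,CTV_Indiana05,CoTeVe05}) after checking that the nonlinearities $f_i(x,s)=(\mathcal R_V(u_i)-V(x))s$ fall into the admissible class for $\mathcal S(A)$. Your additional sketch of the underlying machinery (local boundedness, monotonicity formula, blow-up classification and dimension reduction) is a faithful outline of the cited works, though not something the paper reproduces either.
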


We recall that the part regarding Lipschitz continuity had already been proved in \cite{CTV_JFA03,CTV_Indiana05,CoTeVe05}.

 \begin{proposition}\label{prop:sclass-infty}
   Suppose that \eqref{eq:strict-p} holds for $p=\infty$ and, for $p$ large, let $(u_{1,p},\ldots, u_{k,p})$ be a minimizer for $\optenergyrel{k}{p}(\Omega)$, normalized in $L^2(\Omega)$.  For each $i=1,\ldots, k$, take
     \[
    a_{i,p}^2:=\frac{\mathcal{R}_V(u_i)^{p-1}}{\optenergyrel{k}{\infty} (\Omega)^{p-1}}\in \mathbb{R}\setminus \{0\}. 
     \]
     Then there exists $(\tilde u_1,\ldots, \tilde u_k)$, a minimizer for $\optenergyrel{k}{\infty}(\Omega)$, and $\tilde a_1,\ldots, \tilde a_k>0$  such that, up to a subsequence, as $p \to \infty$,
     \[
     u_{i,p}\to \tilde u_i \text{ strongly in } H^1_{0,V}(\Omega),\qquad a_{i,p}\to \tilde a_i,\qquad \text{ for $i=1,\ldots, k$}
     \]
     and, for $\tilde v_i=\tilde a_i \tilde u_i$, the following differential inequalities are satisfied in the distributional sense:
     \begin{enumerate}
     \item $-\Delta \tilde v_i + V(x) \tilde v_i \leq \mathcal{R}_V(\tilde v_i) \tilde v_i$;
     \item $-\Delta\left(\tilde v_i-\sum_{j \neq i} \tilde v_j\right) + V(x) \left(\tilde v_i-\sum_{j \neq i} \tilde v_j\right) \geq \mathcal{R}_V(\tilde v_i) \tilde v_i-\sum_{j \neq i} \mathcal{R}_V(\tilde v_j) \tilde v_j$.
     \end{enumerate}
      \end{proposition}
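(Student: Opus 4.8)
The strategy is to produce $(\tilde u_1,\ldots,\tilde u_k)$ as a limit along $p\to\infty$ of the minimizers of the $p$-problems and then to pass to the limit in the differential inequalities of Proposition~\ref{prop:diff-ineq-p}; write $L:=\optenergyrel{k}{\infty}(\Omega)$. Since \eqref{eq:strict-p} holds for $p=\infty$, Proposition~\ref{prop:p}(2) ensures it continues to hold on a neighborhood of $p=\infty$, so that by Theorem~\ref{thm:existence} the $L^2$-normalized minimizers $(u_{1,p},\ldots,u_{k,p})$ in the statement exist for $p$ large. Fix $p_n\to\infty$; by Proposition~\ref{prop:p}(3), after passing to a subsequence, $(u_{1,p_n},\ldots,u_{k,p_n})\to(\tilde u_1,\ldots,\tilde u_k)$ strongly in $H^1_{0,V}(\Omega)^k$, with $(\tilde u_1,\ldots,\tilde u_k)$ an $L^2$-normalized minimizer of $\optenergyrel{k}{\infty}(\Omega)$ having pairwise disjoint supports; in particular $\mathcal{R}_V(u_{i,p_n})\to\mathcal{R}_V(\tilde u_i)=:\tilde\lambda_i$ for each $i$. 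As every admissible $k$-tuple satisfies $\mathcal{R}_V(u_{i,p})\le\optenergyrel{k}{p}(\Omega)\le k^{1/p}L$ (the last bound by Proposition~\ref{prop:p}(1)), we have $0<a_{i,p}^2=(\mathcal{R}_V(u_{i,p})/L)^{p-1}\le k^{(p-1)/p}\le k$; passing to a further subsequence, $a_{i,p_n}\to\tilde a_i\in[0,\sqrt k]$ for each $i$, and hence $v_{i,p_n}:=a_{i,p_n}u_{i,p_n}\to\tilde v_i:=\tilde a_i\tilde u_i$ strongly in $H^1_{0,V}(\Omega)$. Passing to the limit in the inequalities (1)-(2) of Proposition~\ref{prop:diff-ineq-p}, written at each $p=p_n$, is then routine: strong $H^1_{0,V}$-convergence gives $-\Delta v_{i,p_n}\to-\Delta\tilde v_i$ distributionally, $V\in L^\infty_{\text{loc}}(\Omega)$ and $L^2_{\text{loc}}$-convergence handle the zeroth-order terms, and $\mathcal{R}_V(v_{i,p_n})=\mathcal{R}_V(u_{i,p_n})\to\tilde\lambda_i$ controls the right-hand sides, so (1)-(2) hold for $(\tilde v_1,\ldots,\tilde v_k)$ in the distributional sense.

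The hard part is to show $\tilde a_i>0$ for every $i$; otherwise one has obtained (1)-(2) only for a $k$-tuple $(\tilde v_1,\ldots,\tilde v_k)$ with possibly vanishing components, which does not yield the regularity of all cells $\tilde\omega_i:=\{\tilde u_i\neq0\}$ (recall $\tilde u_i\not\equiv0$, as $\|\tilde u_i\|_2=1$). Since $\tilde a_i^2=\lim_n(\mathcal{R}_V(u_{i,p_n})/L)^{p_n-1}$ and $\mathcal{R}_V(u_{i,p_n})\le\optenergyrel{k}{p_n}(\Omega)=L+O(1/p_n)$, positivity of all the $\tilde a_i$ will follow once we know that every $\optenergyrel{k}{p}(\Omega)$-minimizer satisfies $L-\mathcal{R}_V(u_{i,p})\le C/p$ for $i=1,\ldots,k$ and $p$ large — that is, that the $\optenergyrel{k}{p}(\Omega)$-minimizers become equipartitions as $p\to\infty$ at rate $O(1/p)$ (in particular $\tilde\lambda_i=L$, so $(\tilde u_1,\ldots,\tilde u_k)$ is itself an equipartition).

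I would prove this estimate by contradiction: if along a subsequence $L-\min_i\mathcal{R}_V(u_{i,p})$ decayed more slowly than $1/p$, then, using that $\energyrel{k}{p}$ is dominated by its largest entries — so that a definite decrease of the largest Rayleigh quotient beats a negligible increase of the smallest ones — one can exhibit a $k$-partition of $\Omega$ with strictly smaller $p$-energy than $\optenergyrel{k}{p}(\Omega)$, contradicting minimality; concretely one lowers the Rayleigh quotient of a cell realizing the maximum by enlarging it into uncovered territory, or, after detaching a small sub-piece from a cell realizing the minimum, into the space so vacated. This is the higher-dimensional counterpart of the equipartition argument for bounded domains in \cite{HHT09,CoTeVe05}, and the technical core is in the degenerate configurations, when the partition already exhausts $\Omega$ and the ``saturated'' cells cannot be enlarged at the expense of any suboptimal cell (so that they form, in effect, a relatively clopen sub-cluster of $\Omega$). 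Once $\tilde a_i>0$ is known, each $\tilde v_i=\tilde a_i\tilde u_i$ is a genuine positive multiple of $\tilde u_i$ with $\{\tilde v_i\neq0\}=\tilde\omega_i$, the inequalities (1)-(2) hold for the $\tilde v_i$ exactly as stated, and the proof is complete (whence also the $p=\infty$ part of Theorem~\ref{thm:regularity}, via Theorem~\ref{thm:ClassS}).
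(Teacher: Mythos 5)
Your extraction of the limit tuple, the bound $a_{i,p}\le\sqrt k$, and the passage to the limit in the inequalities of Proposition~\ref{prop:diff-ineq-p} all coincide with the paper's argument and are fine. The problem is the step you yourself identify as ``the hard part'': positivity of the $\tilde a_i$. You reduce it to the quantitative claim that every minimizer of $\optenergyrel{k}{p}(\Omega)$ satisfies $L-\mathcal{R}_V(u_{i,p})\le C/p$ for all $i$, and then only sketch a proof of that claim by a partition-deformation argument (enlarging a maximal cell into ``uncovered territory'', or into space vacated by a suboptimal cell), explicitly conceding that the ``technical core'' --- the degenerate configurations where the partition already exhausts $\Omega$ and the saturated cells cannot be enlarged --- is unresolved. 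That core is precisely the whole difficulty: a priori the minimizing supports may exhaust $\Omega$ up to a negligible set, the cells realizing the maximum need not be adjacent to any cell realizing the minimum, and making ``a definite decrease of the largest Rayleigh quotient beats a negligible increase of the smallest ones'' quantitative at the rate $O(1/p)$, uniformly in $p$, is not done. As it stands this is a genuine gap, not a complete proof.

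The paper avoids this entirely by a soft argument. First, it observes that
\begin{equation}
\sum_{i=1}^k \tilde a_i^2=\lim_{p\to\infty}\sum_{i=1}^k a_{i,p}^{2p/(p-1)}=\lim_{p\to\infty}\frac{\optenergyrel{k}{p}(\Omega)^p}{\optenergyrel{k}{\infty}(\Omega)^p}\ge 1,
\end{equation}
so the index set $\mathcal I=\{i:\tilde a_i\neq 0\}$ is nonempty. It then applies the regularity theorem (Theorem~\ref{thm:ClassS}, i.e.\ \cite[Corollary 8.5]{TavaresTerracini}) to the limit tuple $(\tilde v_1,\ldots,\tilde v_k)$ restricted to $\Omega\cap B_r$ --- the inequalities (1)--(2) already hold for this tuple even if some components vanish --- and the exhaustion property in that theorem forces $\overline{\Omega}=\bigcup_{i\in\mathcal I}\overline{\{\tilde v_i\neq 0\}}$. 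If some $\tilde a_1=0$, the nonempty open set $\{\tilde u_1\neq 0\}$ (recall $\|\tilde u_1\|_2=1$) is disjoint from every $\{\tilde v_i\neq 0\}$, $i\in\mathcal I$, by segregation, contradicting the exhaustion. In particular, equipartition of the limit ($\mathcal{R}_V(\tilde u_i)=L$ for all $i$, Proposition~\ref{prop:egalite}(1)) is deduced \emph{from} $\tilde a_i>0$ in the paper, whereas you are trying to derive $\tilde a_i>0$ from an a priori, quantified equipartition property of the $p$-minimizers; if you want to keep your route, you must actually prove the $O(1/p)$ estimate, and the natural way around it is the exhaustion argument just described.
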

\begin{proof}
From Theorem~\ref{thm:existence} and 
\txtr{Proposition \ref{prop:p-continuity}}, for large $p$ and up to a subsequence, there exists $U_{p}=(u_{1,p}, \ldots, u_{k,p})$, a minimizer for $\optenergyrel{k}{p}(\Omega)$  such that 
$U_p \to U_\infty$ converges \txtr{weakly} in $H^1_{0,V}$ to a minimizer of $\optenergyrel{k}{\infty}(\Omega)$. Moreover, 
\begin{gather*}
    a_{i,p} = \frac{\mathcal{R}_V(u_{i,p})^{\tfrac{p-1}{2}}}{{\optenergyrel{k}{\infty}(\Omega)}^{\tfrac{p-1}{2}}} \le \left (\frac{\optenergyrel{k}{p}(\Omega)}{\optenergyrel{k}{\infty}(\Omega)}\right )^{\tfrac{p-1}{2}} \le k^{\tfrac{p-1}{2p}}\le \sqrt{k}
\end{gather*}
so, up to a subsequence, $a_{i,p} \to \tilde a_i$ as $p \to \infty$. \txtr{Then either
\begin{gather*}
    \mathcal{R}_V(u_{i,p}) \to \optenergyrel{k}{\infty}(\Omega) ,
\end{gather*}
(in which case $ u_{i,p}\to u_i$ strongly in $H^1_{0,V}$, and so also $v_{i,p}:=a_{i,p}u_{i,p}\to \tilde v_i$)
or else $a_{i,p} \to 0$ as $p\to \infty$ (and so $v_{i,p}\to 0=\tilde v_i$).  In particular, passing to the limit as $p\to\infty$ in the differential inequalities in Proposition~\ref{prop:diff-ineq-p} implies that $\tilde v_i$ will satisfy (1) and (2) for the nonzero components
}

\medbreak

We have
\begin{gather*}
    \sum_{i=1}^k \tilde a_i^2=\lim_{p \to \infty} \sum_{i=1}^k a_{i,p}^{2p/(p-1)}=\lim_{p \to \infty} \frac{\optenergyrel{k}{p}(\Omega)^p}{\optenergyrel{k}{\infty}(\Omega)^p} \ge 1,
\end{gather*}
so not all  $\tilde a_i$ are zero. To conclude, we check that  $\tilde a_i\neq 0$ for every $i$. Given $r>0$, the $k$-tuple with components $\tilde v_i|_{\Omega\cap B_r} = \tilde a_i \tilde u_i|_{\Omega\cap B_r}$ belongs to the set $\mathcal{S}(\Omega \cap B_r)$ for $f_i(x,s):=(\mathcal{R}_V(\tilde u_i)-V(x))s$. Since $f_i$ is of class $C^1$ (by our assumption on $V$) and $|f_i(x,s)|=O(s)$ as $s\to 0^+$ uniformly for $x\in \Omega\cap B_r$, then we can directly apply Theorem \ref{thm:ClassS}. This yields $|\Omega\cap B_r\cap \{x:\ \tilde v_i=0 \ \forall i\}|=0$ for every $r>0$, and so
\begin{gather*}
    \overline{\Omega} = \bigcup_{i\in \mathcal{I}} \overline{\{ \tilde v_{i} >0 \}},
\end{gather*}
where $\mathcal{I}=\{i:\ \tilde v_{i}\neq 0\}\neq\emptyset$. Assuming now that $\tilde a_1=0$, then $1\notin \mathcal{I}$ and $\{\tilde u_1>0\}$ (which is nonempty) is disjoint from $\bigcup_{i\in \mathcal{I}} \overline{\{ \tilde v_{i} >0 \}}=\overline \Omega$, a contradiction.
\end{proof}

\begin{remark}\label{rmk:allainotzero}
The above proof yields the existence of \emph{at least one} strong, regular partition. To have regularity for \emph{all} solutions associated with $\optenergyrel{k}{\infty}$ one would need to use other methods, such as an adaptation of the one in \cite{HHT09}. Similarly, for $N=2$ one can use the methods therein to further reduce the assumptions on the potential and obtain regularity up to the boundary, assuming more regularity of the latter.
\end{remark}


\begin{proof}[Proof of Theorem \ref{thm:regularity}]
 Let  $(u_1,\ldots, u_k)$ be any minimizer of $\optenergyrel{k}{p}(\Omega)$ (if $p\in [1,\infty)$); or let it be the minimizer of $\optenergyrel{k}{\infty}(\Omega)$ constructed in Proposition \ref{prop:sclass-infty} (if $p=\infty$). By Proposition \ref{prop:diff-ineq-p} (for $p<\infty$) and Proposition \ref{prop:sclass-infty} (for $p=\infty$), such $k$-tuple belongs to $\mathcal{S}(\Omega\cap B_r)$ for every $r>0$, for $f_i(x,s)=(\mathcal{R}_V(u_i)-V(x))s$. The conclusion now follows from Theorem \ref{thm:ClassS}.
\end{proof}

\begin{proof}[Proof of Proposition \ref{prop:weak-and-strong}]
A direct consequence of Theorem~\ref{thm:regularity}-(1) is that \eqref{eq:weak-and-strong} holds for $1\le p \le \infty$  if the weak threshold condition \eqref{eq:strict-p} does, since then a minimizer of $\optenergyrel{k}{p}(\Omega)$ exists by Theorem~\ref{thm:secondmain}. Indeed, under \eqref{eq:strict-p} and by Theorem~\ref{thm:secondmain}, we have the existence of a minimizer $(u_1,\ldots, u_k)$ of $\optenergyrel{k}{p}(\Omega)$ which is Lipschitz continuous  by Theorem \ref{thm:regularity}-(1). Then  $(\{u_1\neq 0\},\ldots, \{u_k\neq 0\})\in \partitionset{k}$ and this, combined with \eqref{eq:inequality_relaxed}, yields that it is a minimizing partition for $\optenergy{k}{p} (\Omega)$, whence $\optenergyrel{k}{p}(\Omega)= \optenergy{k}{p}(\Omega)$ (and $\thresholdrel{k}{p}=\threshold{k}{p}$).

In general, note that we always have the inequalities
\begin{equation}\label{eq:prop-weak-and-strong-help-2}
\optenergyrel{m}{p}(\Omega) \le \optenergy{m}{p}(\Omega) \le \threshold{m}{p}(\Omega)
\end{equation}
for all $1 \leq m \leq k$, by definition and by Theorem~\ref{thm:firstmain}, respectively. We will show by induction on $m$ that, in fact, $\thresholdrel{k}{p}(\Omega)=\threshold{k}{p}(\Omega)$ and $\optenergyrel{k}{p}(\Omega) = \optenergy{k}{p}(\Omega)$.

To this end, note that when $m=1$, the inequality $\thresholdrel{1}{p}(\Omega)=\threshold{1}{p}(\Omega)$ is immediate from the definition. Hence, either $\optenergyrel{1}{p}(\Omega) < \thresholdrel{1}{p}(\Omega)$ and the equality $\optenergyrel{1}{p}(\Omega) = \optenergy{1}{p}(\Omega)$ follows from the above argument, or else $\optenergyrel{1}{p}(\Omega) = \thresholdrel{1}{p}(\Omega) = \threshold{1}{p}(\Omega)$, whence $\optenergyrel{1}{p}(\Omega) = \optenergy{1}{p}(\Omega)$ by \eqref{eq:prop-weak-and-strong-help-2}.

Now suppose that we have both equalities for some $m \geq 1$, we consider $m+1$. It follows from the definition of the respective thresholds and the assumption that $\optenergyrel{m}{p}(\Omega) = \optenergy{m}{p}(\Omega)$ that in turn $\thresholdrel{m+1}{p}(\Omega) = \threshold{m+1}{p}(\Omega)$. We may now repeat verbatim the dichotomy argument used when $m=1$ to conclude that indeed $\optenergyrel{m+1}{p}(\Omega) = \optenergy{m+1}{p}(\Omega)$. This completes the proof of \eqref{eq:weak-and-strong}; the statement after it is then an immediate consequence of Theorem~\ref{thm:secondmain}, \eqref{eq:weak-and-strong}, and Theorem~\ref{thm:regularity}. So it remains to show (1) to (3).

For item (1), observe that, for the $p=\infty$ case and if \eqref{eq:strict-p} is not satisfied, then Theorem~\ref{thm:firstmain} guarantees the existence of a partition $\partition$ with $\energy{k}{\infty}(\partition) = \optenergy{k}{\infty}(\Omega)=\infess(\Omega)$. This shows that $\optenergy{k}{\infty}(\Omega)$ is always achieved, regardless of whether it is strictly less than $\infess (\Omega)$ or not, and regardless of whether $\optenergyrel{k}{\infty}(\Omega)$ is attained or not. 

For item (2), we refer to Examples~
\ref{ex:nopotential} and ~\ref{ex:strip}, while for item (3) we refer to Example~\ref{ex:fortschrittchen}.
\end{proof}

\begin{proof}[Proof of Proposition~\ref{prop:egalite}]
Let $\tilde v_i = \tilde a_i \tilde u_i$ be the minimizer of $\optenergyrel{k}{\infty}(\Omega)$ considered in Proposition \ref{prop:sclass-infty}, with $\tilde u_i$ being a weak limit of an $L^2$-normalized sequence $u_{i,p}$ of minimizers for $\optenergyrel{k}{p}(\Omega)$, and 
\begin{gather*}
    0 \neq \tilde a_i^2 = \lim_{p \to \infty} \tilde a_{i,p}^2=\lim_{p \to \infty} \left (\frac{\mathcal{R}_V(u_{i,p})}{\optenergyrel{k}{\infty}(\Omega)}\right )^{p-1}.
\end{gather*}
Observe that 
\begin{equation}\label{eq:lim_p_prop1.9}
\lim_{p\to \infty} \frac{\mathcal{R}_V(u_{i,p})}{\optenergyrel{k}{\infty}(\Omega)}=\frac{\mathcal{R}_V(\tilde u_{i})}{\optenergyrel{k}{\infty}(\Omega)}\in (0,1].
\end{equation}
If $\mathcal{R}_V(\tilde u_{i})<\optenergyrel{k}{\infty}(\Omega)$, then the limit in \eqref{eq:lim_p_prop1.9} belongs to the (open) interval $(0,1)$, and so
\[
\lim_{p\to \infty} \tilde a_{i,p}^2=0,
\]
a contradiction.

For item (2), see Example~\ref{ex:liberte}.
\end{proof}

\txtr{
We are now also in a position to give the complete proof of Proposition~\ref{prop:p}.}
\begin{proof}[Proof of Proposition~\ref{prop:p}]
\txtr{Proposition~\ref{prop:p}(1) and (2) follow from Proposition~\ref{prop:p-continuity} since $\optenergyrel{k}{p}(\Omega) = \optenergy{k}{p}(\Omega)$ for $1\le p \le \infty$ due to Proposition~\ref{prop:weak-and-strong}. Thus, it only remains to show (3):} By Theorem~\ref{thm:existence}, for each $p \in I$ there exists a minimizing $k$-tuple $U_p$, with $L^2$-normalized components, and
\begin{equation}
\energyrel{k}{p}(U_p) = \optenergyrel{k}{p}(\Omega) \to \optenergyrel{k}{p_0}(\Omega)
\end{equation}
when $p \to p_0$. From this it follows that, whenever $p_n \to p_0$, the corresponding sequence of minimizers $(U_{p_n})$ is actually a minimizing sequence for $\optenergyrel{k}{p_0} (\Omega)$, that is, for $p=p_0$. \txtr{For $1\le p_0 < \infty$ by Theorem~\ref{thm:existence}, it admits a subsequence convergent in $H_{0,V}^1(\Omega)$ to a minimizer of $\optenergyrel{k}{p_0}(\Omega)$. By Proposition~\ref{prop:sclass-infty} we have $\mathcal R_V(u_{i,p_n}) \to \optenergyrel{k}{\infty}(\Omega) $  for $p_n \to \infty$ and then similarly by Theorem~\ref{thm:existence} we infer that there exists a convergent subsequence $H_{0,V}^1(\Omega)$ to a minimizer of $\optenergyrel{k}{\infty}(\Omega)$.}
\end{proof}

\begin{proof}[Proof of Proposition~\ref{prop:aok}]  Monotonicity in $k$ of $\optenergy{k}{p}(\Omega)$ is immediate since, given any $(k+1)$-partition, unifying any two cells will produce a $k$-partition whose energy cannot be larger. Similarly, the threshold value is non-decreasing (in $k$) for $1<p<\infty$, as it is the  composition of the increasing function $X\mapsto (X^p + \Sigma^p)^{1/p}$ with $\kappa\mapsto \optenergy{k}{p}(\Omega)$.

Assume now that $\optenergy{k}{p}(\Omega)<\threshold{k}{p}(\Omega)$, for some $1\leq p\leq \infty$ and $k\in \N$, and let us prove that
\begin{equation}\label{eq:mon_in_k_aux}
\optenergy{k}{p}(\Omega)<\optenergy{k+1}{p}(\Omega).
\end{equation}
We have two cases:

\noindent Case 1. $\optenergy{k+1}{p}(\Omega)=\threshold{k+1}{p}(\Omega)$. In this situation,
\[
\optenergy{k}{p}(\Omega)<\threshold{k}{p}(\Omega)\leq \threshold{k+1}{p}(\Omega)= \optenergy{k+1}{p}(\Omega).
\]

\noindent Case 2. $\optenergy{k+1}{p}(\Omega)<\threshold{k+1}{p}(\Omega)$. Then by Theorem~\ref{thm:firstmain} there exists a minimizer $\partition=(\Omega_1, \Omega_2, \ldots, \Omega_{k+1})\in \partitionset{k+1}$ attaining $\optenergy{k+1}{p}(\Omega)$. 

Let us consider at first $1\le p < \infty$. Then
\begin{displaymath}
    \optenergy{k+1}{p}(\Omega) = \energy{k+1}{p}(\Omega_1, \ldots, \Omega_{k+1}) > \energy{k}{p}(\Omega_1, \ldots, \Omega_k) \ge \optenergy{k}{p}(\Omega)
\end{displaymath}
and we have $\optenergy{k}{p}(\Omega) < \optenergy{k+1}{p}(\Omega)$. 

Finally, for $p<\infty$, assuming \eqref{eq:mon_in_k_aux} and using the fact that the map $X\mapsto (X^p + \Sigma^p)^{1/p}$ is strictly increasing,  also $\threshold{k}{p}(\Omega)<\threshold{k+1}{p}(\Omega)$.
\end{proof}

\section{Examples}
\label{sec:examples}

This section is devoted to the examples listed and described in Section~\ref{sec:description-examples}.

\begin{example}
\label{ex:compact-resolvent}
We start with two simple examples which show how our main results (Theorems~\ref{thm:secondmain} and~\ref{thm:regularity}, and Proposition~\ref{prop:weak-and-strong}) generalize previous results on the existence of spectral minimal partitions (see \cite{BNHe17,CoTeVe05}, etc.), namely to cases where the underlying operator still has compact resolvent even though the domain is unbounded.

\begin{enumerate}
\item If we take $\Omega \subset \R^d$ to be an unbounded domain of finite volume (for example, append a finite number of infinite cusps, each of finite volume, to a bounded domain), and take $V = 0$, then  the embedding $H^1_0(\Omega) \hookrightarrow  L^2(\Omega)$ is still compact (for a necessary and sufficient condition on $\Omega$ that ensures compactness, see \cite[Theorem 6.16]{Adams_book}), the Dirichlet Laplacian on $\Omega$ still has compact resolvent and, in particular, $\infess (\Omega) = \infty$. By Theorem~\ref{thm:secondmain}, Theorem \ref{thm:regularity} and Proposition~\ref{prop:weak-and-strong}, $\optenergy{k}{p}(\Omega) = \optenergyrel{k}{p}(\Omega)$ admits a minimizing $k$-partition (in both the strong and weak senses) for all $1 \leq p \leq \infty$ and all $k \geq 1$.

\item The same conclusions hold if we take $\Omega = \R^d$ and $V(x) = |x|_2^2$ (corresponding to the quantum harmonic oscillator) since in this case it is well known (see, e.g., \cite[Section~2]{Fre18}) that $-\Delta + V$ has compact resolvent and thus, also in this case, $\infess (\R^d) = \infty$. More generally, this is known to be true whenever $\Omega \subset \R^d$ is any open set and $V(x) \to \infty$ as $|x|\to \infty$.
\end{enumerate}
\end{example}

We next give another family of simple examples where the threshold condition \eqref{eq:strict-p} is indeed satisfied for $p < \infty$ (for $p=\infty$ it is much easier to find such examples).

\begin{example}
\label{ex:threshold-yes}
Fix $k \ge 1$ and $p \in [1,\infty]$.  We give an example of a potential $V \in L^\infty (\R^d)$ for which \eqref{eq:strict-p} holds but the operator does not have compact resolvent. 

To this end, let $\Omega \subset \R^d$ be any bounded domain, and let $V_\Omega : \Omega \to \R$  be any nonnegative potential in $L^\infty(\Omega)$. We define a family of potentials $V_c$, $c > 0$, by
\begin{displaymath}
    V_c(x) := \begin{cases} V_\Omega (x) \qquad &\text{if } x \in \Omega,\\
    c\qquad &\text{otherwise.} \end{cases}
\end{displaymath}
It is standard in this case, and can be seen directly via \eqref{eq:from-persson-with-love}, that $\infess (\R^d,V_c) = c$. (In this example we will use a slightly different notation for our spectral threshold and minimal energies as we will need to compare minimal energies of different potentials.)

We claim that \eqref{eq:strict-p} holds for $V=V_c$ if $c>0$ is large enough. To see this, first note that
\begin{displaymath}
    \optenergyrel{k}{p}(\R^d,V_c) \le \optenergyrel{k}{p}(\Omega,V_\Omega),
\end{displaymath}
as follows from monotonicity of $\infspec$ with respect to domain inclusion, for any fixed potential (in other words, any partition of $\Omega$ may be extended arbitrarily to a partition of $\R^d$ with the same or lower energy). Now since $\optenergyrel{k}{p}(\Omega,V_\Omega)$ is a fixed number, we can clearly choose $c>0$ large enough such that
\begin{displaymath}
    \optenergyrel{k}{p}(\R^d,V_c) \le \optenergyrel{k}{p}(\Omega,V_\Omega) < c = \infess (\R^d,V_c).
\end{displaymath}
If $p=\infty$, this immediately yields \eqref{eq:strict-p}. If $p \in [1,\infty)$, take $p$-th powers and add $\optenergyrel{k-1}{p}(\R^d,V_c)^p \ge 0$ to the right-hand side to obtain the conclusion.
\end{example}

We next give a trivial example which shows that $\optenergy{k}{p}$ can be attained where $\optenergyrel{k}{p}$ is not.

\begin{example}\label{ex:nopotential}
    We take $\Omega= \mathbb R^d$ and $V=0$, then
    \begin{displaymath}
        \lambda(\Omega) = \Sigma(\Omega) =0
    \end{displaymath}
    and we have $\optenergyrel{k}{p}(\R^d)= \optenergy{k}{p}(\R^d)=0$ for all $k \geq 1$ and all $p \in [1,\infty]$. By Remark~\ref{rem:bounds}-(2), there exist spectral minimal partitions attaining $\optenergy{k}{p}(\R^d)=0$ for any $k \geq 1$ and $p\in [1,\infty]$. However, for any admissible $k$-tuple of functions $U=(u_1, \ldots, u_k)$ we have $\energyrel{k}{p} >0$, since otherwise $u_1=u_2=\cdots = u_k= 0$. Hence, there do not exist any spectral minimizers of $\optenergyrel{k}{p}(\R^d)$. 
\end{example}

Note that, in the previous example, the cells of the minimizing partition will not necessarily be connected, since in the construction used in Remark~\ref{rem:bounds}-(2) (from the proof of \eqref{eq:optimal-inequality-infty}) each ``cell'' will be an infinite union of annuli of increasing width. Our next example sketches how one can construct a minimizing partition with \emph{connected} cells (concretely, for the case $p=\infty$ and in dimension $d=2$, although the same construction also works for any $1 \le p<\infty$) in the prototypical case of the Laplacian ($V=0$) in an infinite strip in $\R^2$. Note however that, as in the previous example, no ground states exist.

\begin{example}
\label{ex:strip}
Suppose $V=0$. If $\Omega = (1,\infty) \times (0,\pi) \subset \mathbb R^2 $ is an infinite strip of width $\pi$, then it is a standard result that $\sigma(\Omega) = \sigma_{{\rm ess}}(\Omega) = [1,\infty)$ (see also Example \ref{ex:durchbruechchen} below). Moreover, by domain monotonicity, $\lambda(\Omega_1) \ge 1$ for all $\Omega_1 \subset \Omega$; combining this with \eqref{eq:optimal-inequality-infty}, we have $\optenergy{k}{\infty}(\Omega) =1$ for all $k\ge 1$.

If $k=1$ we may simply take $\Omega = \Omega_1$, while for $k\ge 2$ we can construct a minimal partition by constructing a series of ``rooms'' (rectangles) of increasing length, joined by increasingly narrow passages (cf.\ Figure~\ref{fig:geschachtelte-reihenhaeuser}). For $k=2$ we can, for example, take
\begin{align*}
    \Omega_1 &= \operatorname{Int}\left (\bigcup_{j\in \mathbb N} \left([2^{2j-2}, 2^{2j-1}] \times [0, \pi - \tfrac{1}{j}]\right) \cup \left([2^{2j-1}, 2^{2j}] \times [0, \tfrac{1}{j}]\right) \right ) \\
    \Omega_2 &= \operatorname{Int}\left (\bigcup_{j\in \mathbb N} \left([2^{2j-2}, 2^{2j-1}] \times [\pi- \tfrac{1}{j}, \pi ]\right) \cup \left([2^{2j-1}, 2^{2j}] \times [\tfrac{1}{j}, \pi ]\right) \right ) ;
\end{align*}
then $\infspec(\Omega_1) = \infspec(\Omega_2) = 1$, so that $\partition_2 := (\Omega_1,\Omega_2)$ will be a spectral minimal partition for all $1\le p \le \infty$. (To see that $\infspec(\Omega_2) = 1$, say, denote by $R_j := [2^{2j-1},2^{2j}] \times [\frac{1}{j},\pi]$ the $j$-th ``room'' in $\Omega_2$; then for each $j \in \N$, by domain monotonicity $\infspec(\Omega_2) \leq \infspec (R_j) = (\frac{\pi}{\pi-\frac{1}{j}})^2 + (\frac{\pi}{2^{2j}-2^{2j-1}})^2 \to 1$ as $j \to \infty$). However, $\Omega_1$ and $\Omega_2$ do not admit ground states.
\begin{figure}[ht]
\centering
\hfill
\begin{minipage}{.45\textwidth}
\includegraphics[scale=0.25]{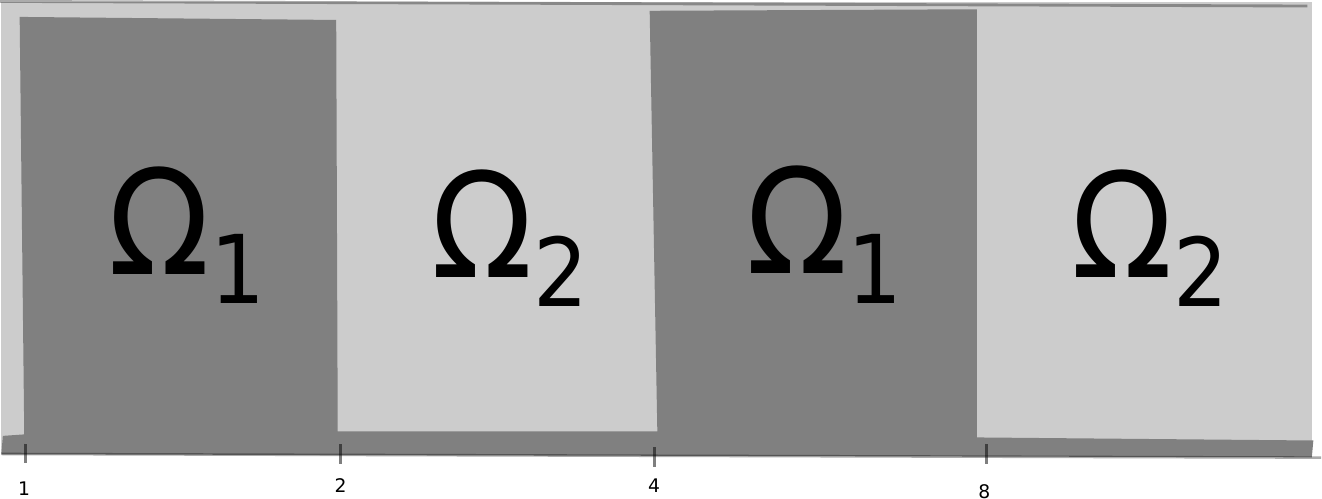}
\end{minipage} \hfill \begin{minipage}{.45\textwidth}
\includegraphics[scale=0.25]{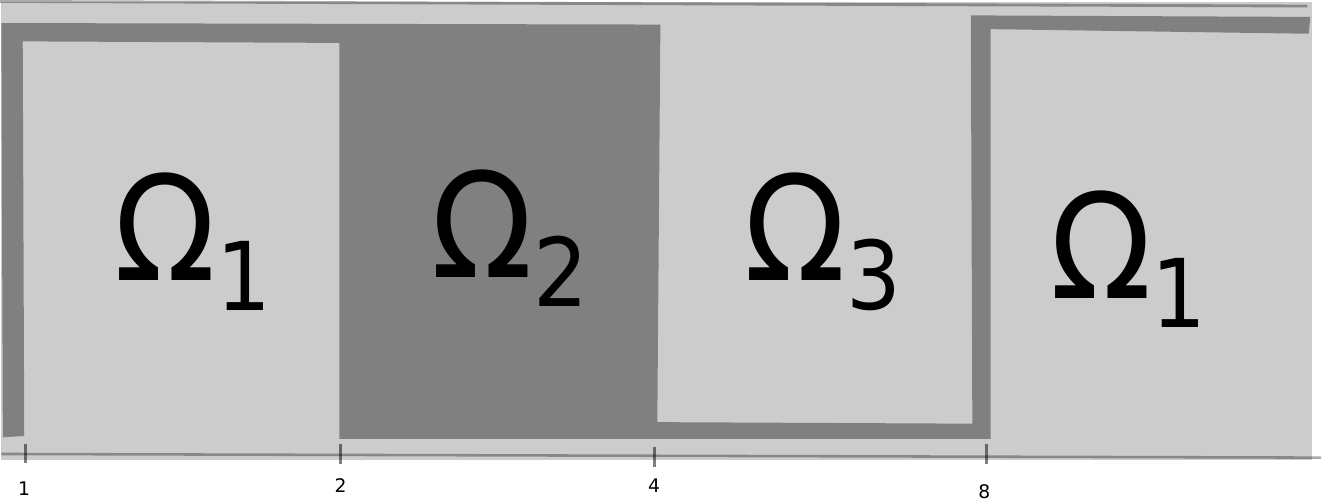}
\end{minipage}
\caption{A schematic representation of the construction of connected minimal partitions of the strip $(1,\infty) \times (0,\pi)$  for $k=2$ (left) and $k=3$ (right) (not to scale on the horizontal axis: the length of the ``rooms'' is chosen to increase towards infinity away from the left endpoint $x=1$). We introduce increasingly narrow passages to connect the rectangular regions (which will be of increasing length as $x$ increases). In order to treat the case $k\ge 4$ the domain $\Omega_2$ can be divided additionally (for $k=4$ indicated by the dotted line, note that of course the domains need to be rescaled).}
\label{fig:geschachtelte-reihenhaeuser}
\end{figure}

For any $k \geq 3$ it is still possible to find an optimal partition, that is, some $\partition_k = (\Omega_1,\ldots,\Omega_k)$ such that $\infspec (\Omega_i) = \infess (\Omega_i) = 1$ for all $i=1,\ldots,k$; but, again, in this case no $\Omega_i$ will have a ground state. To give an example to illustrate the principle, for simplicity we take $k=3$.

More precisely, for all $j \geq 1$ we consider the rectangle
\begin{displaymath}
    S_j :=
    \begin{cases}
    [2^{j-1},2^j-\tfrac{1}{3j}] \times [0,\pi - \tfrac{1}{j}]\qquad &\text{if } j \equiv 1 \mod 3,\\
    [2^{j-1}- \tfrac{1}{3j},2^j - \tfrac{2}{3j}] \times [\tfrac{1}{2j},\pi-\tfrac{1}{2j}]\qquad &\text{if } j \equiv 2 \mod 3 \\ 
    [2^{j-1}- \tfrac{2}{3j},2^j-\tfrac{1}{j}] \times [\tfrac{1}{j},\pi] \qquad &\text{if } j \equiv 0 \mod 3,
    \end{cases}
\end{displaymath}
(cf.\ Figure~\ref{fig:geschachtelte-reihenhaeuser}). We then form $\Omega_1$ by connecting all $S_j$ where $j=3\ell-2$ for some $\ell \geq 1$, that is, we take
\begin{displaymath}
    \Omega_1 = \operatorname{Int}\left (\bigcup_{\ell \in \N} S_{3\ell-2} \cup \bigcup_{\ell \in \N} [2^{3\ell-2}- \tfrac{1}{3j},2^{3\ell-1}- \tfrac{2}{3j}] \times [0,\tfrac{1}{j}] \cup \bigcup_{\ell \in \N} [2^{3\ell-1}- \tfrac{2}{3j},2^{3\ell}- \tfrac{1}{j}] \times [0,\tfrac{1}{j}] \right ), 
\end{displaymath}
and analogously for $\Omega_2$ (with $j=3\ell-1$) and $\Omega_3$ (with $j=3\ell$). By construction, the $\Omega_i$ are disjoint and, by domain monotonicity, as before,
\begin{displaymath}
    1 \leq \infspec (\Omega_1) \leq
    \infspec (S_j)=  \frac{\pi^2}{(\pi - \frac{1}{j})^2} + \frac{\pi^2}{(2^{j-1}- \tfrac{1}{3j})^2} \to 1
\end{displaymath}
as $j=3\ell - 2 \to\infty$, meaning that $\infspec (\Omega_1) = 1$ (similarly for $\Omega_2$ and $\Omega_3$).

It is immediate that an analogous construction would work for any $k \geq 4$. Note that the example becomes even easier if we do not insist that the $\Omega_i$ be \emph{connected} sets, since then we do not need to arrange the thin connecting ``passages'' running along the edge of the strip. Likewise, such an example can easily be extended to higher dimensions, see the next remark.
\end{example}

\begin{remark}
\label{rem:super-strip}
    More generally, let $\Omega \subset \mathbb R^d$ with $d\ge 3$ be an open (unbounded) domain, without loss of generality with $0\in \Omega$. We may also assume that $\Omega$ is equipped with a potential $V \in L^\infty_{\text{loc}} (\Omega)$.

    If we do not insist that the $\omega_i$ in a spectral minimal partition be \emph{connected} sets, then we can \emph{always} find a minimizing partition for $\optenergy{k}{\infty}(\Omega)$. Indeed, if $\optenergy{k}{\infty}(\Omega) < \infess(\Omega)$, then we already know that there exists a minimizer (each of whose cells is in fact connected), by Theorem~\ref{thm:secondmain}. Otherwise, if $\optenergy{k}{\infty}(\Omega) = \infess(\Omega)$, then we may take the $\omega_i$ to be the unions of concentric annuli used in the proof of Theorem~\ref{thm:firstmain}, see \eqref{eq:olympic-rings}: in this case, by construction, in the notation of that proof,
    \begin{displaymath}
        \infspec (\omega_i) \leq \lim_{j \to \infty} \infspec (\Omega \cap A_{r_{jk+i},R_{jk+i}}) = \infess (\Omega)
    \end{displaymath}
    for each $i=1,\ldots,k$, so that the corresponding partition has energy $\infess(\Omega)$.
    
    For $d\ge 3$, we can always make each $\omega_i$ connected using a version of the narrow passages used in the model Example~\ref{ex:strip}, namely using so called ``fireman's poles''. 
    Fixing any $i=1,\ldots,k$, We choose any smooth connecting paths $p_j^{(i)} \subset \Omega$, contained in $\overline{A_{R_{jk+i},r_{jk+i+1}}}$, which will connect $\Omega \cap A_{r_{jk+i},R_{jk+i}}$ with $\Omega \cap A_{jk+i+1},R_{jk+i+1}$. We then consider a sequence of radii $\rho_j^{(i)}$ such that for each $i$ the (Choquet) capacities (cf.\ \cite{RaTa75} or \cite[Chapter~2]{HKM06}) of the tubular neighborhoods $B_{\rho_j^{(i)}}(p_j^{(i)}) = \{ x \in \Omega: \dist(x,p_j^{(i)})<\rho_j^{(i)} \}$ are summable in $j$, i.e. 
    \begin{displaymath}
        \sum_{j\in \mathbb N} \operatorname{cap} (B_{\rho_j^{(i)}}( p_j^{(i)})) < \infty.
    \end{displaymath}
    We then define
    \begin{equation}
    F_{\rho^{(i)}}:= \bigcup_{j\in \mathbb N} B_{\rho_j^{(i)}}( p_j^{(i)}) = \bigcup_{n\in \mathbb N} \{ x\in \Omega : \dist(x, p_j^{(i)}) < \rho_j^{(i)}\}.
    \end{equation}
    Using a cutoff argument (which we omit) one can show
    \begin{align*}
        \lambda\left (\Omega_i\cup F_{\rho^{(i)}} \setminus \operatorname{clos} \left (\bigcup_{\hat i\neq i} F_{\rho^{(\hat i)}}\right )\right ) &\le  \lim_{N\to \infty} \lambda\left ( \Omega_i  \setminus \left ( \operatorname{clos} \left ( \bigcup_{\hat i \neq i} F_{\rho^{(i)}} \right )\cap (\Omega \setminus B_{r_N}(0)) \right ) \right ) \\
        &= \lim_{N\to \infty} \lambda( \Omega_i \cap B_{r_N}(0)), 
    \end{align*}
    since by construction
    \begin{equation}
        \lim_{N\to \infty} \operatorname{cap}\left (  \operatorname{clos} \left ( \bigcup_{\hat i \neq i} F_{\rho^{(i)}} \right )\cap (\Omega \setminus B_{r_N}(0)) \right ) =0
    \end{equation}
    (cf.\ the proof of \cite[Theorem~2.43]{HKM06}; here, for legibility, we have used $\operatorname{clos}$ for the closure of a set).
    In dimension $d=2$ such an argument cannot be used directly, since curves do not have capacity zero in $\R^2$. It seems likely that one could run the joining passages along the boundary of $\Omega$, as in Example~\ref{ex:strip}, but it would be subtle to show that such a construction is always possible; we do not go into further details.
\end{remark}

Even though in such cases we can find always a minimizing partition for $\optenergy{k}{\infty}(\Omega)$, we can use the principle explained in Remark~\ref{rmk:clrestimate} to construct an example of such a minimizing partition which is not an equipartition (see Proposition~\ref{prop:egalite}).

\begin{figure}[ht]
\centering
\includegraphics[scale=0.6]{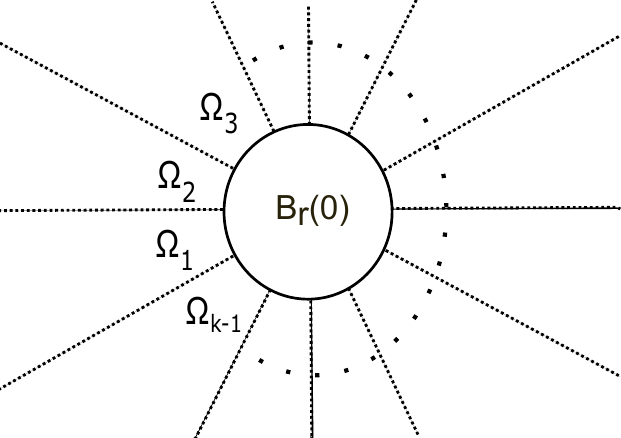}
\caption{An illustration of how for any given $k$, one can choose a $k$-partition consisting of $k-1$ wedges together with a central ball.}
\label{fig:watermelon_slices}
\end{figure}

\begin{example}
\label{ex:liberte}
For $d \geq 2$ and given constants $r,c>0$, we consider the potential $V_{r,c} : \R^d \to \R$ given by
\begin{displaymath}
    V_{r,c}(x) = \begin{cases}
    0 \qquad &\text{if $|x|_2 < r$,}\\ c \qquad &\text{otherwise.}
    \end{cases}
\end{displaymath}
In this case it is well known that $\infess = c$ (indeed, outside a compact set we have $-\Delta + c$ on $\R^d$). We fix $r>0$ and $c>0$ large enough that $\lambda_1(B_r(0),V) < c$. Now standard results yield that
\begin{displaymath}
    \infess (\R^d) = c,
\end{displaymath}
and $-\Delta + V_{r,c}$ has only finitely many eigenvalues below $c$ (indeed, the latter assertion follows from \eqref{eq:CLR}). Hence, by \eqref{eq:energy-eigenvalue}, there exists some $k \geq 2$ such that, for this potential $V$,
\begin{displaymath}
    \optenergy{k}{\infty} (\R^d) = \infess (\R^d) = c.
\end{displaymath}
If we take a partition $\partition = (\Omega_1,\ldots,\Omega_{k-1},B_r(0))$ where each $\Omega_i$ is the intersection of $\R^d \setminus B_r(0)$ with a suitable sector (if $d=2$ we may take a sector of angle $\frac{2\pi}{k-1}$, cf.\ Figure~\ref{fig:watermelon_slices}; if $d\geq 3$ this is easily generalized by taking wedges via sectors of a single angular coordinate), then in particular $V=c$ identically in $\Omega_i$.

In this case we can show by a direct comparison that $\infspec (\Omega_i) = c$. Indeed, on the one hand, it is immediate that $\infspec (\Omega_i) \geq c$, since $V=c$ on $\Omega_i$ and the Laplacian is positive; on the other, for any radius $R>0$, we can find a ball $B_R$ of radius $R$ contained in $\Omega_i$; thus, by the variational characterization, $\infspec (\Omega_i) \leq \lambda_1 (B_R)$. Since $\lambda_1 (B_R) \to c$ when $R \to \infty$, the claim follows.

We thus have $\infspec (\Omega_i) = c$ for all $i=1,\ldots,k-1$, while $\infspec (B_r(0)) = \lambda_1 (B_r(0)) < c$. Thus $\partition$ is not an equipartition, but $\energy{k}{\infty} (\partition) = c$, so it is a minimizing $k$-partition.
\end{example}

\begin{figure}[ht]
\centering 
\hfill 
\begin{minipage}{.45\textwidth}
\includegraphics[scale=0.5, trim= 0 0 1cm 0, clip]{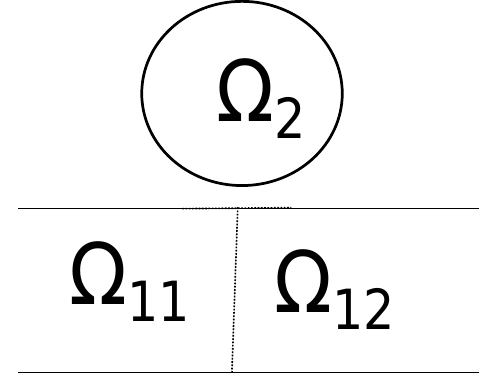}
\end{minipage} \begin{minipage}{.45\textwidth}
\includegraphics[scale=0.5, trim= 0 0 1cm 0, clip]{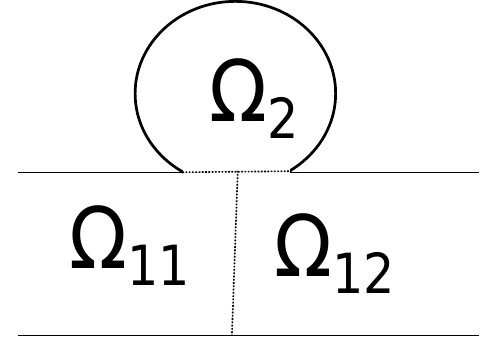}
\end{minipage}
\caption{A representation of the domains in Example~\ref{ex:strip-ball} and the corresponding minimizing three-partitions described there. These form equipartitions for which not all of the partition elements admit ground states.}
\label{fig:stripballtopconnected}
\end{figure}

\begin{example}
\label{ex:strip-ball} 
We give examples where there exists a minimizing partition for $\optenergy{k}{p}$ (concretely, with $k=3$ and any $p\in [1,\infty]$) which is an equipartition, but where not all of the partition elements admit a ground state.

Let $V\equiv 0$. We first consider disconnected domains, taking the following. Let $\Omega_1 := \R \times (0,\pi)^{d-1}$ be an infinite strip in $\R^d$, which has $\infspec (\Omega_1) = \infess (\Omega_1) = 1$ (similar to the strip in Example~\ref{ex:strip}), and let $\Omega_2$ be a disjoint ball in $\R^d$ such that $\infspec (\Omega_2) = 1$ as well. Then it is immediate  that $\infspec (\Omega) = \infess (\Omega) = 1$ for $\Omega := \Omega_1 \sqcup \Omega_2$, and $\optenergy{k}{p}(\Omega)=1$.

Now, for any $1 \leq p \leq \infty$, there is an optimal $3$-partition $\partition$ of $\Omega := \Omega_1 \sqcup \Omega_2$ of energy $1$, which is thus necessarily a minimizing partition since $\infspec(\Omega)=1$ also: take $\Omega_{11}$ and $\Omega_{12}$ to be two equal half-strips which partition $\Omega_1$, and take $\partition := \{ \Omega_{11}, \Omega_{12}, \Omega_2\}$. We see that $\Omega_2$ admits a ground state, but $\Omega_{11}$ and $\Omega_{12}$ do not.

If we do not permit disconnected domains, we can achieve the same result in essentially the same way, at least for $p=\infty$, although we will not go into full details. We glue $\Omega_1$ and $\Omega_2$ to form a connected domain $\Omega$ (see Figure~\ref{fig:stripballtopconnected}-right, and note that $\Omega_2$ has been deformed in such a way that it meets $\Omega_1$ along a surface and not just a point, but $\infspec(\Omega_2) = 1$ still; one may if one wishes take $\Omega_2$ to be a cube of unit side length instead). We claim that $\partition$ as described above is still a minimizing partition for $\optenergy{3}{\infty} (\Omega)$. Note that $\Omega$ will have exactly one eigenvalue below $\infess (\Omega)$, since $\infspec (\Omega) < \infspec (\Omega_2) = 1 = \infess (\Omega)$; however, one can show that $\Omega$ only has this one eigenvalue below $\infess (\Omega) = 1$. By \eqref{eq:counting-bounds}, necessarily $\optenergy{3}{\infty} (\Omega) = \optenergy{2}{\infty} (\Omega) = 1$.

Note that in both cases the minimizing $3$-partition is not unique, and that we can find other minimizing $3$-partitions for which \emph{no} partition element admits a ground state (just use the same idea as in Example~\ref{ex:strip}, excluding $\Omega_2$). Also note that both these examples can be easily generalized to $k$-partitions for $k \geq 4$. Finally, we observe that it should not be possible in this case to have a minimizing $3$-partition for $\optenergy{3}{p} (\Omega)$ for any $1 \leq p < \infty$, although we will not show that here; such an example will be given in Example~\ref{ex:fortschrittchen}.
\end{example}

\begin{remark} Take $\Omega$ to be the same as in the previous example; then, keeping the same notation, $\Omega_1$ (the strip) and $\Omega_2$ (the ball) are both minimizers for $\optenergy{1}{p}$. In this case, $\Omega_1$ does not have a ground state, while $\Omega_2$ does. This gives a trivial example of a domain with non-unique spectral minimal partitions for some $k$, where in one minimizer all cells admit a ground state, while in another no cell has a ground state.
\end{remark}

For our final example we start by finding a domain and a potential where we have a predetermined number of eigenvalues below $\infess$; this will be useful to construct partitions with certain properties afterwards. We thus split the example up into two parts: the construction in Example~\ref{ex:durchbruechchen}, and the description of the properties of such domains in Example~\ref{ex:fortschrittchen}.

\begin{example}
\label{ex:durchbruechchen}
Fix any $m \in \N$. We give an example of a domain $\Omega\subset \R^2$ and a potential $V=V(x,y)$ for which the corresponding Schrödinger operator on $\Omega$ has \emph{exactly} $m$ eigenvalues below $\infess$. We take an infinite half-strip $\Omega=(0, \infty) \times (0,\ell \pi) \subset \mathbb R^2$, where $\ell>0$, and write $(x,y) \in \R^2$. Consider the (one-dimensional) potential
\begin{displaymath}
    V_L(x) = \begin{cases}
    	c, &\quad x> L,\\
            0, &\quad x \le L.
    \end{cases}
\end{displaymath}
for $c>0$ and $L>0$. Then the Dirichlet eigenvalues of 
\begin{displaymath}
 - u'' + V_L(x) u = \lambda u \quad \text{ in } (0,\infty)
\end{displaymath}
are characterized as solutions of the transcendental equation
\begin{equation}\label{eq:transcendantal}
     \tan(\sqrt{\lambda} \, L) = - \frac{1}{\sqrt{c/\lambda-1}}, 
\end{equation}
where $\omega = \sqrt{\lambda} >0$. 

On the strip we separate variables: we have $L^2((0, \infty) \times (0, \ell\pi)) = \overline{L^2(0, \infty) \otimes L^2(0, \ell\pi)}$, and hence, writing $u(x,y) = u_1(x) u_2(y)$, since the potential $V(x,y)=V_L(x)$ only depends on $x$ we have
\begin{gather*}
-\Delta + V(x,y) |_{L^2(0,\ell \pi) \otimes L^2(0,\infty)} = \left (-\frac{\mathrm d^2}{\mathrm dx^2} + V_L(x) \right ) \otimes \mathbb 1 + \mathbb 1 \otimes \left ( - \frac{\mathrm d^2}{\mathrm dy^2}\right ).
\end{gather*}

In particular, the eigenvalues of $-\Delta + V_L$ are the sums of eigenvalues $\lambda_1, \lambda_2$ of the eigenvalue problems
\begin{displaymath}
 \begin{aligned}
    -u_1 ''(x) + V_L(x) &= \lambda_1 u_1(x) \\
    - u_2''(y)  &= \lambda_2 u_2(y).
 \end{aligned}
\end{displaymath}

Now if we choose $\frac{\pi^2}{4L^2} < c < \frac{\pi^2}{L^2},$ then there is exactly one solution $\lambda = \lambda_0 \in (\frac{\pi^2}{4L^2}, \frac{\pi^2}{L^2})$ of \eqref{eq:transcendantal}. Then for the point spectrum
\begin{displaymath}
\sigma_p(-\Delta + V_L) = \sigma_p\left (-\frac{\mathrm d^2}{\mathrm dx^2} + V_L\right ) + \sigma_p\left (-\frac{\mathrm d^2}{\mathrm dy^2} \right ) =\{ \lambda_0 + k^2\ell^{-2}: k\in \mathbb N \},
\end{displaymath}
while we have $\sigma_\text{ess}(-\Delta + V_L) = \sigma_{\text{ess}}(-\tfrac{\mathrm d^2}{\mathrm dx^2} + V_L) + \sigma(-\tfrac{\mathrm d^2}{\mathrm dy^2}) = [c + \ell^{-2},\infty)$, and in particular
$$\infess(\Omega) = c + \ell^{-2}.$$
In particular, given $m \in \N$, if we take $\ell > 0$ such that
\begin{displaymath}
     \frac{m^2-1}{\ell^2}< c - \lambda_0 < \frac{(m+1)^2-1}{\ell^2},
\end{displaymath}
then we will have
\begin{displaymath}
    \lambda_0 + \frac{m^2}{\ell^2} < c + \frac{1}{\ell^2} = \infess (\Omega) < \lambda_0 + \frac{(m+1)^2}{\ell^2},
\end{displaymath}
and there are exactly $m$ eigenvalues below the infimum of the essential spectrum.
\end{example}

\begin{figure}[ht]
\centering
\includegraphics[scale=0.4]{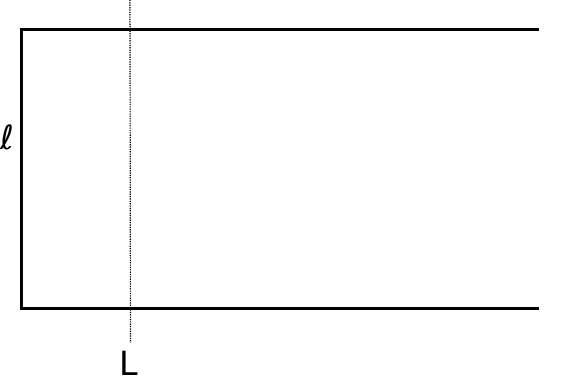}
\caption{The half-strip from Example~\ref{ex:fortschrittchen}. The potential is taken to be $0$ to the left of $L$, and $c$ to its right.}
\label{fig:half-strip}
\end{figure}

\begin{example}\label{ex:fortschrittchen}
Let $1\le p < \infty$. By Example~\ref{ex:durchbruechchen}, we can find an operator with precisely one eigenvalue below the infimum of the essential spectrum and an embedded (i.e. not isolated in $\sigma(\Omega)$) second eigenvalue equal to $\infess(\Omega)$. More precisely, keeping the notation from there, and setting $\tau_k (\Omega) := \lambda_0 + k^2 \ell^{-2}$, $k \in \N$, to be the (possibly embedded) eigenvalues, if we choose $\ell^2 \ge 3(c-\lambda_0)^{-1}$, then as seen in Remark~\ref{rmk:clrestimate} we have
\begin{displaymath}
\optenergy{2}{\infty} (\Omega) \geq \lambda_2(\Omega)= \infess(\Omega) 
\end{displaymath}
On the other hand with Theorem~\ref{thm:firstmain} we infer $\optenergy{2}{\infty}(\Omega) =  \Sigma(\Omega)$ and thus
\begin{displaymath}
 \optenergy{2}{\infty} (\Omega)= c+ \ell^{-2} \le  \lambda_0 + 4\ell^{-2} = \tau_2 (\Omega).
\end{displaymath}
Then for any $ 2$-partition $(\Omega_1, \Omega_2)$ we have
\begin{displaymath}
    \energy{2}{\infty}((\Omega_1, \Omega_2)) =\max\{ \infspec(\Omega_1), \infspec(\Omega_2)\} \ge \optenergy{2}{\infty}(\Omega) = \infess(\Omega) = c + \ell^{-2}. 
\end{displaymath}
In particular, after a renumbering if necessary, we may assume $\infspec(\Omega_1) \ge \infess(\Omega)$, and since $\Omega_2 \subsetneq \Omega$ and $\infspec(\Omega)$ is a discrete eigenvalue, we have strict domain monotonicity, $\infspec(\Omega_2) > \infspec(\Omega) = \optenergy{1}{p}(\Omega)$. Thus for any given $2$-partition $(\Omega_1,\Omega_2)$ of $\Omega$, we have
\begin{displaymath}
    \energy{2}{p}((\Omega_1, \Omega_2))^p = \infspec(\Omega_1)^p + \infspec(\Omega_2)^p > \infess(\Omega)^p + \optenergy{1}{p}(\Omega)^p.
\end{displaymath}
However, by Theorem~\ref{thm:firstmain}, we also have $\optenergy{2}{p}(\Omega)^p \le \infess(\Omega)^p + \optenergy{1}{p}(\Omega)^p$. Hence no spectral minimal partition exists in this case.

To summarize, in this example:
\begin{enumerate} 
    \item $\optenergy{2}{p}(\Omega)$ and $\optenergyrel{2}{p}(\Omega)$ never admit minimizers for $1 \leq p < \infty$, as noted above.
    \item $\optenergy{2}{\infty}(\Omega)= \optenergyrel{2}{\infty}(\Omega)$ is \emph{always} attained (see Proposition~\ref{prop:weak-and-strong} and Example~\ref{ex:strip}).
    \item By varying our choice of $\ell$, we can produce two cases: 
    \begin{itemize}
        \item $\tau_2(\Omega) = \lambda_0 + 4\ell^{-2} \leq \infess(\Omega)$, and $\optenergyrel{2}{\infty}(\Omega)$ is attained;
        \item $\tau_2(\Omega) > \infess(\Omega)$, and in particular this eigenvalue will be embedded and $\optenergyrel{2}{\infty}(\Omega)$ is not attained.
    \end{itemize} 
    
    To see this, first note that in either case any eigenfunction for $\tau_2$ will still have exactly two nodal domains, as follows directly from the product structure of the Laplacian (sketched in the previous example) and the fact that the ground state on $(0,\infty)$ is positive while, on $(0,\ell\pi)$, Sturm's oscillation theorem holds.
    
    Now, if $\tau_2 (\Omega) \leq \infess(\Omega)$, then the two nodal domains of the corresponding eigenfunction yield a test partition of energy smaller than or equal to $\infess(\Omega)$, so that either \eqref{eq:strict-p} holds for $p=\infty$ and a minimizer exists, or $\optenergyrel{2}{\infty}(\Omega) = \infess(\Omega)$ and thus the nodal partition for $\tau_2(\Omega)$ is optimal.
    
    On the other hand, if $\tau_2 (\Omega) > \infess(\Omega)$, then, for a proof by contradiction, suppose there exists a minimizing $2$-tuple $(u_1,u_2)$ for $\optenergyrel{2}{\infty}(\Omega)$. Then we can find a linear combination of $u_1$ and $u_2$ orthogonal to the first eigenfunction to use as a trial function in the Rayleigh--Ritz principle and hence show the existence of a second (variational) eigenvalue less than or equal to $\optenergyrel{2}{\infty}(\Omega)$. This is a contradiction to the assumption that $\tau_2(\Omega) > \infess (\Omega) = \optenergyrel{2}{\infty}(\Omega)$, and that the $\tau_k$ exhaust the point spectrum of $\Omega$.
\end{enumerate}
It should be possible to do the same for $k \geq 3$ in place of $k=2$ if $\Omega$ is adjusted to have exactly $k-1$ eigenvalues strictly less than $\infess(\Omega)$. We do not go into details.
\end{example}

\appendix

\bibliographystyle{plain}

\end{document}